\newtheorem{thm}{Theorem}[section]
\newtheorem{prop}[thm]{Proposition}
\newtheorem{defn}[thm]{Definition}
\newtheorem{rem}[thm]{Remark}
\theoremstyle{definition}
\numberwithin{equation}{section}
\renewcommand{\Re}{\hbox{Re}\,}
\renewcommand{\Im}{\hbox{Im}\,}
\newcommand{\C}{\mathbb{C}}
\renewcommand{\div}{\operatorname{div}}
\newcommand{\R}{\mathbb{R}}
\newcommand{\supp}{\operatorname{supp}}
\newcommand{\tr}{\operatorname{tr}}
\newcommand{\pM}{{\partial M}}
\def\hat{\widehat}
\def\tilde{\widetilde}
\def \bfo {\begin {eqnarray*} }
\def \efo {\end {eqnarray*} }
\def \ba {\begin {eqnarray*} }
\def \ea {\end {eqnarray*} }
\def \beq {\begin {eqnarray}}
\def \eeq {\end {eqnarray}}
\def \supp {\hbox{supp }}
\def \det {\hbox{det}}
\def \p {\partial}
\def\hat{\widehat}
\def\tilde{\widetilde}
\def \bfo {\begin {eqnarray*} }
\def \efo {\end {eqnarray*} }
\def \ba {\begin {eqnarray*} }
\def \ea {\end {eqnarray*} }
\def \beq {\begin {eqnarray}}
\def \eeq {\end {eqnarray}}
\def \supp {\hbox{supp }}
\def \det {\hbox{det}}
\def \p {\partial}
\begin{document}

\title[Inverse boundary problems for biharmonic operators ]{Inverse boundary problems for biharmonic operators in transversally anisotropic
geometries}

\author[Yan]{Lili Yan}

\address
        {Lili Yan, Department of Mathematics\\
         University of California, Irvine\\ 
         CA 92697-3875, USA }

\email{liliy6@uci.edu}

\maketitle

\begin{abstract} 
We study inverse boundary problems for first order perturbations of the biharmonic operator on  a conformally transversally anisotropic Riemannian manifold of dimension $n \ge 3$. We show that a continuous first order perturbation can be determined uniquely from the knowledge of the set of the Cauchy data on the boundary of the manifold provided that the geodesic $X$-ray transform on the transversal manifold is injective. 
\end{abstract}

\section{Introduction and statement of results}

Let $(M,g)$ be a smooth compact oriented Riemannian manifold of dimension $n\ge 3$ with smooth boundary $\p M$. Let  $-\Delta_g$ be the Laplace--Beltrami operator, and $(-\Delta_g)^2$ be the biharmonic operator on $M$. Let $X\in C(M,TM)$ be a complex vector field and let $q\in C(M,\C)$.  In this paper we shall be concerned with an inverse boundary problem for the first order perturbation of the biharmonic operator, 
\[
L_{X,q}=(-\Delta_g)^2+X+q.
\]
Let us now proceed to introduce some notation and state the main result of the paper. Let $u\in H^3(M^{\text{int}})$ be a solution to 
\begin{equation}
\label{eq_int_1}
L_{X,q}u=0\quad \text{in}\quad M.
\end{equation}
Here and in what follows $H^s(M^{\text{int}})$, $s\in\R$, is the standard Sobolev space on $M^{\text{int}}$, and $M^{\text{int}}=M\setminus\p M$ stands for the interior of $M$. Let $\nu$ be the unit outer normal to $\p M$. We shall define the trace of the normal derivative $\p_\nu(\Delta_g u)\in H^{-1/2}(\p M)$ as follows. Let $\varphi\in H^{1/2}(\p M)$. Then letting $v\in H^1(M^{\text{int}})$ be a continuous extension of $\varphi$, we set
\begin{equation}
\label{eq_int_2}
\langle \p_\nu (-\Delta_g u), \varphi \rangle_{H^{-1/2}(\p M)\times H^{1/2}(\p M)}=\int_M \big( \langle \nabla_g (-\Delta_g u), \nabla_g v\rangle_g +X(u)v+quv\big)dV_g,
\end{equation}
where $dV_g$ is the Riemannian volume element on $M$. As $u$ satisfies \eqref{eq_int_1}, the definition of the trace $\p_\nu(\Delta_g u)$ on $\p M$ is independent of the choice of an extension $v$ of $\varphi$. Associated to \eqref{eq_int_1}, we define the set of the Cauchy data, 
\begin{equation}
\label{eq_int_3}
\mathcal{C}_{X,q}=\{(u|_{\p M}, (\Delta_g u)|_{\p M}, \p_\nu u|_{\p M}, \p_\nu(\Delta_g u)|_{\p M}): u\in H^3(M^{\text{int}}), \, L_{X,q}u=0 \text{ in }M\}.
\end{equation}
Note that the first two elements in the set of the Cauchy data $\mathcal{C}_{X,q}$ correspond to the Navier boundary conditions for the first order perturbation of the biharmonic operator. Physically, such operators  arise when considering the equilibrium configuration of an elastic plate which is hinged along the boundary, see  \cite{Gazzola_2010}. One can also define the set of the Cauchy data  for the first order perturbation of the biharmonic operator, based on the Dirichlet boundary conditions $(u|_{\p M}, \p_\nu u|_{\pM})$, which corresponds to the clamped plate equation, 
\[
\tilde {\mathcal{C}}_{X,q}=\{(u|_{\p M}, \p_\nu u|_{\p M}, \p_\nu^2 u|_{\p M}, \p_\nu^3 u|_{\p M}): u\in H^3(M^{\text{int}}), \, L_{X,q}u=0 \text{ in }M\}.
\]
The explicit description for the Laplacian in the boundary normal coordinates shows that $\mathcal{C}_{X,q}=\tilde {\mathcal{C}}_{X,q}$, see \cite{Lee_Uhlmann}, \cite{Krup_Lassas_Uhlmann_func_anal}. 

The inverse boundary problem that we are interested in is to determine the vector field $X$ and the potential $q$ from the knowledge of the set of the Cauchy data $\mathcal{C}_{X,q}$. 

This problem was studied extensively in the Euclidean setting, see  \cite{Krup_Lassas_Uhlmann_trans},  \cite{Krup_Lassas_Uhlmann_func_anal},   \cite{Assylbekov_2016}, \cite{Assylbekov_Iyer}, \cite{Ikehata_1991} \cite{Isakov_1991} \cite{Bhattacharyya_Ghosh_2019}, \cite{Bhattacharyya_Ghosh_2020}, \cite{Ghosh_2015}, \cite{Ghosh_Krishnan_2015}, \cite{Yang_Yang_2014}. Specifically,    it was shown in  \cite{Krup_Lassas_Uhlmann_trans} that the set of the Cauchy data $\mathcal{C}_{X,q}$ determines the vector field $X$ and the potential $q$ uniquely. 
Let us note that the unique determination of a first order perturbation of the Laplacian is not possible due to the gauge invariance of boundary measurements and in this case the first order perturbation can be recovered only modulo a gauge transformation, see \cite{NakSunUlm_1995}, \cite{Sun_1993}.

Going beyond the Euclidean setting, inverse boundary problems for lower order perturbations of the Laplacian were only studied in the case when $(M,g)$ is CTA  (conformally transversally anisotropic, see Definition \ref{def_CTA} below) and under the assumption that the geodesic $X$-ray transform on the transversal manifold is injective, see  the fundamental works \cite{DKSaloU_2009} and \cite{DKuLS_2016} which initiated this study, and see also \cite{DKSU_2007}, \cite{DKSalo_2013},  \cite{KK_2018},  \cite{KK_advection_2018}, \cite{Cekic}.
\begin{defn}
\label{def_CTA}
A compact Riemannian manifold $(M,g)$ of dimension $n\ge 3$ with boundary $\p M$ is called conformally transversally anisotropic if $M\subset\subset \R\times M_0^{\text{int}}$  where $g= c(e \oplus g_0)$, $(\R,e)$ is the Euclidean real line, $(M_0,g_0)$ is a smooth compact $(n-1)$--dimensional manifold with smooth boundary, called the transversal manifold, and  and $c\in C^\infty(M)$ is a positive function. 
\end{defn}
The injectivity of the geodesic $X$-ray transform is known when the manifold $(M_0,g_0)$ is simple, in the sense that any two points in $M_0$ are connected by a unique geodesic depending smoothly on the endpoints and that $\p M_0$ is strictly convex, see \cite{Anikonov_78}, \cite{Muhometov}, when $M_0$ has strictly convex boundary and is foliated by strictly convex hypersurfaces \cite{Stefanov_Uhlmann_Vasy}, \cite{Uhlmann_Vasy_2016}, and also when $M_0$ has a hyperbolic trapped set and no conjugate points \cite{Guillarmou_2017}, \cite{Guill_Mazz_Tzou}. An example of the latter occurs when $M_0$ is negatively curved manifold.

Turning the attention to the inverse boundary problem of determining the first order perturbation of the biharmonic operator, this problem was solved in  \cite{Assylbekov_Yang_2017} in the case when $(M,g)$ is CTA and the transversal manifold $(M_0,g_0)$ is simple, extending the result of \cite{DKSaloU_2009} to the case of biharmonic operators. To be on par with the best results available for the perturbations of the Laplacian in the context of Riemannian manifolds, the goal of this paper is to solve the inverse problem for the first order perturbation of the biharmonic operator in the case when $(M,g)$ is CTA and the geodesic $X$-ray transform is injective on the transversal manifold $(M_0,g_0)$, generalizing the result of \cite{DKuLS_2016} to the case of biharmonic operators.

Let us recall some definitions related to geodesic $X$-ray transform following \cite{Guillarmou_2017}, \cite{DKSaloU_2009}. The geodesics on $M_0$ can be parametrized by points on the unit sphere bundle $SM_0=\{(x,\xi)\in TM_0: |\xi|=1\}$.  Let 
\[
\p_\pm SM_0=\{ (x,\xi)\in SM_0: x\in \p M_0, \pm \langle\xi,  \nu(x) \rangle>0\},
\] 
be the incoming ($-$) and outgoing ($+$) boundaries of $SM_0$. Here  $\nu$ is the unit outer normal vector field to $\p M_0$. Here and in what follows $\langle \cdot,\cdot\rangle$ is the duality between $T^*M_0$ and $TM_0$.

Let $(x,\xi)\in \p_-SM_0$ and $\gamma=\gamma_{x,\xi}(t)$ be the geodesic on $M_0$ such that $\gamma(0)=x$ and $\dot{\gamma}(0)=\xi$. Let us denote by $\tau(x,\xi)$ the first time when the geodesic  $\gamma$ exits $M_0$ with the convention that $\tau(x,\xi)=+\infty$ if the geodesic does not exit $M_0$. We define the incoming tail by
\[
\Gamma_-=\{(x,\xi)\in \p_-SM_0:\tau(x,\xi)=+\infty\}.
\]
When $f\in C(M_0,\C)$ and $\alpha\in C(M_0,T^*M_0)$ is a complex valued $1$-form, we define the geodesic $X$-ray transform on $(M_0,g_0)$  as follows
\[
I(f,\alpha)(x,\xi)=\int_0^{\tau(x,\xi)} \big[ f(\gamma_{x,\xi}(t))+ \langle \alpha(\gamma_{x,\xi}(t)), \dot{\gamma}_{x,\xi}(t) \rangle \big]dt, \quad (x,\xi)\in \p_-SM_0\setminus\Gamma_-.
\]

A unit speed geodesic segment $\gamma=\gamma_{x,\xi}:[0,\tau(x,\xi)]\to M_0$, $\tau(x,\xi)>0$, is called non-tangential if $\gamma(0),\gamma(\tau(x,\xi))\in \p M_0$, $\dot{\gamma}(0),\dot{\gamma}(\tau(x,\xi))$ are non-tangential vectors on $\p M_0$, and $\gamma(t)\in M_0^{\text{int}}$ for all $0<t<\tau(x,\xi)$.

\textbf{Assumption 1.} \textit{We assume that the geodesic $X$-ray transform on $(M_0,g_0)$ is injective in the sense that  if $I(f,\alpha)(x,\xi)=0$ for all $(x,\xi)\in \p_-SM_0\setminus\Gamma_-$  such that  $\gamma_{x,\xi}$ is a non-tangential geodesic then  $f=0$ and $\alpha=dp$ in $M_0$ for some $p\in C^1(M_0,\C)$ with $p|_{\p M_0}=0$}. 

The main result of the paper is as follows. 
\begin{thm}
\label{thm_main}
Let $(M,g)$ be a CTA manifold of dimension $n\ge 3$ such that Assumption 1 holds for the transversal manifold. Let $X^{(1)}, X^{(2)}\in C(M, TM)$ be complex vector fields, and let $q^{(1)},q^{(2)}\in C(M,\C)$. If $\mathcal{C}_{X^{(1)},q^{(1)}}=\mathcal{C}_{X^{(2)},q^{(2)}}$ then $X^{(1)}=X^{(2)}$ in $M$. Assuming furthermore that 
\begin{equation}
\label{eq_int_boundaty}
q^{(1)}|_{\p M}=q^{(2)}|_{\p M},
\end{equation}
we have $q^{(1)}=q^{(2)}$ in $M$.
\end{thm}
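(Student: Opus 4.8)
The plan is to follow the $X$-ray-transform method for inverse boundary problems on CTA manifolds — integral identity, complex geometrical optics (CGO) solutions built from Gaussian beams on the transversal manifold, reduction to the geodesic $X$-ray transform — adapted to the fourth-order operator $L_{X,q}$.

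\textbf{Step 1 (integral identity and conformal reduction).} Starting from $\mathcal{C}_{X^{(1)},q^{(1)}}=\mathcal{C}_{X^{(2)},q^{(2)}}$ and the fourfold Green's formula for the biharmonic operator, all four boundary traces appearing in \eqref{eq_int_3} being matched, I would obtain the identity: for every $u$ solving $L_{X^{(2)},q^{(2)}}u=0$ in $M$ and every $v$ solving the formal transpose equation of $L_{X^{(1)},q^{(1)}}$ in $M$,
\[
\int_M\big[\big((X^{(1)}-X^{(2)})u\big)\,v+(q^{(1)}-q^{(2)})\,u\,v\big]\,dV_g=0,
\]
the transpose understood weakly and the merely continuous regularity of the vector fields handled by approximation. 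Next I would use the CTA structure $g=c(e\oplus g_0)$: a conformal rescaling turns $L_{X^{(j)},q^{(j)}}$ into $c^{-2}\big((-\Delta_{\widehat g})^2+\widehat B_3+\widehat X^{(j)}+\widehat q^{(j)}\big)$ on $(\R\times M_0,\widehat g)$ with $\widehat g=e\oplus g_0$, where $\widehat B_3$ is a third-order operator whose coefficients depend only on the known function $c$ — hence it is the same for $j=1,2$ and cancels in the difference — while $\widehat X^{(j)},\widehat q^{(j)}$ are first- and zeroth-order and carry the same information as $X^{(j)},q^{(j)}$. So it suffices to work on the product cylinder.

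\textbf{Step 2 (CGO solutions — the main obstacle).} The limiting Carleman weight is $\varphi(x)=x_1$. I would first prove a semiclassical Carleman estimate for the conjugated operator with enough gain of derivatives to absorb the known term $\widehat B_3$ and the unknown first- and zeroth-order terms, and deduce the corresponding solvability statement. The characteristic set of the conjugated biharmonic symbol is the set of transversal unit covectors, on which the principal symbol vanishes to \emph{second} order; consequently the associated transport system has a Jordan-block structure and one must use a two-generation amplitude ansatz (with controlled polynomial-in-$x_1$ growth). Following the Gaussian-beam construction of \cite{DKuLS_2016}, for each non-tangential unit-speed geodesic $\gamma$ on $(M_0,g_0)$ and each frequency $\lambda\in\R$ I would build
\[
u=e^{-\frac1h(x_1+i\Phi(x'))}(a_\lambda+r_1),\qquad v=e^{\frac1h(x_1-i\Phi(x'))}(b_\lambda+r_2),
\]
with $\Phi$ a Gaussian-beam phase along $\gamma$ (so that $uv$ concentrates on $\{x_1\}\times\gamma$ up to an $x_1$-oscillatory factor governed by $\lambda$), $a_\lambda,b_\lambda$ the amplitudes solving the coupled transport equations (which involve $\widehat B_3$, $\widehat X^{(j)}$, $\widehat q^{(j)}$), $u$ solving $L_{X^{(2)},q^{(2)}}u=0$, $v$ solving the transpose equation of $L_{X^{(1)},q^{(1)}}$, and remainders $r_1,r_2$ small as $h\to0$ in a norm strong enough to render their contributions to the identity of Step 1 negligible. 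This is where the real work lies: the Carleman estimate with the correct gain, the transport equations for the degenerate symbol, and the verification that all remainder terms vanish in the limit.

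\textbf{Step 3 (recovery of $X$ and then $q$).} Insert the CGO solutions into the identity of Step 1 and let $h\to0$; expanding in powers of $h$ yields a finite hierarchy of geodesic $X$-ray transform identities on $M_0$, in which $\widehat B_3$ cancels. Writing $X^{(1)}-X^{(2)}=Y_1\,dx_1+Y'$ with $Y'$ a (possibly $x_1$-dependent) $1$-form on $M_0$ and Fourier-transforming in $x_1$ (dual variable $\mu$), the top-order identity reads
\[
I\big(\widehat{Y_1}(\mu,\cdot),\,i\,\widehat{Y'}(\mu,\cdot)\big)(\gamma)=0
\]
for all non-tangential $\gamma$; by Assumption~1, $\widehat{Y_1}(\mu,\cdot)\equiv0$ (so the $x_1$-components agree) and $\widehat{Y'}(\mu,\cdot)=dp_\mu$ with $p_\mu|_{\p M_0}=0$, the analogue of the gauge ambiguity of the magnetic Schrödinger problem. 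Since, unlike the magnetic Schrödinger operator, $L_{X,q}$ carries no gauge symmetry, the remaining identities of the hierarchy — which are present precisely because the fourth-order operator produces contributions invisible to the second-order problem — force the potential realizing this exact form to be harmonic, hence identically zero, giving $X^{(1)}=X^{(2)}$. With $X^{(1)}=X^{(2)}$ in hand, the next identity becomes $I\big(\widehat{q^{(1)}-q^{(2)}}(\mu,\cdot),0\big)(\gamma)=0$ for all non-tangential $\gamma$ and all $\mu$, the hypothesis $q^{(1)}|_{\p M}=q^{(2)}|_{\p M}$ being used to discard the boundary contribution in the relevant integration by parts; by Assumption~1 again, $\widehat{q^{(1)}-q^{(2)}}(\mu,\cdot)\equiv0$ for all $\mu$, whence $q^{(1)}=q^{(2)}$.
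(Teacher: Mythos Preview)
Your overall architecture---integral identity, Gaussian-beam CGO solutions via a Carleman estimate, reduction to the geodesic $X$-ray transform on $M_0$---matches the paper's, and you correctly identify that the fourth-order structure supplies extra information needed to kill the gauge. But two points diverge from the paper in ways that matter.

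\textbf{Conformal reduction.} The paper deliberately does \emph{not} reduce to the product metric $\widehat g=e\oplus g_0$; see Remark~1.6. Your rescaling produces a third-order operator $\widehat B_3$ depending on $c$. While $\widehat B_3$ indeed cancels in the integral identity, it does not cancel in the CGO construction: the conjugated term $e^{\varphi/h}h^4\widehat B_3e^{-\varphi/h}$ is only $\mathcal O(h)$, which the Carleman estimate (gaining $h^2$) cannot absorb as a perturbation, and the quasimode error it generates is likewise too large unless you build $\widehat B_3$ into the transport hierarchy. The paper sidesteps this entirely by constructing Gaussian beams for $e^{\pm sx_1}(-h^2\Delta_g)^2e^{\mp sx_1}$ directly, carrying the conformal factor $c$ through the eikonal and transport steps.

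\textbf{Killing the gauge.} This is the main gap. After the leading-order identity yields $X^{(1)}-X^{(2)}=\nabla_g\psi$ with $\psi|_{\partial M_0}=0$, you assert that further terms in an $h$-expansion force $\psi$ (or $p_\mu$) to be harmonic, hence zero. No such hierarchy is produced by Gaussian beams on a general CTA manifold in a way that gives this conclusion, and the paper does something quite different. The key observation is that for the biharmonic operator the amplitude transport equation is $L^2a=\mathcal O(|y|)$ rather than $La=\mathcal O(|y|)$; this admits a strictly larger solution set. Concretely, besides ``first-type'' amplitudes with $L_0a_0=0$ (i.e.\ $a_0=e^{-\phi}$), the paper constructs ``second-type'' amplitudes solving $L_0\big(c^{-1}L_0a_0\big)=0$ via the inhomogeneous $\overline\partial$-equation
\[
\tfrac{1}{c(x_1,t,0)}(\partial_{x_1}-i\partial_t)\big(e^{\phi}a_0\big)=1.
\]
Plugging CGOs with these second-type amplitudes into the identity and integrating by parts (using $\psi|_{\partial M_0}=0$ and compact support in $x_1$) yields precisely
\[
\int_{\R}\int_0^L e^{-2i\lambda(x_1-it)}\,\psi(x_1,\gamma(t))\,c(x_1,\gamma(t))\,dt\,dx_1=0,
\]
an attenuated ray transform of $\psi c$, whence $\psi=0$ by Assumption~1. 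This richer amplitude class---unavailable for the Laplacian---is the mechanism that eliminates the gauge; your ``harmonicity from the hierarchy'' does not substitute for it.

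Finally, a minor point: the hypothesis $q^{(1)}|_{\partial M}=q^{(2)}|_{\partial M}$ is used not to discard a boundary term in an integration by parts, but to extend $q^{(1)}-q^{(2)}$ by zero as a continuous function on $\R\times M_0$ so that Fubini and the limit computations of Proposition~\ref{prop_computations_gaussian_1} apply. Similarly, the paper needs $X^{(1)}|_{\partial M}=X^{(2)}|_{\partial M}$ (proved in Appendix~\ref{sec_boundary_rec}) to extend $X^{(1)}-X^{(2)}$ continuously by zero.
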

\begin{rem}
Examples of non-simple manifolds $M_0$ satisfying Assumption 1 include in particular manifolds  with  strictly convex boundary which are foliated by strictly convex hypersurfaces \cite{Stefanov_Uhlmann_Vasy}, \cite{Uhlmann_Vasy_2016}, and manifolds with a hyperbolic trapped set and no conjugate points \cite{Guillarmou_2017}, \cite{Guill_Mazz_Tzou}. 

\end{rem}

\begin{rem}
To the best of our knowledge, Theorem \ref{thm_main} seems to be the first result where one recovers a vector field uniquely on general CTA manifolds. 
\end{rem}
\begin{rem}
The assumption \eqref{eq_int_boundaty} is made for simplicity only and can be removed by performing the boundary determination as done in Appendix \ref{sec_boundary_rec} for the vector fields $X^{(1)}$ and $X^{(2)}$. This can be done by using the approach of \cite{Guill_Tzou_duke} combined with its extensions in \cite{KK_advection_2020} and \cite{FKOU_2020}. 
\end{rem}

Let us proceed to describe the main ideas in the proof of Theorem \ref{thm_main}. The key step in the proof is a construction of complex geometric optics solutions for the equations $L_{X,q}u=0$ and $L_{-\overline{X},-\div(\overline{X})+\overline{q}}u=0$ in $M$. Here the operator $L_{-\overline{X},-\div(\overline{X})+\overline{q}}$ represents the formal $L^2$ adjoint of the operator $L_{X,q}$.  In contrast to the work \cite{Assylbekov_Yang_2017}, where one deals with the same inverse problem in the case of a simple transversal manifold,  here without simplicity assumption, complex geometric optics solutions cannot be easily constructed by means of a global WKB method and following  \cite{DKuLS_2016}, we shall construct complex geometric optics solutions based on Gaussian beam quasimodes for the biharmonic operator $(-\Delta_g)^2$ conjugated by an exponential weight corresponding to the limiting Carleman weight $\phi(x)=\pm x_1$ for $-h^2\Delta_g$ on the CTA manifold $(M,g)$, see \cite{DKSaloU_2009}.  To convert the Gaussian beam quasimodes to exact solutions, we shall rely on the corresponding Carleman estimate with a gain of two derivatives established in \cite{KK_2018}, see also \cite{DKSaloU_2009}.  

\begin{rem}
We would like to note that one can obtain Gaussian beam quasimodes for the biharmonic operator $(-\Delta_g)^2$ conjugated by an exponential weight as the Gaussian beams quasimodes for the Laplacian conjugated by an exponential weight. However, such quasimodes are not enough to prove Theorem \ref{thm_main} as in order to recover the vector field uniquely, one has to exploit a richer set of amplitudes which are not available for the Gaussian beams quasimodes for the Laplacian. 
\end{rem}

\begin{rem}
When constructing  Gaussian beams quasimodes for the Laplacian conjugated by an exponential weight, one first reduces to the setting when the conformal factor $c=1$ by using the following transformation, 
\[
c^{\frac{n+2}{4}}\circ (-\Delta_g)\circ c^{-\frac{(n-2)}{4}} = -\Delta_{\tilde{g}} +\tilde{q}, 
\]
where
\[
\tilde g=e\oplus g_0,\quad \tilde q=-c^{\frac{n+2}{4}}(-\Delta_g) (c^{-\frac{(n-2)}{4}}),
\] 
see \cite{DKuLS_2016}. However, it seems that no such useful reduction is available for the biharmonic operator and therefore, when constructing  Gaussian beams quasimodes for the biharmonic operator $(-\Delta_g)^2$ conjugated by an exponential weight, we shall proceed directly accommodating the conformal factor in the construction which makes it somewhat more complicated. 
\end{rem}

Once complex geometric optics solutions are constructed, the next step is to substitute them into a suitable integral identity which is obtained as a consequence of the equality $\mathcal{C}_{X^{(1)},q^{(1)}}=\mathcal{C}_{X^{(2)},q^{(2)}}$ for the Cauchy data sets.  Exploiting the concentration properties of the corresponding Gaussian beams together with Assumption 1, we first show that there exists $\psi\in C^1(\R\times M_0)$ with compact support in $x_1$ such that $\psi(x_1,\cdot)|_{\p M_0}=0$ and $X^{(1)}-X^{(2)}=\nabla_g \psi$. To show that $\psi=0$, i.e. $X^{(1)}=X^{(2)}$, we use the concentration properties of the Gaussian beams for the biharmonic operator with a richer set of amplitudes which are not available for the Laplacian, combining with Assumption 1. Finally, we show that $q^{(1)}=q^{(2)}$ by using the concentration properties of the Gaussian beams together with Assumption 1 once again. 

The plan of the paper is as follows. In Section \ref{sec_Gaussian_beams} we construct Gaussian beam quasimodes for the biharmonic operator conjugated by an exponential weight corresponding to the limiting Carleman weight $\phi$ and establish some concentration properties of them. In Section \ref{sec_CGO} we convert the Gaussian beam quasimodes to the exact complex geometric optics solutions. Section \ref{sec_proof_of_main_result} is devoted to the proof of Theorem \ref{thm_main}. Finally, in Appendix \ref{sec_boundary_rec} the boundary determination of a continuous vector field on a compact manifold with boundary, from the set of the Cauchy data, is presented.

\section{Gaussian beams quasimodes for biharmonic operators on conformally anisotropic manifolds}
\label{sec_Gaussian_beams}

Let $(M,g)$ be  a conformally transversally anisotropic manifold so that $(M,g)\subset\subset (\R\times M_0^{\text{int}}, c(e\oplus g_0))$. Let us write $x=(x_1,x')$ for local coordinates in $\R\times M_0$. Note that $\phi(x)=\pm x_1$ is a limiting Carleman weight for $-h^2\Delta_g$, see \cite{DKSaloU_2009}.  

In this section we shall construct Gaussian beam quasimodes for the biharmonic operator $(-\Delta_g)^2$ conjugated by an exponential weight corresponding to the limiting Carleman weight $\phi$.  Thanks to the presence of the conformal factor $c$, our quasimodes will be constructed on the manifold $M$ and will be localized to non-tangential geodesics on the transversal manifold $M_0$. 

The first main result of this section is as follows. 
\begin{prop}
\label{prop_Gaussian_beams}
Let $s=\mu+i\lambda$ with $1\le \mu=1/h$ and $\lambda\in\R$ being fixed, and let  $\gamma:[0,L]\to M_0$ be a unit speed  non-tangential geodesic on $M_0$. Then there exist  families  of Gaussian beam quasimodes $v_s, w_s\in C^\infty(M)$ such that 
\begin{equation}
\label{eq_prop_gaussian_1}
\|v_s\|_{H^1_{\emph{\text{scl}}}(M^{\emph{\text{int}}})}=\mathcal{O}(1),\quad \| e^{sx_1}(-h^2\Delta_g)^2 e^{-sx_1}v_s\|_{L^2(M)}=\mathcal{O}(h^{5/2}),
\end{equation}
and
\begin{equation}
\label{eq_prop_gaussian_2}
\|w_s\|_{H^1_{\emph{\text{scl}}}(M^{\emph{\text{int}}})}=\mathcal{O}(1),\quad \| e^{-sx_1}(-h^2\Delta_g)^2 e^{sx_1}w_s\|_{L^2(M)}=\mathcal{O}(h^{5/2}),
\end{equation}
as $h\to 0$.  Moreover, in a sufficiently small neighborhood  $U$ of a point $p\in \gamma([0,L])$, the quasimode $v_s$ is a finite sum, 
\[
v_s|_{U}=v_s^{(1)}+\cdots+ v_s^{(P)},
\] 
where $t_1<\cdots <t_P$ are the times in $[0,L]$ where $\gamma(t_l)=p$. Each $v_s^{(l)}$ has the form
\begin{equation}
\label{eq_quasimode_v_0-form}
v^{(l)}_s=e^{is\varphi^{(l)}}a^{(l)}, \quad l=1,\dots, P,
\end{equation}
where $\varphi=\varphi^{(l)}\in C^\infty(\overline{U};\C)$ satisfies for $t$ close to $t_l$, 
\[
\varphi(\gamma(t))=t, \quad \nabla \varphi(\gamma(t))=\dot{\gamma}(t), \quad \emph{\text{Im}}\,(\nabla^2\varphi(\gamma(t)))\ge 0, \quad  \emph{\text{Im}}\,(\nabla^2\varphi)|_{\dot{\gamma}(t)^\perp}>0, 
\] 
and $a^{(l)}\in C^\infty(\R\times \overline{U})$ is of the form, 
\[
a^{(l)}(x_1,t,y)=h^{-\frac{(n-2)}{4}}a_0^{(l)}(x_1,t)\chi\bigg(\frac{y}{\delta'}\bigg),
\]
where for all $l=1,\dots, P$,  either
\begin{equation}
\label{eq_sec_2_ampl_a_1st}
a_0^{(l)}= e^{-\phi^{(l)}(x_1,t)}, 
\end{equation}
is an amplitude of the first type, or $a_0$ satisfies the equation 
\begin{equation}
\label{eq_sec_2_ampl_a_2st}
\frac{1}{c(x_1,t,0)} (\partial_{x_1}-i\partial_t) (e^{\phi^{(l)}(x_1,t)} a^{(l)}_0)=1,
\end{equation}
defining an amplitude of the second type. Here 
\begin{equation}
\label{eq_sec_2_phi}
\phi^{(l)}(x_1,t)=\log c(x_1,t,0)^{\frac{n}{4}-\frac{1}{2}}+G^{(l)}(t), \quad  \partial_t G^{(l)}(t)=\frac{1}{2}(\Delta_{g_0}\varphi^{(l)})(t,0), 
\end{equation}
$(t,y)$ are the Fermi coordinates for $\gamma$ for $t$ close to $t_l$, $\chi\in C^\infty_0(\R^{n-2})$ is such that $0\le \chi\le 1$,   $\chi=1$ for $|y|\le 1/4$ and $\chi=0$ for $|y|\ge 1/2$, and $\delta'>0$ is a fixed number that can be taken arbitrarily small. 

In a sufficiently small neighborhood  $U$ of a point $p\in \gamma([0,L])$, the quasimode $w_s$ is a finite sum, 
\[
w_s|_{U}=w_s^{(1)}+\cdots+ w_s^{(P)},
\] 
where $t_1<\cdots <t_P$ are the times in $[0,L]$ where $\gamma(t_l)=p$. Each $w_s^{(l)}$ has the form
\begin{equation}
\label{eq_quasimode_w_0-form}
w^{(l)}_s=e^{is\varphi^{(l)}}b^{(l)}, \quad l=1,\dots, P,
\end{equation}
where $\varphi^{(l)}$ is the same as in \eqref{eq_quasimode_v_0-form}, and $b^{(l)}\in C^\infty(\R\times \overline{U})$ is of the form
\[
b^{(l)}(x_1,t,y) = h^{-\frac{(n-2)}{4}}b_0^{(l)}(x_1,t)\chi \bigg( \frac{y}{\delta'} \bigg), 
\]
where 
\begin{equation}
\label{eq_sec_2_ampl_b_1st}
b_0^{(l)} = e^{-\tilde \phi^{(l)}(x_1,t)}. 
\end{equation}
Here
\begin{equation}
\label{eq_sec_2_tilde_phi}
\tilde \phi^{(l)}(x_1,t)=\log c(x_1,t,0)^{\frac{n}{4}-\frac{1}{2}}+F^{(l)}(t), \quad \partial_t F^{(l)}(t)=\frac{1}{2}(\Delta_{g_0}\varphi^{(l)})(t,0).
\end{equation}

\end{prop}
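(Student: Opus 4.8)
The plan is to reduce the construction to an analysis of the \emph{second-order} conjugated operator and to the effect of applying it twice. Since $(-\Delta_g)^2=(-\Delta_g)\circ(-\Delta_g)$, conjugation by the exponential weight distributes, so that
\[
e^{\pm sx_1}(-h^2\Delta_g)^2 e^{\mp sx_1}=\big(\mathcal{L}_s^{\pm}\big)^2,\qquad \mathcal{L}_s^{\pm}:=e^{\pm sx_1}(-h^2\Delta_g)e^{\mp sx_1}.
\]
It therefore suffices to understand $\mathcal{L}_s^{\pm}$ acting on a local ansatz $e^{is\varphi}a$ and to arrange that \emph{two} applications produce an $L^2(M)$ error of size $\mathcal{O}(h^{5/2})$. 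Using the warped product structure $g=c(e\oplus g_0)$ and that the phase $\varphi$ depends only on the transversal variables, a direct computation gives, with $hs=1+ih\lambda$,
\[
\mathcal{L}_s^{-}(e^{is\varphi}a)=e^{is\varphi}\Big[(hs)^2\,\big|\nabla_g(\varphi+ix_1)\big|_g^2\,a-ih(hs)\,\big(2\langle\nabla_g(\varphi+ix_1),\nabla_g a\rangle_g+\Delta_g(\varphi+ix_1)\,a\big)-h^2\Delta_g a\Big],
\]
and the same identity for $\mathcal{L}_s^{+}$ with $\varphi+ix_1$ replaced by $\varphi-ix_1$, which will produce $w_s$.

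For the phase I would take $\varphi=\varphi^{(l)}$ to be the standard Gaussian beam phase attached to the non-tangential geodesic $\gamma$ near the time $t_l$, obtained by solving the transversal eikonal equation $|\nabla_{g_0}\varphi|_{g_0}^2=1$ to second order along $\gamma$; this forces $\varphi(\gamma(t))=t$, $\nabla\varphi(\gamma(t))=\dot\gamma(t)$, and turns the complex Hessian into a matrix Riccati equation along $\gamma$, solvable with $\text{Im}\,(\nabla^2\varphi)\ge 0$ and positive definite on $\dot\gamma(t)^\perp$, exactly as in \cite{DKuLS_2016}, \cite{Guillarmou_2017}. Since $\varphi$, $\nabla\varphi$ and $\nabla^2\varphi$ are all pinned along $\gamma$, the leading symbol $|\nabla_g(\varphi\pm ix_1)|_g^2=c^{-1}(|\nabla_{g_0}\varphi|_{g_0}^2-1)$ vanishes to \emph{third} order in the transversal Fermi variable $y$, while $e^{is\varphi}$ concentrates, $|e^{is\varphi}|\le C e^{-c|y|^2/h}$.

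Next come the transport equations, and this is where the conformal factor must be carried along explicitly, there being no reduction to $c\equiv 1$ for the biharmonic operator. Restricting the first-order coefficient $2\langle\nabla_g(\varphi+ix_1),\nabla_g\cdot\rangle_g+\Delta_g(\varphi+ix_1)$ to $\gamma$ and rewriting it in the variables $(x_1,t)$ produces, up to the factor $2i/c(x_1,t,0)$, the operator $\partial_{x_1}-i\partial_t$ together with a zeroth-order term built from $\Delta_{g_0}\varphi$ and $c$; this zeroth-order term is removed exactly by the substitution encoded in $\phi^{(l)}$ of \eqref{eq_sec_2_phi}. Taking the remaining unknown to be constant gives the first-type amplitude \eqref{eq_sec_2_ampl_a_1st}, for which the transport term vanishes along $\gamma$; taking it to solve the inhomogeneous equation \eqref{eq_sec_2_ampl_a_2st} gives the second-type amplitude, for which the transport term equals $2i\,e^{-\phi^{(l)}}$ on $\gamma$, so that $\mathcal{L}_s^{-}v_s^{(\text{2nd})}=2h\,v_s^{(\text{1st})}+(\text{lower order})$ — this is the mechanism that makes the richer amplitude admissible once $\mathcal{L}_s^{-}$ is applied twice. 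Inserting the cutoff $\chi(y/\delta')$, whose $y$-derivatives contribute only $\mathcal{O}(e^{-c/h})$, and the normalisation $h^{-(n-2)/4}$ yields the quasimodes displayed in \eqref{eq_quasimode_v_0-form}; the construction of $w_s$ in \eqref{eq_quasimode_w_0-form} is identical after $x_1\mapsto -x_1$, which only flips a sign in the transport operator and leads to $\tilde\phi^{(l)}$ and $b_0^{(l)}=e^{-\tilde\phi^{(l)}}$ as in \eqref{eq_sec_2_tilde_phi}, \eqref{eq_sec_2_ampl_b_1st}; only first-type amplitudes are needed for $w_s$.

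Finally I would assemble the global quasimode by covering $\gamma([0,L])$ (extended slightly past its endpoints, possible since $\gamma$ is non-tangential) by finitely many Fermi coordinate charts, patching the local pieces with a partition of unity, adding the branches $v_s^{(1)}+\cdots+v_s^{(P)}$ at each self-intersection point, and restricting to $M$; one then estimates $(\mathcal{L}_s^{\pm})^2 v_s$ in $L^2(M)$. The bookkeeping rests on the fact that $\|\,|y|^k e^{-c|y|^2/h}\,\|_{L^2(\R^{n-2})}$ is of order $h^{(n-2)/4+k/2}$, together with $hs=\mathcal{O}(1)$: the first application of $\mathcal{L}_s^{-}$ turns $v_s$ into $e^{is\varphi}$ times the sum of a third-order-vanishing amplitude (from the eikonal symbol), an $h\,v_s^{(\text{1st})}$ term plus an $h$-times-linearly-vanishing amplitude (from the transport term), and an $\mathcal{O}(h^2)$ amplitude (from $-h^2\Delta_g$); applying $\mathcal{L}_s^{-}$ once more, the dominant contribution is $-h^2\Delta_g$ hitting the third-order-vanishing amplitude, of size $\mathcal{O}(h^2)\cdot\mathcal{O}(h^{1/2})=\mathcal{O}(h^{5/2})$, which yields \eqref{eq_prop_gaussian_1}–\eqref{eq_prop_gaussian_2}; the bound $\|v_s\|_{H^1_{\text{scl}}}=\mathcal{O}(1)$ follows from the same estimates, since $\|v_s\|_{L^2}=\mathcal{O}(1)$ (the normalisation offsets the Gaussian in the $n-2$ transversal directions) and a semiclassical derivative is $\mathcal{O}(1)$ along the beam and gains a factor $h^{1/2}$ transverse to it. I expect the main technical obstacle to be precisely this last step: tracking the conformal factor through the transport equations and carrying out the two-step remainder estimate so that the gain is the full $h^{5/2}$ — in particular it is essential that the eikonal residual vanishes to third, not merely second, order along $\gamma$, and that a second-type amplitude reproduces a multiple of a first-type quasimode under one application of $\mathcal{L}_s^{\pm}$.
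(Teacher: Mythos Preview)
Your approach is essentially the paper's: same Gaussian beam phase via the Riccati equation, same identification of the transport operator (with the conformal factor carried along), same two families of amplitudes, same $|y|^k e^{-c|y|^2/h}$ bookkeeping, and the same patching along $\gamma$. Two points deserve more care before this becomes a proof.

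First, your remainder accounting singles out ``$-h^2\Delta_g$ hitting the third-order-vanishing amplitude'' as the dominant $\mathcal{O}(h^{5/2})$ term, but in fact several cross terms land at exactly this order. Writing $\mathcal{L}_s^-=(hs)^2 f\cdot\; -\,ih(hs)L\;-\,h^2\Delta_g$ with $f=\mathcal{O}(|y|^3)$ and $L$ the transport operator, the square produces, besides your $h^2\Delta_g(fa)$, also $h^2 L^2 a$, $h(hs)^2 L(fa)$, and $h(hs)^2 fLa$, each of size $\mathcal{O}(h^{5/2})$. The term $h^2 L^2 a$ is the one that actually \emph{uses} your transport construction: it is $\mathcal{O}(h^{5/2})$ precisely because $L^2 a_0=\mathcal{O}(|y|)$, which for the second-type amplitude holds only through the mechanism you describe (one application of $L$ converts it to a first-type amplitude, which the next $L$ kills along $\gamma$). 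You should also check that $L(\mathcal{O}(|y|^3))=\mathcal{O}(|y|^3)$, which is needed for the $L(fa)$ term and holds because the leading transversal part of $L$ is $y\cdot\partial_y$.

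Second, and more substantively, ``patching the local pieces with a partition of unity'' is not enough for the second-type amplitude. For the phase and the first-type amplitude one propagates initial data for the Riccati and transport ODEs so that adjacent pieces \emph{agree} on overlaps, and then the partition of unity introduces no new errors. For the second-type amplitude, however, \eqref{eq_sec_2_ampl_a_2st} is an inhomogeneous $\bar\partial$-equation in $z=x_1-it$; local solutions on overlapping charts differ by holomorphic functions, and one must invoke a Cousin-I/Mittag--Leffler argument to modify them so that they match across overlaps. Without this step the glued amplitude is not smooth and the quasimode estimate fails. The paper handles this explicitly; you should too.
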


\begin{proof}
To construct Gaussian beam quasimodes, we shall follow the standard approach, see \cite{DKuLS_2016}, \cite{KK_2018}. The novelty here is that when working with the biharmonic operator we have to accommodate the presence of the conformal factor $c$ throughout the construction.  We are also led to consider a richer class of amplitudes for the Gaussian beams quasimodes. 

\textit{Step 1. Preparation.}

Let us isometrically embed the manifold $(M_0,g_0)$ into a larger closed manifold $(\hat{M_0},g_0)$ of the same dimension. This is possible as we can form the manifold $\hat{M_0}=M_0\sqcup_{\p M_0} M_0$, which is the disjoint union of two copies of $M_0$, glued along the boundary.  We extend $\gamma$ as a unit speed geodesic in $\hat{M_0}$. Let $\varepsilon>0$ be such that $\gamma(t)\in 
\hat{M_0}\setminus M_0$ and $\gamma(t)$ has no self-intersection for $t\in [-2\varepsilon, 0)\cup (L,L+2\varepsilon]$. This choice of $\varepsilon$ is possible since $\gamma$ is non-tangential. 

Our aim is to construct Gaussian beam quasimodes near $\gamma([-\varepsilon, L+\varepsilon])$. We shall start by carrying out the quasimode construction locally near a given point $p_0=\gamma (t_0)$ on  $\gamma([-\varepsilon, L+\varepsilon])$. Let $(t,y)\in U=\{ (t,y)\in \R\times \R^{n-2}: |t-t_0|<\delta, |y|<\delta'\}$, $\delta, \delta'>0$,  be Fermi coordinates near $p_0$, see \cite{Kenig_Salo_APDE_2013}. We may assume that the coordinates $(t, y)$ extends smoothly to a neighborhood of $\overline{U}$. 
The geodesic $\gamma$ near $p_0$ is then given by $\Gamma=\{(t,y): y=0\}$, and 
\[
g_0^{jk}(t,0)=\delta^{jk},\quad \p_{y_l} g_0^{jk}(t, 0)=0. 
\]
Hence, near the geodesic 
\begin{equation}
\label{eq_gauss_3_metric}
g_0^{jk}(t,y)=\delta^{jk}+\mathcal{O}(|y|^2). 
\end{equation}

Let us first construct the quasimode $v_s$ in \eqref{eq_prop_gaussian_1} for the operator $e^{sx_1} (-h^2 \Delta_g)^2 e^{-sx_1}$. In doing so, we consider the following Gaussian beam ansatz, 
\begin{equation}
\label{eq_gauss_4_v_s}
v_s(x_1, t,y)=e^{i s\varphi(t,y)} a(x_1, t,y; s).
\end{equation}
Here  $\varphi\in C^\infty (U,\C)$ is  such that 
\begin{equation}
\label{eq_gauss_4_v_s_phase}
\Im \varphi\ge 0,\quad \Im \varphi|_{\Gamma}=0,  \quad \Im \varphi(t,y)\sim |y|^2= \text{dist} ((y,t), \Gamma)^2,
\end{equation}
and $a \in C^\infty(\R\times U, \C)$ is an amplitute  such that $\text{supp}(a(x_1,\cdot))$ is close to  $\Gamma$, see \cite{Ralston_1982}, 
\cite{KKL_book}.  
Notice that here we choose  $\varphi$ to depend on the transversal variables $(t,y)$ only while $a$ is a function of all the variables. 

Let us first compute $e^{sx_1} (-h^2 \Delta_g)^2 e^{-sx_1} v_s$.
To that end, letting 
\begin{equation}
\label{eq_200_0}
\tilde \varphi(x_1,t, y)=x_1-i\varphi(t,y), \quad \hat \varphi=sh\tilde \varphi,
\end{equation}
we first get 
\begin{equation}
\label{eq_200_1}
e^{\frac{\hat\varphi}{h}}(-h^2\Delta_g)e^{-\frac{\hat\varphi}{h}}=-h^2\Delta_g +h(2 \langle \nabla_g \hat \varphi, \nabla_g \cdot \rangle_g +\Delta_g \hat \varphi)-\langle \nabla_g \hat \varphi, \nabla_g \hat \varphi\rangle_g.
\end{equation}
Here and in what follows we write $\langle\cdot , \cdot\rangle_g$ to denote the Riemannian scalar product on tangent and cotangent spaces. In view of \eqref{eq_200_1}, we see that 
\[
e^{s\tilde \varphi}(-h^2\Delta_g)^2e^{-s\tilde \varphi}=h^4\big(-\Delta_g +s(2 \langle \nabla_g \tilde \varphi, \nabla_g \cdot \rangle_g +\Delta_g \tilde \varphi)-s^2\langle \nabla_g \tilde \varphi, \nabla_g \tilde \varphi\rangle_g\big)^2,
\]
and therefore, 
\begin{equation}
\label{eq_200_2}
e^{sx_1} (-h^2 \Delta_g)^2 e^{-sx_1} v_s=e^{is\varphi}h^4\big(-\Delta_g +s(2 \langle \nabla_g \tilde \varphi, \nabla_g \cdot \rangle_g +\Delta_g \tilde \varphi)-s^2\langle \nabla_g \tilde \varphi, \nabla_g \tilde \varphi\rangle_g\big)^2a.
\end{equation}

\textit{Step 2. Solving an eikonal equation to determine the phase function $\varphi(t,y)$.}

Following the WKB method, we start by considering the eikonal equation 
\[
\langle \nabla_g \tilde \varphi, \nabla_g \tilde \varphi\rangle_g=0,
\]
and we would like to find $\varphi=\varphi(t,y)\in C^\infty(U,\C)$ such that 
\begin{equation}
\label{eq_200_3}
\langle \nabla_g \tilde \varphi, \nabla_g \tilde \varphi\rangle_g=\mathcal{O}(|y|^3), \quad y\to 0, 
\end{equation}
and 
\begin{equation}
\label{eq_200_3_2}
\text{Im}\varphi\ge d |y|^2,
\end{equation}
with some $d>0$.  Using that $g=c(e\otimes g_0)$ and \eqref{eq_200_0}, we see that 
\[
\langle \nabla_g \tilde \varphi, \nabla_g \tilde \varphi\rangle_g=c^{-1}(1-\langle \nabla_{g_0}  \varphi, \nabla_{g_0}  \varphi\rangle_{g_0}),
\]
and therefore, in view of \eqref{eq_200_3}, we have to find $\varphi$ satisfying the standard eikonal equation, 
\[
1-\langle \nabla_{g_0}  \varphi, \nabla_{g_0}  \varphi\rangle_{g_0}=\mathcal{O}(|y|^3), \quad y\to 0.
\]
As in  \cite{DKuLS_2016}, \cite{Ralston_1977} and \cite{Ralston_1982}, we can choose,  
\begin{equation}
\label{eq_gauss_9}
\varphi(t,y)=t+\frac{1}{2} H(t) y\cdot y,
\end{equation}
where $H(t)$ is a unique smooth complex symmetric solution of the initial value problem for the matrix Riccati equation, 
\begin{equation}
\label{eq_Riccati}
\dot{H}(t)+H(t)^2=F(t), \quad H(t_0)=H_0,
\end{equation}
with $H_0$ being a complex symmetric matrix with $\Im (H_0)$ positive definite and $F(t)$ being a suitable symmetric matrix. Hence, as explained in \cite{DKuLS_2016}, \cite{Ralston_1977} and \cite{Ralston_1982},  $\Im (H(t))$ is  positive definite for all $t$.

\textit{Step 3. Solving a  transport equation to find an amplitude $a$.}
We look for a smooth amplitude $a=a(x_1,x')$ satisfying the transport equation,
\begin{equation}
\label{eq_200_9}
L^2a=\mathcal{O}(|y|), 
\end{equation}
as $y\to 0$.  Here 
\begin{equation}
\label{eq_200_4_L}
L:=2 \langle \nabla_g \tilde \varphi, \nabla_g \cdot \rangle_g +\Delta_g \tilde \varphi.
\end{equation}
To proceed let us first simplify the operator $L$. To that end, in view of \eqref{eq_200_0}, a direct computation shows that 
\begin{equation}
\label{eq_200_5}
\langle \nabla_g \tilde \varphi, \nabla_g \cdot \rangle_g= \frac{1}{c}(\p_{x_1}-ig_0^{-1}(x')\varphi'_{x'}\cdot\p_{x'} ), 
\end{equation}
\begin{equation}
\label{eq_200_6}
\Delta_g \tilde \varphi=\Delta_g x_1-i \Delta_g \varphi(x'), 
\end{equation}
where 
\begin{equation}
\label{eq_200_7}
\Delta_g x_1 = \bigg(\frac{n}{2}-1\bigg) \frac{1}{c^2} \partial_{x_1}c,
\end{equation}
and 
\begin{equation}
\label{eq_200_8}
\Delta_g \varphi = \frac{1}{c} \Delta_{g_0} \varphi + \bigg(\frac{n}{2} -1\bigg) \frac{1}{c^2}  \langle \nabla_{g_0}c, \nabla_{g_0} \varphi \rangle_{g_0}.
\end{equation}
In view of  \eqref{eq_200_5},  \eqref{eq_200_6},  \eqref{eq_200_7},  \eqref{eq_200_8},  the operator $L$ given by \eqref{eq_200_4_L} becomes 
\begin{equation}
\label{eq_200_10}
L=\frac{2}{c}(\p_{x_1}-ig_0^{-1}(x')\varphi'_{x'}\cdot\p_{x'}) +\bigg(\frac{n}{2}-1\bigg) \frac{1}{c^2} \partial_{x_1}c-\frac{i}{c} \Delta_{g_0} \varphi - \bigg(\frac{n}{2} -1\bigg) \frac{i}{c^2}  \langle \nabla_{g_0}c, \nabla_{g_0} \varphi \rangle_{g_0}.
\end{equation}
Let us proceed to simplify the operator $L$ further. Using \eqref{eq_gauss_3_metric} and 
\eqref{eq_gauss_9}, we see that 
\begin{equation}
\label{eq_200_11}
g_0^{-1}(x')\varphi'_{x'}\cdot\p_{x'}=\p_t +\mathcal{O}(|y|^2)\p_t+H(t)y\cdot \p_y+\mathcal{O}(|y|^2)\cdot \p_y.
\end{equation}
Using \eqref{eq_gauss_3_metric} and \eqref{eq_gauss_9}, we also have
\begin{align*}
(\Delta_{g_0}\varphi)(t,0)&=|g_0|^{-1/2}\p_{x'_j}(|g_0|^{1/2} g_0^{jk}\p_{x'_k}\varphi)|_{y=0} =\delta^{jk}\p_{x'_j}\p_{x'_k}\varphi|_{y=0}\\
&=\delta^{jk}H_{jk}=\tr H(t),
\end{align*}
and therefore
\begin{equation}
\label{eq_200_11_2}
(\Delta_{g_0}\varphi)(t,y)= (\Delta_{g_0}\varphi)(t,0)+\mathcal{O}(|y|) = \tr H(t) + \mathcal{O}(|y|).
\end{equation}
Finally, using  \eqref{eq_gauss_3_metric} and \eqref{eq_gauss_9}, we get 
\begin{equation}
\label{eq_guassian_inner}
\begin{aligned}
\langle \nabla_{g_0}c, \nabla_{g_0} \varphi \rangle_{g_0} =  \partial_t c+\mathcal{O}(|y|).
\end{aligned}
\end{equation}

Using \eqref{eq_200_11}, \eqref{eq_200_11_2}, \eqref{eq_guassian_inner}, the operator $L$ in 
\eqref{eq_200_10} becomes
\begin{equation}
\begin{aligned}
\label{eq_gauss_14}
L = &\frac{2}{c}\bigg[  \partial_{x_1} - i \partial_t - iH(t)y\cdot\partial_y+ \bigg(\frac{n}{4}-\frac{1}{2}\bigg)(\partial_{x_1}-i\partial_t)\log c - \frac{i}{2}\tr H(t)\\
& +\mathcal{O}(|y|)+\mathcal{O}(|y|^2)\p_t +\mathcal{O}(|y|^2)\p_y \bigg]\\
&=\frac{2}{c(x_1,t,0)}\bigg[ \partial_{x_1} - i \partial_t  - iH(t)y\cdot\partial_y + (\partial_{x_1}-i\partial_t)\log c(x_1,t,0)^{\frac{n}{4}-\frac{1}{2}}\\
&  -\frac{i}{2}\tr H(t)+\mathcal{O}(|y|)+\mathcal{O}(|y|)(\partial_{x_1},\partial_t) + \mathcal{O}(|y|^2) \partial_y \bigg].
\end{aligned}
\end{equation}

Let $\chi\in C^\infty_0(\R^{n-2})$ be such that  $\chi=1$ for $|y|\le 1/4$ and $\chi=0$ for $|y|\ge 1/2$.  We look for the  amplitude $a$ in the form
\begin{equation}
\label{eq_gauss_10}
a(x_1,t,y)=h^{-\frac{(n-2)}{4}}a_0(x_1,t) \chi\bigg(\frac{y}{\delta'}\bigg),
\end{equation}
where $a_0(\cdot, \cdot) \in C^\infty(\R\times  \{t: |t-t_0|<\delta\} )$ is independent of $y$. In view of \eqref{eq_200_9}, $a_0$ should satisfy  the equation 
\begin{equation}
\label{eq_gauss_11}
L^2a_0 = \mathcal{O}(|y|),
\end{equation}
as $ y\to 0$. 
In view of \eqref{eq_gauss_14}, we write 
\begin{equation}
\label{eq_gauss_14_L_0}
L=\frac{2}{c(x_1,t,0)}(L_0+R),
\end{equation}
where 
\begin{equation}
\label{eq_gauss_17}
L_0 =  (\partial_{x_1} - i \partial_t )+(\partial_{x_1}-i\partial_t)\log c(x_1,t,0)^{\frac{n}{4}-\frac{1}{2}} -\frac{i}{2}\tr H(t),
\end{equation}
and 
\begin{equation}
\label{eq_gauss_17_R}
R=- iH(t)y\cdot\partial_y+\mathcal{O}(|y|)+\mathcal{O}(|y|)(\partial_{x_1},\partial_t) + \mathcal{O}(|y|^2) \partial_y. 
\end{equation}
To solve our inverse problem, we need two types of amplitudes. Let us proceed to construct the first type of amplitudes. In doing so, first note that as $a_0$ is independent of $y$, if $a_0$ solves the following equation 
\begin{equation}
\label{eq_gauss_16}
L_0a_0 = 0,
\end{equation}
then $a_0$ satisfies \eqref{eq_gauss_11}. Let us proceed to find a solution to \eqref{eq_gauss_16}. To that end, 
letting
\begin{equation}
\label{eq_200_12_0}
\phi(x_1,t)=\log c(x_1,t,0)^{\frac{n}{4}-\frac{1}{2}}+G(t), \quad  \partial_t G(t)=\frac{1}{2}\tr H(t), 
\end{equation}
we see that
\begin{equation}
\label{eq_200_12}
L_0 = e^{-\phi(x_1,t)}(\partial_{x_1}-i\partial_t)e^{\phi(x_1,t)}.
\end{equation}
We solve \eqref{eq_gauss_16} by taking 
\begin{equation}
\label{eq_guass_type1_amplitude}
a_0 = e^{-\phi} = c(x_1,t,0)^{\frac{1}{2}-\frac{n}{4}}e^{-G(t)},\quad \partial_t G(t)=\frac{1}{2}\tr H(t).
\end{equation}

Now we proceed to find the second type of amplitudes, which is given by more general solutions to \eqref{eq_gauss_11}. As $a_0$ is independent of $y$,  using \eqref{eq_gauss_14_L_0}, 
\eqref{eq_gauss_17},  \eqref{eq_gauss_17_R}, the equation 
\eqref{eq_gauss_11} becomes
\[
\frac{2}{c(x_1,t,0)}[L_0+R] \bigg(\frac{2}{c(x_1,t,0)}L_0 a_0(x_1,t)+\mathcal{O}(|y|)\bigg)=\mathcal{O}(|y|),
\]
or simply 
\begin{equation}
\label{eq_200_13}
L_0 \bigg(\frac{1}{c(x_1,t,0)}L_0\bigg) a_0(x_1,t)=0.
\end{equation}
Using \eqref{eq_200_12}, we see that \eqref{eq_200_13} becomes
\begin{equation}
\label{eq_200_14}
(\partial_{x_1}-i\partial_t)\bigg(\frac{1}{c(x_1,t,0)} (\partial_{x_1}-i\partial_t) (e^{\phi(x_1,t)} a_0)\bigg)=0.
\end{equation}
To solve \eqref{eq_200_14}, we choose $a_0(x_1,t)$ to be a solution  to 
\begin{equation}
\label{eq_200_15}
\frac{1}{c(x_1,t,0)} (\partial_{x_1}-i\partial_t) (e^{\phi(x_1,t)} a_0)=1.
\end{equation}
Note that the equation \eqref{eq_200_15} can be solved as it is a standard inhomogeneous $\overline{\p}$ equation in the complex plane $z=x_1-it$, 
\begin{equation}
\label{eq_200_15_2}
\overline{\p} (e^{\phi(x_1,t)}a_0) = c/2. 
\end{equation}

Let us remark that  the first type of the amplitudes, i.e.  solutions $a_0$ of the equation  \eqref{eq_gauss_16}, given by \eqref{eq_guass_type1_amplitude}, will be used to recover the potential $q$  as well as the vector field $X$ up to a suitable gauge transformation,  while to recover $X$ uniquely, we shall have to work with the second type of amplitudes, i.e. solutions $a_0$ of the equation \eqref{eq_200_13} given by \eqref{eq_200_15}.

\textit{Step 4. Establishing the estimates \eqref{eq_prop_gaussian_1} locally near the point $p_0$.}

First it follows from \eqref{eq_gauss_4_v_s} and \eqref{eq_gauss_10} that 
\begin{equation}
\label{eq_200_16}
v_s(x_1, t,y)=e^{i s\varphi(t,y)}h^{-\frac{(n-2)}{4}}a_0(x_1,t) \chi\bigg(\frac{y}{\delta'}\bigg).
\end{equation}
Using \eqref{eq_200_3_2}, we have 
\begin{equation}
\label{eq_200_16_1}
|v_s(x_1,t,y)|\le \mathcal{O}(1)  h^{-\frac{(n-2)}{4}}  e^{-\frac{1}{h} d|y|^2} \chi\bigg(\frac{y}{\delta'}\bigg), \quad (x_1,t,y)\in J\times U,
\end{equation}
and therefore, 
\begin{equation}
\label{eq_200_17}
\|v_s\|_{L^2(J\times U)}\le \mathcal{O}(1) \|  h^{-\frac{(n-2)}{4}} e^{-\frac{1}{h} d|y|^2}  \|_{L^2(|y|\le \delta'/2)}=\mathcal{O}(1),\quad h\to 0, 
\end{equation}
where $J \subset \R$ is a large fixed bounded open interval. Similarly, it follows from 
\eqref{eq_200_16} that 
\begin{equation}
\label{eq_200_18}
\|\nabla v_s\|_{L^2(J\times U)} = \mathcal{O}(h^{-1}).
\end{equation}
Let us next estimate $\| e^{sx_1}(-h^2\Delta_g)^2e^{-sx_1}v_s\|_{L^2(J\times U)}$.  To that end, letting 
\begin{equation}
\label{eq_200_18_1}
f=\langle \nabla_g \tilde \varphi, \nabla_g \tilde \varphi\rangle_g=\mathcal{O}(|y|^3), 
\end{equation}
 cf. \eqref{eq_200_3},  we obtain from \eqref{eq_200_2} with the help of \eqref{eq_200_4_L} that 
\begin{equation}
\label{eq_200_19}
\begin{aligned}
e^{sx_1}&(-h^2\Delta_g)^2e^{-sx_1}v_s= e^{is\varphi}h^4 \big( (-\Delta_g)^2a-s\Delta_g(La)+s^2\Delta_g(fa)\\
&+sL(-\Delta_g a)+s^2L^2a -s^3L(fa)
+s^2f(\Delta_ga)-s^3fLa +s^4f^2a
\big).
\end{aligned}
\end{equation}
We shall proceed to bound each term in \eqref{eq_200_19} in $L^2(J\times U)$. First using \eqref{eq_gauss_10} and \eqref{eq_200_3_2}, we get
\begin{equation}
\label{eq_200_20}
\begin{aligned}
\|e^{is\varphi}h^4  (-\Delta_g)^2a\|_{L^2(J\times U)}&=h^4\|e^{is\varphi}h^{-\frac{(n-2)}{4}} (-\Delta_g)^2(a_0\chi)\|_{L^2(J\times U)}\\
&=\mathcal{O}(h^4)\|h^{-\frac{(n-2)}{4}} e^{-\frac{d}{h}|y|^2}\|_{L^2(|y|\le\delta'/2 )}=\mathcal{O}(h^4),
\end{aligned}
\end{equation}
and similarly, 
\begin{equation}
\label{eq_200_21}
\|e^{is\varphi}h^4  s\Delta_g(La)\|_{L^2(J\times U)}=\mathcal{O}(h^3), 
\end{equation}
and 
\begin{equation}
\label{eq_200_22}
\|e^{is\varphi}h^4  sL (\Delta_g a)\|_{L^2(J\times U)}=\mathcal{O}(h^3). 
\end{equation}
Now to bound $e^{is\varphi}h^4s^2\Delta_g(fa)$ in $L^2(J\times U)$ we note that the worst case occurs when $\Delta_g$ falls on $f$, and in this case we have, using \eqref{eq_200_18_1} and \eqref{eq_gauss_10}, 
\[
\|e^{is\varphi}h^4  s^2\Delta_g(f)a\|_{L^2(J\times U)}\le \mathcal{O}(h^2)\|h^{-\frac{(n-2)}{4}} |y|e^{-\frac{d}{h}|y|^2}\|_{L^2(|y|\le\delta'/2 )}=\mathcal{O}(h^{5/2}),
\]
and therefore, 
\begin{equation}
\label{eq_200_23}
\|e^{is\varphi}h^4s^2\Delta_g(fa)\|_{L^2(J\times U)}=\mathcal{O}(h^{5/2}). 
\end{equation}
Here we have used the following bound
\begin{equation}
\label{eq_200_24}
\|h^{-\frac{(n-2)}{4}} |y|^ke^{-\frac{d}{h}|y|^2}\|_{L^2(|y|\le\delta'/2 )}=\mathcal{O}(h^{k/2}), \quad k=1,2,\dots. 
\end{equation}
Similarly, using \eqref{eq_gauss_11} and \eqref{eq_200_24}, we get 
\begin{equation}
\label{eq_200_25}
\|e^{is\varphi}h^4s^2L^2a\|_{L^2(J\times U)}\le \mathcal{O}(h^2)\|h^{-\frac{(n-2)}{4}} |y|e^{-\frac{d}{h}|y|^2}\|_{L^2(|y|\le\delta'/2 )}=\mathcal{O}(h^{5/2}). 
\end{equation}
Using \eqref{eq_200_18_1}, \eqref{eq_200_24}, and the fact that $L(\mathcal{O}(|y|^3))=\mathcal{O}(|y|^3)$, we obtain that 
\begin{equation}
\label{eq_200_26}
\begin{aligned}
&\|e^{is\varphi}h^4s^3 L(fa)\|_{L^2(J\times U)}\le \mathcal{O}(h)\|h^{-\frac{(n-2)}{4}} |y|^3e^{-\frac{d}{h}|y|^2}\|_{L^2(|y|\le\delta'/2 )}=\mathcal{O}(h^{5/2}),\\
&\|e^{is\varphi}h^4s^2 f(\Delta_g a)\|_{L^2(J\times U)}\le \mathcal{O}(h^2)\|h^{-\frac{(n-2)}{4}} |y|^3e^{-\frac{d}{h}|y|^2}\|_{L^2(|y|\le\delta'/2 )}=\mathcal{O}(h^{7/2}),\\
&\|e^{is\varphi}h^4s^3 fL a\|_{L^2(J\times U)}\le \mathcal{O}(h)\|h^{-\frac{(n-2)}{4}} |y|^3e^{-\frac{d}{h}|y|^2}\|_{L^2(|y|\le\delta'/2 )}=\mathcal{O}(h^{5/2}),\\
&\|e^{is\varphi}h^4s^4 f^2 a\|_{L^2(J\times U)}\le \mathcal{O}(1)\|h^{-\frac{(n-2)}{4}} |y|^6e^{-\frac{d}{h}|y|^2}\|_{L^2(|y|\le\delta'/2 )}=\mathcal{O}(h^{3}).
\end{aligned}
\end{equation}
Combining \eqref{eq_200_19}, \eqref{eq_200_20}, \eqref{eq_200_21}, \eqref{eq_200_22}, \eqref{eq_200_23}, \eqref{eq_200_25}, \eqref{eq_200_26}, we get 
\begin{equation}
\label{eq_200_27}
\| e^{sx_1}(-h^2\Delta_g)^2e^{-sx_1}v_s\|_{L^2(J\times U)}=\mathcal{O}(h^{5/2}).
\end{equation}
This completes verification of \eqref{eq_prop_gaussian_1} locally. 

For the later purposes we need estimates for $\|v_s(x_1,\cdot)\|_{L^2(\p M_0)}$.  If $U$ contains a boundary point $x_0=(t_0,0)\in \p M_0$, then $\p_t |_{x_0}$ is transversal to $\p M_0$. Let $\rho$ be a boundary defining function for $M_0$ so that $\p M_0$ is given by the zero set $\rho(t,y)=0$ near $x_0$. Then $\nabla \rho(x_0)$  is normal to $\p M_0$, and hence, $\p_t \rho(x_0)\ne 0$. By the implicit function theorem, there is a smooth function $y\mapsto t(y)$ near $0$ such that $\p M_0$ near $x_0$ is given by $\{ (t(y), y): |y|<r_0\}$ for some $r_0>0$ small,  see also \cite{Kenig_Salo_APDE_2013}. Then using \eqref{eq_200_16_1}, we get
\begin{equation}
\label{eq_v_s_on_boundary}
\begin{aligned}
\|v_s(x_1,\cdot)\|_{L^2(\p M_0\cap U)}^2&=\int_{|y|<r_0} | v_s(x_1,t(y),y)|^2 dS(y)\\
&\le \mathcal{O}(1) \int_{\R^{n-2}} h^{-\frac{(n-2)}{2}}e^{-2\frac{d}{h} |y|^2}dy=\mathcal{O}(1). 
\end{aligned}
\end{equation}

\textit{Step 5. Establishing estimates  \eqref{eq_prop_gaussian_1} globally.}

Now let us construct the quasimode $v_s$ in $M$ by gluing together quasimodes defined along small pieces of the geodesic.   As $\gamma:(-2\varepsilon, L+2\varepsilon)\to \hat M_0$ is a unit speed non-tangential geodesic, an application of  \cite[Lemma 7.2]{Kenig_Salo_APDE_2013}  shows that $\gamma|_{[-\varepsilon,L+\varepsilon]}$ self-intersects only at finitely many times $t_j$ with 
\[
0\le t_1<\dots <t_N\le L.
\]
We let  $t_0=-\varepsilon$ and $t_{N+1}=L+\varepsilon$.  By \cite[Lemma 3.5]{DKuLS_2016},  there exists an open cover $\{(U_j,\kappa_j)\}_{j=0}^{N+1}$ of $\gamma([-\varepsilon,L+\varepsilon])$ consisting of coordinate neighborhoods having the following properties: 
\begin{itemize}
\item[(i)] $\kappa_j(U_j)=I_j\times B$, where $I_j$ are open intervals and $B=B(0,\delta')$ is an open ball in $\R^{n-2}$.  Here $\delta'>0$  can be taken arbitrarily small and the same for each $U_j$, 
\item[(ii)] $\kappa_j(\gamma(t))=(t,0)$ for each $t\in I_j$,
\item[(iii)] $t_j$ only belongs to $I_j$ and $\overline{I_j}\cap \overline{I_k}=\emptyset$ unless $|j-k|\le 1$,
\item[(iv)] $\kappa_j=\kappa_k$ on $\kappa_j^{-1}((I_j\cap I_k)\times B)$.
\end{itemize}
To construct the quasimode $v_s$ globally, we first find a function  $v_s^{(0)}=e^{is\varphi^{(0)}}a^{(0)}$, $a^{(0)}=h^{-\frac{(n-2)}{4}}a_0^{(0)}\chi$, in $U_0$ as above. Choose some $t_0'$ with $\gamma(t_0')\in U_0\cap U_1$. To construct the phase $\varphi^{(1)}$ in $U_1$, we solve the Riccati equation \eqref{eq_Riccati} with the initial condition $H^{(1)}(t_0')=H^{(0)}(t_0')$. Continuing in this way, we obtain the phases $\varphi^{(0)}, \varphi^{(1)}, \dots, \varphi^{(N+1)}$ such that 
$\varphi^{(j)}=\varphi^{(j+1)}$ on $U_j\cap U_{j+1}$. In a similar way, by solving ODE in \eqref{eq_200_12_0} with prescribed initial conditions we get $\phi^{(0)},\dots, \phi^{(N+1)}$, and therefore, in view of \eqref{eq_guass_type1_amplitude} 
we obtain $a_0^{(0)}, a_0^{(1)}, \dots, a_0^{(N+1)}$, and hence, we construct the amplitude of the first type globally. 

To construct the amplitude of the second type, we need to solve the inhomogeneous  $\bar\p$--type equations 
\eqref{eq_200_15_2}.  To that end,  we first find $a_0^{(0)}$ and $a_0^{(1)}$ which are solutions of 
 \eqref{eq_200_15_2} on $\tilde J\times I_0$ and on $\tilde J\times I_1$, respectively. 
Here $\tilde J\subset \R$ is a bounded open interval. Then  we see that $ e^{\phi^{(1)}}a_{0}^{(1)}-e^{\phi^{(0)}} a_{0}^{(0)}$ is holomorphic on $\tilde J\times (I_0\cap I_1)$. By \cite[Example 3.25]{BG_book}, there are holomorphic functions $g_1, g_0$  on $\tilde J\times I_1$ and  $\tilde J\times I_0$, respectively, such that $e^{\phi^{(1)}}a_{0}^{(1)}-e^{\phi^{(0)}} a_{0}^{(0)}=g_0-g_1$ on $\tilde J\times (I_0\cap I_1)$.  Thus, modifying $a_{0}^{(0)}$ and $ a_{0}^{(1)}$, we can always arrange so that $ a_{0}^{(0)}= a_{0}^{(1)}$ on $\tilde J\times (I_0\cap I_1)$.  Proceeding in the same way, we can find $ a_{0}^{(2)}, \dots,  a_{0}^{(N+1)}$ so that $ a_{0}^{(j)}= a_{0}^{(j+1)}$ on  $\tilde J\times (I_j\cap I_{j+1})$, and hence,  we construct the amplitude of the second type globally.

Thus, we obtain  the quasimodes $v_s^{(0)},\dots, v_s^{(N+1)}$ such that 
\begin{equation}
\label{eq_equal_quasi}
v_s^{(j)}(x_1,\cdot)=v_s^{(j+1)}(x_1,\cdot)\quad \text{in}\quad  U_j\cap U_{j+1},
\end{equation}
for all $x_1$. Let $\chi_j=\chi_j(t)\in C^\infty_0(I_j)$ be such that $\sum_{j=0}^{N+1} \chi_j=1$ near $[-\varepsilon,L+\varepsilon]$, and define our quasimode $v$ globally by
 \[
 v_s=\sum_{j=0}^{N+1} \chi_j v_s^{(j)}. 
 \]

Let us next give a local description of the quasimode $v_s$ near self-intersecting points of the geodesic $\gamma$ and near the other points of $\gamma$.  To that end, let  $p_1,\dots,p_R\in M_0$ be the distinct points where the geodesic self-intersects, and let $0\le t_1<\dots<t_{R'}$ be the times of self-intersections. Let $V_1,\dots, V_R$ be small neighborhoods in $\hat M_0$ around $p_j$, $j=1,\dots, R$. Then choosing $\delta'$ small enough we obtain an open cover in $\hat M_0$, 
\begin{equation}
\label{eq_open_cover_sup}
\supp(v_s(x_1,\cdot ))\cap M_0\subset (\cup_{j=1}^R V_j)\cup (\cup_{k=1}^S W_k),
\end{equation}
where in each $V_j$, the quasimode is a finite sum,
\begin{equation}
\label{eq_rep_vs_1}
v_s(x_1,\cdot)|_{V_j}=\sum_{l: \gamma(t_l)=p_j} v_s^{(l)}(x_1,\cdot),
\end{equation}
and in each $W_k$ (where there are no self-intersecting points), in view of \eqref{eq_equal_quasi}, there is some $l(k)$ so that the quasimode is given by 
\begin{equation}
\label{eq_rep_vs_2}
v_s(x_1,\cdot)|_{W_k}=v_s^{l(k)}(x_1,\cdot).
\end{equation}
We also have 
\[
\supp(v_s)\cap M\subset (\cup_{j=1}^R  \tilde J\times V_j)\cup (\cup_{k=1}^S \tilde J\times W_k),
\]
where $\tilde J\subset\R$ is a bounded open interval. 

Finally, the bounds in \eqref{eq_prop_gaussian_1}  follows from the bounds \eqref{eq_200_17},  \eqref{eq_200_18}, \eqref{eq_200_27}, and the representations \eqref{eq_rep_vs_1} and \eqref{eq_rep_vs_2} of $v$.

\textit{Step 6. Construction of the Gaussian beam quasimodes $w_s$.}

Now look for a Gaussian beam quasimode for the operator $e^{-sx_1}(-h^2\Delta_g)^2 e^{sx_1}$ in the form
\begin{equation}
\label{eq_200_30_w_s}
w_s(x_1,t,y) = e^{is\varphi(t,y)}b(x_1,t,y;s),
\end{equation}
where  $\varphi \in C^\infty(U)$ is  the phase function given by \eqref{eq_gauss_9}, and $b\in C^{\infty}(\R\times U)$ is an amplitude, which we shall proceed to determine. To that end, first similar to \eqref{eq_200_2}, we get 
\begin{equation}
\label{eq_200_30}
e^{-sx_1} (-h^2 \Delta_g)^2 e^{sx_1} w_s=e^{is\varphi}h^4\big(-\Delta_g -s(2 \langle \nabla_g \tilde{\tilde \varphi}, \nabla_g \cdot \rangle_g +\Delta_g \tilde{\tilde \varphi})-s^2\langle \nabla_g \tilde{\tilde \varphi}, \nabla_g \tilde{\tilde \varphi}\rangle_g\big)^2b,
\end{equation}
where 
\begin{equation}
\label{eq_200_31}
\tilde{\tilde\varphi}(x_1,t,y)=x_1+i\varphi(t,y).  
\end{equation}
With $\varphi$ given by \eqref{eq_gauss_9}, we have 
\[
\langle \nabla_g \tilde{\tilde \varphi}, \nabla_g \tilde{\tilde \varphi}\rangle_g=\mathcal{O}(|y|^3),
\]
as $y\to 0$. We thus look for the smooth  amplitude $b=b(x_1,x')$ satisfying the transport equation, 
\begin{equation}
\label{eq_200_31_tilde_L_trans}
\tilde L^2b=\mathcal{O}(|y|),
\end{equation}
where 
\begin{equation}
\label{eq_200_31_tilde_L}
\tilde L=2 \langle \nabla_g \tilde{\tilde \varphi}, \nabla_g \cdot \rangle_g +\Delta_g \tilde{\tilde \varphi}.
\end{equation}
Let us simplify the operator $\tilde L$. First using \eqref{eq_200_31}, we get 
\begin{equation}
\label{eq_200_32}
\langle \nabla_g \tilde{\tilde \varphi}, \nabla_g \cdot \rangle_g= \frac{1}{c}(\p_{x_1}+ig_0^{-1}(x')\varphi'_{x'}\cdot\p_{x'} ), 
\end{equation}
\begin{equation}
\label{eq_200_33}
\Delta_g \tilde \varphi=\Delta_g x_1+i \Delta_g \varphi(x').
\end{equation}
Hence, using \eqref{eq_200_32}, \eqref{eq_200_33}, \eqref{eq_200_7}, and \eqref{eq_200_8}, the operator $\tilde L$ given by \eqref{eq_200_31_tilde_L} becomes 
\begin{equation}
\label{eq_200_34}
\tilde L=\frac{2}{c}(\p_{x_1}+ig_0^{-1}(x')\varphi'_{x'}\cdot\p_{x'}) +\bigg(\frac{n}{2}-1\bigg) \frac{1}{c^2} \partial_{x_1}c+\frac{i}{c} \Delta_{g_0} \varphi + \bigg(\frac{n}{2} -1\bigg) \frac{i}{c^2}  \langle \nabla_{g_0}c, \nabla_{g_0} \varphi \rangle_{g_0}.
\end{equation}

Using \eqref{eq_200_11}, \eqref{eq_200_11_2}, \eqref{eq_guassian_inner}, the operator $\tilde L$ in 
\eqref{eq_200_34} becomes
\begin{equation}
\begin{aligned}
\label{eq_gauss_14_tilde}
\tilde L = &=\frac{2}{c(x_1,t,0)}\bigg[ \partial_{x_1} +i \partial_t  + iH(t)y\cdot\partial_y + (\partial_{x_1}+i\partial_t)\log c(x_1,t,0)^{\frac{n}{4}-\frac{1}{2}}\\
&  +\frac{i}{2}\tr H(t)+\mathcal{O}(|y|)+\mathcal{O}(|y|)(\partial_{x_1},\partial_t) + \mathcal{O}(|y|^2) \partial_y \bigg].
\end{aligned}
\end{equation}
We look for  the amplitude b in the form
\begin{equation}
\label{eq_200_35}
b(x_1,t,y) = h^{-\frac{(n-2)}{4}}b_0(x_1,t)\chi \bigg( \frac{y}{\delta'} \bigg), 
\end{equation}
where $b_0(\cdot, \cdot) \in C^\infty(\R\times  \{t: |t-t_0|<\delta\} )$ is independent of $y$, and in view of \eqref{eq_200_31_tilde_L_trans}, $b_0$ should satisfy
\begin{equation}
\label{eq_200_36}
\tilde{L}^2b_0 = \mathcal{O}(|y|),\quad y\to 0. 
\end{equation}
It follows from \eqref{eq_200_34} that 
\begin{equation}
\label{eq_gauss_14_tilde_L_0}
\tilde L=\frac{2}{c(x_1,t,0)}(\tilde L_0+\tilde R),
\end{equation}
where 
\begin{equation}
\label{eq_gauss_17_tilde}
\tilde L_0 =  (\partial_{x_1} + i \partial_t )+(\partial_{x_1}+i\partial_t)\log c(x_1,t,0)^{\frac{n}{4}-\frac{1}{2}} +\frac{i}{2}\tr H(t),
\end{equation}
and 
\begin{equation}
\label{eq_gauss_17_R_tilde}
\tilde R=iH(t)y\cdot\partial_y+\mathcal{O}(|y|)+\mathcal{O}(|y|)(\partial_{x_1},\partial_t) + \mathcal{O}(|y|^2) \partial_y. 
\end{equation}
In contrast to the construction of the Gaussian beam quasimodes $v_s$, we shall only need amplitudes of the first type. To construct such amplitudes, we note that as  $b_0$ is independent of $y$, if $b_0$ solves the following equation 
\begin{equation}
\label{eq_gauss_16_tilde}
\tilde L_0b_0 = 0,
\end{equation}
then $b_0$ satisfies \eqref{eq_200_36}. To find a solution to \eqref{eq_gauss_16_tilde}, we note that 
\begin{equation}
\label{eq_200_12_tilde}
\tilde L_0 = e^{-\tilde \phi(x_1,t)}(\partial_{x_1}+i\partial_t)e^{\tilde \phi(x_1,t)},
\end{equation}
where $\tilde \phi(x_1,t)$ is given by 
\begin{equation}
\label{eq_guass_type1_amplitude_tilde_0}
\tilde \phi(x_1,t)=\log c(x_1,t,0)^{\frac{n}{4}-\frac{1}{2}}+F(t), \quad \partial_t F(t)=\frac{1}{2}\tr H(t).
\end{equation}
We solve \eqref{eq_gauss_16_tilde} by taking 
\begin{equation}
\label{eq_guass_type1_amplitude_tilde}
b_0 = e^{-\tilde \phi} = c(x_1,t,0)^{\frac{1}{2}-\frac{n}{4}}e^{-F(t)}.
\end{equation}

Proceeding further as in the construction of the quasimode $v_s$ above, we obtain the quasimode $w_s \in C^\infty(M)$ such that \eqref{eq_prop_gaussian_2} holds. 

\end{proof}

We shall need the following result. 
\begin{prop}
\label{prop_computations_gaussian_1}
Let $X\in C(M,TM)$ be a complex vector field, let $\psi\in C(M_0)$, and let $x'_1\in \R$. Then there exist the Gaussian beam quasimodes $v_s$ and $w_s$ given by Proposition \ref{prop_Gaussian_beams} such that $v_s$ is obtained using  amplidutes of the first type and we have 
\begin{equation}
\label{eq_prop_gaussian_3}
\lim_{h\to 0} \int_{\{x'_1\}\times M_0} v_s \overline{w_s} \psi dV_{g_0}= \int_0^L e^{-2\lambda t}  c(x_1,\gamma(t))^{1-\frac{n}{2}} \psi(\gamma(t)) dt,
\end{equation}
and
\begin{equation}
\label{eq_prop_gaussian_4}
\lim_{h\to 0} h\int_{\{x'_1\}\times M_0} X(v_s)\overline{w_s}\psi dV_{g_0} 
=i\int_0^L  X_t(x_1',\gamma(t)) e^{-2\lambda t} c(x_1,\gamma(t))^{1-\frac{n}{2}} \psi(\gamma(t)) dt.
\end{equation}
Here $X_t(x_1',\gamma(t))=\langle X(x_1',\gamma(t)),(0,\dot{\gamma}(t))\rangle_g$.
\end{prop}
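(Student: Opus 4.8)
\textbf{Proof strategy for Proposition \ref{prop_computations_gaussian_1}.}

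The plan is to plug in the explicit Gaussian beam ans\"atze from Proposition \ref{prop_Gaussian_beams} and carry out a stationary-phase / concentration analysis on the slice $\{x_1'\}\times M_0$. First I would fix the geodesic $\gamma$, and choose $v_s$ built from amplitudes of the first type, i.e. $a_0^{(l)}=e^{-\phi^{(l)}}=c(x_1,t,0)^{\frac12-\frac n4}e^{-G^{(l)}(t)}$, while $w_s$ is built with $b_0^{(l)}=e^{-\tilde\phi^{(l)}}=c(x_1,t,0)^{\frac12-\frac n4}e^{-F^{(l)}(t)}$. The key observation is that in the product $v_s\overline{w_s}$ the leading exponential is $e^{is\varphi^{(l)}}\overline{e^{is\varphi^{(m)}}}$; when $l=m$ this is $e^{is\varphi-\overline{is\varphi}}=e^{i(s-\bar s)\operatorname{Re}\varphi}e^{-(s+\bar s)\operatorname{Im}\varphi}=e^{-2\lambda\operatorname{Re}\varphi}e^{-2\mu\operatorname{Im}\varphi}$ using $s=\mu+i\lambda$, $\mu=1/h$; the factor $e^{-2\mu\operatorname{Im}\varphi}\sim e^{-\frac2h d|y|^2}$ forces concentration on $\Gamma$ (the geodesic), where $\operatorname{Re}\varphi(\gamma(t))=t$. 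For $l\ne m$ the phases have distinct real parts along $\gamma$ (different branches), so after a partition of unity into the neighborhoods $V_j$, $W_k$ of Step 5, the cross terms between different branches at a self-intersection point either vanish in the limit or contribute additively via their own $t$-interval; the upshot is that summing over all branches $l$ reconstitutes the single integral $\int_0^L(\cdots)\,dt$. On each branch one changes to Fermi coordinates $(t,y)$, and the measure $dV_{g_0}=|g_0|^{1/2}dt\,dy=(1+\mathcal O(|y|^2))dt\,dy$.

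For \eqref{eq_prop_gaussian_3}: after concentration, the $y$-integral of $h^{-\frac{n-2}{2}}e^{-\frac{2\mu}{h}\cdot0}$... more precisely one has $v_s\overline{w_s}\psi = h^{-\frac{n-2}{2}}e^{-2\lambda\operatorname{Re}\varphi}e^{-2\mu\operatorname{Im}\varphi}a_0\overline{b_0}\,\chi(y/\delta')^2\psi + (\text{lower order})$, and $\int_{\R^{n-2}}h^{-\frac{n-2}{2}}e^{-2\mu\operatorname{Im}(\frac12 H(t)y\cdot y)}\,dy$ converges (Gaussian integral) to a constant; here the crucial point, exactly as in \cite{DKuLS_2016}, is that this normalizing constant is precisely cancelled by the $G^{(l)},F^{(l)}$ factors in $a_0\overline{b_0}$: indeed $\partial_t G^{(l)}=\partial_t F^{(l)}=\frac12(\Delta_{g_0}\varphi^{(l)})(t,0)=\frac12\operatorname{tr}H(t)$, so $G^{(l)}=F^{(l)}$ (same initial conditions along the glued pieces), hence $e^{-G^{(l)}}\overline{e^{-F^{(l)}}}=e^{-2\operatorname{Re}G^{(l)}}$, and the standard computation $\det(\operatorname{Im}H(t))^{-1/2}e^{-2\operatorname{Re}G(t)}\cdot(\text{normalization})=1$ holds because $\partial_t\log\det\operatorname{Im}H = \operatorname{tr}(\operatorname{Im}H)^{-1}\operatorname{Im}\dot H$ and the Riccati equation relate the Gaussian volume to $\operatorname{tr}H$; meanwhile $a_0\overline{b_0}$ contributes $c(x_1,t,0)^{1-\frac n2}$ and $\operatorname{Re}\varphi=t$ on $\Gamma$. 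Collecting: $\lim_{h\to0}\int v_s\overline{w_s}\psi\,dV_{g_0}=\int_0^L e^{-2\lambda t}c(x_1',\gamma(t))^{1-\frac n2}\psi(\gamma(t))\,dt$, the cross terms vanishing since distinct branches meet $\gamma$ at single points (measure zero) with mismatched real phases. (One writes $c(x_1,\gamma(t))$ with $x_1=x_1'$ on the slice.)

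For \eqref{eq_prop_gaussian_4}: decompose $X=X_1\partial_{x_1}+X'$ where $X'$ is tangent to $M_0$; in Fermi coordinates $X'=X_t\partial_t+\sum X_{y_j}\partial_{y_j}$ with $X_t=\langle X,(0,\dot\gamma)\rangle_g$ along $\Gamma$. When $X$ hits $v_s=e^{is\varphi}a$, the dominant term is $is(X\varphi)a$; since $\varphi$ depends only on $(t,y)$ and $\nabla\varphi(\gamma(t))=\dot\gamma(t)$, we get $X\varphi|_\Gamma = \langle X',\nabla_{g_0}\varphi\rangle = X_t$ at $y=0$ (the $\partial_{x_1}\varphi=0$ piece drops, and the $y$-derivative terms are $\mathcal O(|y|)$). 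The remaining derivative falling on $a$ gives $X(a)=\mathcal O(h^{-\frac{n-2}{4}})$, which after multiplication by $h$ is $\mathcal O(h)$ smaller and vanishes. Thus $hX(v_s)\overline{w_s}\psi = h\cdot is\, X_t\, v_s\overline{w_s}\psi/1 + o(1) = i\,X_t\,v_s\overline{w_s}\psi + o(1)$ since $hs=\mu h + i\lambda h = 1+i\lambda h\to1$. Applying the already-established limit \eqref{eq_prop_gaussian_3} with $\psi$ replaced by $X_t(x_1',\gamma(\cdot))\psi$ and noting $X_t$ along the geodesic is continuous, we obtain $\lim_{h\to0} h\int X(v_s)\overline{w_s}\psi\,dV_{g_0}=i\int_0^L X_t(x_1',\gamma(t))e^{-2\lambda t}c(x_1',\gamma(t))^{1-\frac n2}\psi(\gamma(t))\,dt$.

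\textbf{Main obstacle.} The routine-but-delicate part is the precise bookkeeping of the normalizing Gaussian constant and its exact cancellation against $e^{-G}\overline{e^{-F}}$ and the Jacobian $|g_0|^{1/2}$, together with the self-intersection analysis showing cross-branch terms do not contribute; both are handled exactly as in \cite[Section 3]{DKuLS_2016} and \cite{KK_2018}, the only new feature being the harmless conformal factor $c$ carried through in $a_0,b_0$ which supplies the $c^{1-n/2}$ weight.
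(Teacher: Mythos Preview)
Your approach is essentially the paper's: localize via the $V_j,W_k$ cover, compute the diagonal terms by Gaussian concentration in Fermi coordinates, normalize via the Riccati identity $\det(\operatorname{Im}H(t))=\det(\operatorname{Im}H(t_0))\exp\big(-2\int_{t_0}^t\operatorname{tr}\operatorname{Re}H\big)$ so that $e^{-(G+\overline F)}\pi^{(n-2)/2}/\sqrt{\det\operatorname{Im}H}$ is constant (chosen $=1$), and for \eqref{eq_prop_gaussian_4} extract the leading phase derivative $is\,X(\varphi)$ and reduce to \eqref{eq_prop_gaussian_3}. That is exactly what the paper does.

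One point to sharpen: your explanation that the cross terms $v_s^{(l)}\overline{w_s^{(l')}}$ for $l\ne l'$ vanish ``since distinct branches meet $\gamma$ at single points (measure zero)'' is not the right mechanism. The integral is over a full neighborhood $V_j$, not over the geodesic, so measure-zero reasoning does not apply; the cross term has modulus comparable to the diagonal term pointwise. What kills it is an honest non-stationary phase argument: writing $v_s^{(l)}\overline{w_s^{(l')}}=e^{\frac{i}{h}\phi}\,p^{(l)}\overline{q^{(l')}}$ with $\phi=\operatorname{Re}\varphi^{(l)}-\operatorname{Re}\varphi^{(l')}$, transversality of the self-intersection gives $d\phi(p_j)\ne 0$, and one integrates by parts once against $\frac{h}{i}|d\phi|^{-2}\langle d\phi,d\cdot\rangle_{g_0}$, picking up a boundary term (controlled by \eqref{eq_v_s_on_boundary}) and a bulk term where the worst contribution comes from differentiating $e^{-\frac1h(\operatorname{Im}\varphi^{(l)}+\operatorname{Im}\varphi^{(l')})}$, yielding $\mathcal{O}(h^{-1}|y|)e^{-d|y|^2/h}$ and hence $\mathcal{O}(h^{1/2})$. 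You do cite \cite{DKuLS_2016} and \cite{KK_2018} for this in your ``Main obstacle'' paragraph, so you are aware of the issue; just be sure the write-up invokes oscillation rather than measure zero.
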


\begin{proof}

\textit{Step 1. Proof of  the estimate \eqref{eq_prop_gaussian_3}.}

Let $\psi\in C(M_0)$, $x'_1\in \R$. 
Using a partition of unity, in view of \eqref{eq_open_cover_sup}, it suffices to establish \eqref{eq_prop_gaussian_3} for $\psi$ having compact support in one of the sets $V_j$ or   $W_k$.   First, assume that  $\psi\in C_0 ( M_0)$, $\supp(\psi)\subset W_k$. Thus, in view of \eqref{eq_rep_vs_2}, \eqref{eq_200_16}, \eqref{eq_200_30_w_s}, \eqref{eq_200_35},  on $\supp(\psi)$, we have 
\begin{equation}
\label{eq_gauss_vs_ws}
v_s=e^{is\varphi} h^{-\frac{(n-2)}{4}}a_0 (x'_1,t)\chi\bigg(\frac{y}{\delta'}\bigg),\quad w_s=e^{is\varphi} h^{-\frac{(n-2)}{4}}b_0 (x'_1,t)\chi\bigg(\frac{y}{\delta'}\bigg).
\end{equation}
 To proceed, we shall need the following consequence of \eqref{eq_gauss_3_metric},  
\begin{equation}
\label{eq_gauss_|g_0|}
 |g_0|^{1/2}=1+\mathcal{O}(|y|^2),
\end{equation}
as well as 
\begin{equation}
\label{eq_200_38}
is\varphi -i\overline{s}\overline{\varphi}=-2\frac{1}{h}\text{Im}\varphi-2\lambda\text{Re}\varphi. 
\end{equation}

Using \eqref{eq_gauss_vs_ws}, \eqref{eq_gauss_|g_0|},  \eqref{eq_200_38},  \eqref{eq_gauss_9}, we get 
\begin{equation}
\label{eq_gauss_25}
\begin{aligned}
&\int_{\{x'_1\}\times M_0} v_s \overline{w_s} \psi dV_{g_0}\\
&=\int_{0}^L \int_{\R^{n-2}} e^{-2\frac{1}{h}\text{Im}\varphi} e^{-2\lambda\text{Re}\varphi} h^{-\frac{(n-2)}{2}} a_0(x'_1,t)\overline{b_0(x'_1,t)} \chi^2\bigg(\frac{y}{\delta'}\bigg) \psi(t,y) |g_0|^{\frac{1}{2}} dy dt\\
&=\int_0^L \int_{\R^{n-2}} e^{-\frac{1}{h}\text{Im} H(t)y\cdot y}  e^{-2\lambda t} e^{\lambda \mathcal{O}(|y|^2)} h^{-\frac{(n-2)}{2}} a_0(x'_1,t)\overline{b_0(x'_1,t)}\chi^2\bigg(\frac{y}{\delta'}\bigg)\\
& \psi(t,y)(1+\mathcal{O}(|y|^2)) dy dt.
\end{aligned}
\end{equation}
Making the change of variable $y=h^{1/2}\tilde y$ in  \eqref{eq_gauss_25}, we obtain that  
\begin{equation}
\label{eq_gauss_25_1}
\begin{aligned}
\int_{\{x'_1\}\times M_0} v_s \overline{w_s} \psi dV_{g_0}=\int_0^L\int_{\R^{n-2}} e^{-\text{Im} H(t)\tilde y\cdot\tilde y}e^{-2\lambda t}e^{\lambda h\mathcal{O}(|\tilde y|^2)}a_0(x_1',t)\overline{b_0(x_1',t)}\\
\chi^2\bigg(\frac{h^{1/2}\tilde y}{\delta'}\bigg)\psi(t,h^{1/2}\tilde y)(1+h\mathcal{O}(|\tilde y|^2))dtd\tilde y.
\end{aligned}
\end{equation}

Using that 
\begin{equation}
\label{eq_gauss_27}
\int_{\R^{n-2}} e^{-\text{Im} H(t) y\cdot y} dy=\frac{\pi^{(n-2)/2}}{\sqrt{\det(\text{Im} H(t))}},
\end{equation}
and the dominated covergence theorem, we get from 
\eqref{eq_gauss_25_1} that 
\begin{equation}
\label{eq_gauss_26}
\begin{aligned}
& \lim_{h\to 0}\int_{\{x'_1\}\times M_0} v_s \overline{w_s} \psi dV_{g_0}\\
&=\int_0^L e^{-2\lambda t}  a_0(x'_1,t)\overline{b_0(x'_1,t)} \psi(t,0) \int_{\R^{n-2}} e^{-\text{Im} H(t) y\cdot y} dydt\\
&=
\int_0^L e^{-2\lambda t}  a_0(x'_1,t)\overline{b_0(x'_1,t)}\frac{\pi^{(n-2)/2}}{\sqrt{\det(\text{Im} H(t))}} \psi(t,0) dt.
\end{aligned}
\end{equation}
Let us proceed to simplify the expression in \eqref{eq_gauss_26} in the case when $a_0$ is the amplitude of the first type, i.e.  $a_0$ be given by \eqref{eq_guass_type1_amplitude},  and $b_0$ be given by 
\eqref{eq_guass_type1_amplitude_tilde}. Then 
\begin{equation}
\label{eq_gauss_26_100}
a_0(x'_1,t)\overline{b_0(x'_1,t)}\frac{\pi^{(n-2)/2}}{\sqrt{\det(\text{Im} H(t))}}=c(x_1,t,0)^{1-\frac{n}{2}}e^{-(G(t)+\overline{F(t)})}\frac{\pi^{(n-2)/2}}{\sqrt{\det(\text{Im} H(t))}}.
\end{equation}
Now it follows from \eqref{eq_guass_type1_amplitude} and
\eqref{eq_guass_type1_amplitude_tilde_0} that 
\begin{equation}
\label{eq_gauss_26_101}
G(t)+\overline{F(t)}=G(t_0)+\overline{F(t_0)}+\int_{t_0}^t \text{tr}\, \text{Re}(H(s))ds.
\end{equation}
Using \eqref{eq_gauss_26_101} and the following property of solutions of the matrix Riccati equation
\cite[Lemma 2.58]{KKL_book}, 
\[
\det\, (\text{Im} H(t))=\det \, (\text{Im} H(t_0)) e^{-2\int_{t_0}^t\text{tr}\, \text{Re}(H(s))ds},
\]
we see that 
\begin{equation}
\label{eq_gauss_26_102}
e^{-(G(t)+\overline{F(t)})}\frac{\pi^{(n-2)/2}}{\sqrt{\det(\text{Im} H(t))}}=e^{-(G(t_0)+\overline{F(t_0)})}\frac{\pi^{(n-2)/2}}{\sqrt{\det(\text{Im} H(t_0))}}
\end{equation}
is a constant in $t$.  To fix this constant, when constructing the amplitude $a_0$ and $b_0$, specifically, when  solving \eqref{eq_guass_type1_amplitude} and
\eqref{eq_guass_type1_amplitude_tilde_0} in $U_0$, we choose initial conditions for $G$ and $F$ so that the constant in \eqref{eq_gauss_26_102} is equal to $1$. With this choice, it follows from \eqref{eq_gauss_26},  \eqref{eq_gauss_26_100}, \eqref{eq_gauss_26_102} that 
\begin{equation}
\label{eq_gauss_26_103}
 \lim_{h\to 0}\int_{\{x'_1\}\times M_0} v_s \overline{w_s} \psi dV_{g_0}=
\int_0^L e^{-2\lambda t}  c(x_1,t,0)^{1-\frac{n}{2}} \psi(t,0) dt.
\end{equation}
This completes the proof of \eqref{eq_prop_gaussian_3} in the case when $\supp(\psi)\subset W_k$.

Let us now establish \eqref{eq_prop_gaussian_3} when   $\supp(\psi) \subset V_j$.  Here  on $\supp(\psi)$ we have
\begin{equation}
\label{eq_gaus_v_s_w_s_sum_-10} 
v_s=\sum_{l: \gamma(t_l)=p_j} v_s^{(l)}, \quad w_s=\sum_{l: \gamma(t_l)=p_j} w_s^{(l)},
\end{equation}
and hence, 
\begin{equation}
\label{eq_gaus_v_s_w_s_sum} 
v_s\overline{w_s}=\sum_{l: \gamma(t_l)=p_j} v_s^{(l)}\overline{w_s^{(l)}}+ \sum_{l\ne l',\gamma(t_l)=\gamma(t_{l'})=p_j} v_s^{(l)}\overline{w_s^{(l')}}.
\end{equation}
We shall use a non-stationary phase argument as in \cite[end of proof Proposition 3.1]{DKuLS_2016} to show that the contribution of the mixed terms vanishes  in the limit $h\to 0$, i.e. if $l\ne l'$,
\begin{equation}
\label{eq_gauss_33}
\lim_{h \to 0}   \int_{\{x'_1\}\times M_0} v^{(l)}_s \overline{w^{(l')}_s} \psi dV_{g_0}= 0.
\end{equation}
In doing so, write
\[
v_s^{(l)}=e^{i \frac{1}{h}\text{Re}\, \varphi^{(l)}}p^{(l)}, \quad p^{(l)}=e^{-\lambda \text{Re}\, \varphi^{(l)}} e^{-s \text{Im}\, \varphi^{(l)}}a^{(l)},
\]
and
\[
w_s^{(l')}=e^{i \frac{1}{h}\text{Re}\, \varphi^{(l')}}q^{(l')}, \quad q^{(l')}=e^{-\lambda \text{Re}\, \varphi^{(l')}} e^{-s \text{Im}\, \varphi^{(l')}}b^{(l')},
\]
and therefore, 
\begin{equation}
\label{eq_gauss_34}
v^{(l)}_s \overline{w^{(l')}_s} =e^{i\frac{1}{h}\phi} p^{(l)} \overline{q^{(l')}},
\end{equation}
where 
\[
\phi= \Re \varphi^{(l)}-\Re \varphi^{(l')}.
\]
Thus, in view of \eqref{eq_gauss_33} and \eqref{eq_gauss_34} we shall show that for $l\ne l'$,
\begin{equation}
\label{eq_gauss_35}
\lim_{h \to 0}   \int_{\{x'_1\}\times M_0} e^{i \frac{1}{h}\phi} p^{(l)} \overline{q^{(l')}}  \psi dV_{g_0}= 0.
\end{equation}
Since $\p_t\varphi^{(l)}(t,0)=\p_t\varphi^{(l')}(t,0)=1$ and the geodesic intersects itself transversally, as explained in \cite[Lemma 7.2]{Kenig_Salo_APDE_2013},   we see that $d\phi(p_j) \ne 0$.  
By decreasing the set $V_j$ if necessary, we may assume that  $d\phi\ne 0$ in $V_j$. 

To prove \eqref{eq_gauss_35}, we shall integrate by parts and in doing so, we let $\varepsilon>0$ be fixed, and decompose 
$
\psi=\psi_1+\psi_2,
$
where $\psi_1\in C^\infty(M_0)$, $\supp(\psi_1)\subset V_j$ and  and $\|\psi_2\|_{L^\infty(V_j\cap M_0)}\le \varepsilon$.  Notice that $\psi$ may be nonzero on $\p M_0$.  We have
\begin{equation}
\label{eq_gauss_36}
\bigg| \int_{\{x'_1\}\times M_0} e^{i \frac{1}{h}\phi} p^{(l)} \overline{q^{(l')}}  \psi_2 dV_{g_0}\bigg| \le \| v^{(l)}_s\|_{L^2}\| w^{(l)}_s\|_{L^2}\|\psi_2\|_{L^\infty}\le \mathcal{O} (\varepsilon). 
\end{equation}
For the smooth part $\psi_1$, we integrate by parts using that 
\[
e^{i\frac{1}{h}\phi}=\frac{h}{i} L(e^{i\frac{1}{h}\phi}), \quad L=\frac{1}{|d\phi|^{2}}\langle  d\phi, d\cdot \rangle_{g_0}.
\]
We have
\begin{equation}
\label{eq_gauss_37}
\begin{aligned}
 \int_{\{x'_1\}\times M_0} e^{i\frac{1}{h}\phi} p^{(l)} \overline{q^{(l')}}  \psi_1 dV_{g_0} =&\int_{\{x'_1\}\times (V_j\cap \p M_0)} h\frac{\p_\nu \phi}{i |d\phi|^2} e^{i\frac{1}{h}\phi} p^{(l)} \overline{q^{(l')}} \psi_1 dS \\
 &+h\frac{1}{i} \int_{\{x'_1\}\times M_0} e^{i\frac{1}{h}\phi} L^t( p^{(l)} \overline{q^{(l')}}  \psi_1) dV_{g_0}, 
\end{aligned}
\end{equation}
where $L^t=-L-\div L$ is the transpose of $L$. 

In view of \eqref{eq_v_s_on_boundary}, the boundary term is of $\mathcal{O}(h)$ as $h \to 0$. To estimate the second term in the right hand side of \eqref{eq_gauss_37}, we recall that 
\begin{align*}
p^{(l)} \overline{q^{(l')}}=e^{-\lambda (\text{Re}\, \varphi^{(l)}+\text{Re}\, \varphi^{(l')})} e^{-i\lambda (\text{Im}\, \varphi^{(l)}-\text{Im}\, \varphi^{(l')})} 
e^{-\frac{1}{h} (\text{Im}\, \varphi^{(l)}+\text{Im}\, \varphi^{(l')})} 
h^{-\frac{(n-2)}{2}}\\
 a_0^{(l)}(x_1',t)\overline{b_0^{(l')}(x_1',t)}\chi^2 \bigg(\frac{y}{\delta'}\bigg).
\end{align*}
This shows that to bound the second term in the right hand side of \eqref{eq_gauss_37}, it is enough to analyze the contributions occurring when differentiating 
\[
e^{-\frac{1}{h}(\text{Im}\, \varphi^{(l)}+\text{Im}\, \varphi^{(l')})},
\]
as all the other contributions are  of $\mathcal{O}(h)$, as $h\to 0$.  

As in \cite{DKuLS_2016}, using \eqref{eq_200_3_2}, we have
\[
|L (e^{-\frac{1}{h}(\text{Im}\, \varphi^{(l)}+\text{Im}\, \varphi^{(l')})} )|\le  \mathcal{O}(h^{-1}) |d (\text{Im}\, \varphi^{(l)}+\text{Im}\, \varphi^{(l')}) |e^{-\frac{1}{h} d|y|^2}\le \mathcal{O}
(h^{-1} |y|)e^{-\frac{1}{h} d|y|^2},
\]
which shows that the corresponding contribution to the second term  in the right hand side of \eqref{eq_gauss_37} is of $\mathcal{O}(h^{1/2})$. 
This shows that the integral in the left hand side of \eqref{eq_gauss_37}  goes to $0$ as $h\to 0$, and this together with \eqref{eq_gauss_36} establishes \eqref{eq_gauss_33}. 

Using \eqref{eq_gauss_26_103} for each of the factors $v_s^{(l)}\overline{w_s^{(l)}}$ in \eqref{eq_gaus_v_s_w_s_sum}, we get
\[
\lim_{h\to 0}   \int_{\{x'_1\}\times M_0} v^{(l)}_s \overline{w^{(l)}_s} \psi dV_{g_0}=\int_{I_l} e^{-2\lambda t}  c(x_1,t,0)^{1-\frac{n}{2}} \psi(t,0) dt.
\]
Summing over $I_l$, appearing in the Fermi coordinates,  such that $t_l\in I_l$ and $\gamma(t_l)=p_j$, we get \eqref{eq_prop_gaussian_3} when   $\supp(\psi) \subset V_j$, and hence, in general.   

\textit{Step 2. Establishing the estimate \eqref{eq_prop_gaussian_4}.}

Let  $X\in C(M,TM)$ be complex vector field,  $\psi\in C(M_0)$ and $x'_1\in \R$. Using a partition of unity, it is enough to verify \eqref{eq_prop_gaussian_4} in the following two cases: $\supp(\psi)\subset W_k$ and $\supp(\psi)\subset V_j$. Assume first that  $\supp(\psi)\subset W_k$. Using \eqref{eq_gauss_vs_ws}, we get 
\begin{equation}
\label{eq_gauss_38}
h\int_{\{x'_1\}\times M_0} X(v_s)\overline{w_s}\psi dV_{g_0}=I_{1,1}+I_{1,2}+I_2,
\end{equation}
where 
\begin{equation}
\label{eq_gauss_38_I_1_1}
I_{1,1}=  \int_{\{x'_1\}\times M_0} i X(\varphi)v_s \overline{w_s} \psi dV_{g_0},
\end{equation}
\begin{equation}
\label{eq_gauss_38_I_1_2}
I_{1,2}=- h \int_{\{x'_1\}\times M_0} \lambda X(\varphi)v_s \overline{w_s} \psi dV_{g_0},
\end{equation}
\begin{equation}
\label{eq_gauss_38_I_2}
I_2= h\int_{\{x'_1\}\times M_0} h^{-\frac{(n-2)}{4}}e^{is\varphi} X(a_0\chi)\overline{w_s} \psi dV_{g_0}.
\end{equation}
Using \eqref{eq_prop_gaussian_1} and \eqref{eq_prop_gaussian_2}, we have 
\begin{equation}
\label{eq_200_50}
\begin{aligned}
&|I_{1,2}|\le \mathcal{O}(h)\|v_s(x',\cdot)\|_{L^2(M_0)}\|w_s(x_1',\cdot)\|_{L^2(M_0)}=\mathcal{O}(h),\\
&|I_2|\le \mathcal{O}(h)\|e^{is\varphi}h^{-\frac{(n-2)}{4}}\|_{L^2(\{|y|\le \delta'/2\})}\|w_s(x_1',\cdot)\|_{L^2(M_0)}=\mathcal{O}(h).
\end{aligned}
\end{equation}
Let us now compute $\lim_{h\to 0}I_{1,1}$. To that end, we write
\begin{equation}
\label{eq_200_51}
X=X_1\p_{x_1}+X_t\p_t+X_y\cdot \p_y,\quad x=(x_1,t,y).
\end{equation}
Using \eqref{eq_gauss_9}, we get
\begin{equation}
\label{eq_200_52}
\p_t\varphi=1+\mathcal{O}(|y|^2),\quad \p_y\varphi=\mathcal{O}(|y|).
\end{equation}
As $X$ is continuous, it follows from \eqref{eq_200_51} and \eqref{eq_200_52} that 
\begin{equation}
\label{eq_200_53}
X(\varphi)=(X_t(x_1,t,0)+o(1))(1+\mathcal{O}(|y|^2))+\mathcal{O}(|y|)=X_t(x_1,t,0)+o(1),
\end{equation}
as $y\to 0$, uniformly in $x_1$ and $t$. 
Using \eqref{eq_200_53}, as in \eqref{eq_gauss_25}, we obtain from \eqref{eq_gauss_38_I_1_1} that 
\begin{equation}
\label{eq_200_54}
\begin{aligned}
I_{1,1}=\int_0^L\int_{\R^{n-2}}i (X_t(x_1',t,0)+o(1))h^{-\frac{(n-2)}{2}}e^{-\frac{1}{h}\text{Im}H(t)y\cdot y} e^{-2\lambda t}e^{\lambda \mathcal{O}(|y|^2)}\\
a_0(x_1',t)\overline{b_0(x_1',t)}\chi^2\bigg(\frac{y}{\delta'}\bigg)\psi(t,y)(1+\mathcal{O}(|y|^2))dydt.
\end{aligned}
\end{equation}
We first observe that 
\begin{equation}
\label{eq_200_54_1}
\lim_{h\to 0} I_{1,1,2}=0,
\end{equation}
uniformly in $x_1'$ and $t$,
where
\begin{align*}
I_{1,1,2}=\int_{\R^{n-2}} g(x_1', t,y) dy, \quad 
g(x_1', t,y) = o(1)h^{-\frac{(n-2)}{2}}e^{-\frac{1}{h}\text{Im}H(t)y\cdot y} e^{-2\lambda t}\\
e^{\lambda \mathcal{O}(|y|^2)}
a_0(x_1',t)\overline{b_0(x_1',t)}\chi^2\bigg(\frac{y}{\delta'}\bigg)\psi(t,y)
(1+\mathcal{O}(|y|^2)).
\end{align*}
Indeed, let $\varepsilon>0$ and let $\delta>0$ be such that $|o(1)|\le \varepsilon$ when $|y|\le \delta$. Then 
\begin{align*}
|I_{1,1,2}|&\le \bigg|\int_{|y|\le \delta} g(x_1', t,y)dy\bigg| +\bigg|\int_{|y|\ge \delta} g(x_1', t,y)dy\bigg| \\
&\le \varepsilon \mathcal{O}(1)\bigg|\int_{\R^{n-2}} h^{-\frac{(n-2)}{2}}e^{-\frac{1}{h}\text{Im}H(t)y\cdot y}  dy\bigg| +\mathcal{O}(e^{-d\delta^2/h})\le \varepsilon \mathcal{O}(1)+\mathcal{O}(e^{-d\delta^2/h}),
\end{align*}
showing \eqref{eq_200_54_1}. 

Using \eqref{eq_200_54_1}, making the change of variables $y=h^{1/2}\tilde y$ in \eqref{eq_200_54}, using the dominated convergence theorem, and \eqref{eq_gauss_27}, we get 
\begin{equation}
\label{eq_200_55}
\lim_{h\to 0} I_{1,1}=i\int_0^L  X_t(x_1',t,0) e^{-2\lambda t} a_0(x_1',t)\overline{b_0(x_1',t)} \psi(t,0)\frac{\pi^{(n-2)/2}}{\sqrt{\det(\text{Im}H(t))}}dt.
\end{equation}
It follows from \eqref{eq_gauss_38} with the help of \eqref{eq_200_50} and \eqref{eq_200_55} that 
\begin{equation}
\label{eq_200_56}
\begin{aligned}
\lim_{h\to 0} h&\int_{\{x'_1\}\times M_0} X(v_s)\overline{w_s}\psi dV_{g_0} \\
&=i\int_0^L  X_t(x_1',t,0) e^{-2\lambda t} a_0(x_1',t)\overline{b_0(x_1',t)} \psi(t,0)\frac{\pi^{(n-2)/2}}{\sqrt{\det(\text{Im}H(t))}}dt.
\end{aligned}
\end{equation}
When $a_0$ is the amplitude of the first type, i.e.  $a_0$ be given by \eqref{eq_guass_type1_amplitude},  and $b_0$ be given by  \eqref{eq_guass_type1_amplitude_tilde},  using \eqref{eq_gauss_26_100}, \eqref{eq_gauss_26_102}, we get from 
\eqref{eq_200_56}  that 
\begin{equation}
\label{eq_200_57}
\lim_{h\to 0} h\int_{\{x'_1\}\times M_0} X(v_s)\overline{w_s}\psi dV_{g_0} 
=i\int_0^L  X_t(x_1',t,0) e^{-2\lambda t} c(x_1,t,0)^{1-\frac{n}{2}} \psi(t,0) dt.
\end{equation}
This establishes \eqref{eq_prop_gaussian_4} when $\supp(\psi)\subset W_k$. 

Assume now that $\supp(\psi)\subset V_j$, and therefore, on $\supp(\psi)$, $v_s$ and $w_s$ are given by \eqref{eq_gaus_v_s_w_s_sum_-10}. Then 
\begin{equation}
\label{eq_200_58}
\begin{aligned}
h\int_{\{x_1'\}\times M_0} X(v_s) \overline{w_s}\psi dV_{g_0}=&h\sum_{l:\gamma(t_l)=p_j}\int_{\{x_1'\}\times M_0} X(v_s^{(l)}) \overline{w_s^{(l)}}\psi dV_{g_0}\\
&+h\sum_{l\ne l':\gamma(t_l)=\gamma(t_{l'})=p_j}\int_{\{x_1'\}\times M_0} X(v_s^{(l)}) \overline{w_s^{(l')}}\psi dV_{g_0}.
\end{aligned}
\end{equation}
As before, we shall show that the mixed terms, i.e. $l\ne l'$, vanish in the limit as $h\to 0$, 
\begin{equation}
\label{eq_200_59_0}
\lim_{h\to 0} h\int_{\{x_1'\}\times M_0} X(v_s^{(l)}) \overline{w_s^{(l')}}\psi dV_{g_0}=0.
\end{equation}
It follows from \eqref{eq_gauss_38}, \eqref{eq_gauss_38_I_1_1},  \eqref{eq_gauss_38_I_1_2},  \eqref{eq_gauss_38_I_2}, \eqref{eq_200_50} that we only have to prove that 
\begin{equation}
\label{eq_200_59}
\lim_{h\to 0} \int_{\{x_1'\}\times M_0} i X(\varphi^{(l)}) v_s^{(l)} \overline{w_s^{(l')}}\psi dV_{g_0}=0.
\end{equation}
Now \eqref{eq_200_59} follows by repeating a non-stationary phase argument as in the proof of \eqref{eq_gauss_33} replacing $\psi$ by $X(\varphi^{(l)}) \psi\in C(M_0)$.  Thus, using \eqref{eq_200_58} and \eqref{eq_200_59}, we see that 
\begin{align*}
\lim_{h\to 0}h\int_{\{x_1'\}\times M_0} X(v_s) &\overline{w_s}\psi dV_{g_0}\\
&=\sum_{l:\gamma(t_l)=p_j} i\int_{I_l}  X_t(x_1',t,0) e^{-2\lambda t} c(x_1,t,0)^{1-\frac{n}{2}} \psi(t,0) dt.
\end{align*}
completing the proof of \eqref{eq_prop_gaussian_4} when $\supp(\psi)\subset V_j$. 
\end{proof}

We shall also need the following result. 
\begin{prop}
\label{prop_computations_gaussian_2}
Let $\psi\in C^1(\R\times M_0)$ be such that $\psi(x_1,\cdot)|_{\p M_0}=0$ and with compact support in $x_1$. Then there exist  Gaussian beam quasimodes $v_s$ and $w_s$ given by Proposition \ref{prop_Gaussian_beams} such that $v_s$ is obtained using  amplitudes of the second type and 
\begin{equation}
\label{eq_prop_gaussian_psi}
\begin{aligned}
\lim_{h\to 0} \bigg[ h \int_\R e^{-2i\lambda x_1}\int_{M_0} (\nabla_g \psi) (v_s) \overline{w_s} c(x_1,x')^{\frac{n}{2}} dV_{g_0}dx_1\\
- \int_\R e^{-2\lambda x_1}\int_{M_0} (\nabla_g \psi)_1 v_s \overline{w_s} c(x_1,x')^{\frac{n}{2}} dV_{g_0}dx_1\bigg]\\
=\int_\R  \int_0^L e^{-2i\lambda (x_1-it)}  \psi(x_1,\gamma(t)) c(x_1,\gamma(t))
dtdx_1.
\end{aligned}
\end{equation}
\end{prop}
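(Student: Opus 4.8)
The plan is to localize along $\gamma$, extract the single term of order $h^{-1}$ in $(\nabla_g\psi)(v_s)$, run the concentration argument exactly as in the proof of Proposition~\ref{prop_computations_gaussian_1}, and then remove the amplitude of the second type by one integration by parts in the complex variable $z=x_1-it$. Throughout, $v_s=e^{is\varphi}a$ with $a=h^{-(n-2)/4}a_0(x_1,t)\chi(y/\delta')$ built from a second-type amplitude $a_0$, i.e.\ $A_0:=e^{\phi(x_1,t)}a_0(x_1,t)$ solves, by \eqref{eq_200_15}, $(\partial_{x_1}-i\partial_t)A_0=c(x_1,t,0)$; and $w_s=e^{is\varphi}b$ with $b=h^{-(n-2)/4}b_0(x_1,t)\chi(y/\delta')$, $b_0=e^{-\tilde\phi}$. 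As in Proposition~\ref{prop_computations_gaussian_1}, a partition of unity subordinate to \eqref{eq_open_cover_sup} reduces the claim to a single patch.

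I would first unwind $(\nabla_g\psi)(v_s)$ in Fermi coordinates. Since $g=c(e\oplus g_0)$, for any $f$ one has $(\nabla_g\psi)(f)=c^{-1}\big[(\partial_{x_1}\psi)(\partial_{x_1}f)+\langle\nabla_{g_0}\psi,\nabla_{g_0}f\rangle_{g_0}\big]$ and $(\nabla_g\psi)_1=c^{-1}\partial_{x_1}\psi$ (the $\partial_{x_1}$-component of the vector field $\nabla_g\psi$), with $\nabla_{g_0}$ and $\langle\cdot,\cdot\rangle_{g_0}$ acting in the transversal variable $x'$. As $\varphi=\varphi(t,y)$ is independent of $x_1$,
\[
(\nabla_g\psi)(v_s)=\frac{is}{c}\,\langle\nabla_{g_0}\psi,\nabla_{g_0}\varphi\rangle_{g_0}\,v_s+\frac{e^{is\varphi}}{c}\Big[(\partial_{x_1}\psi)(\partial_{x_1}a)+\langle\nabla_{g_0}\psi,\nabla_{g_0}a\rangle_{g_0}\Big].
\]
Only the first term carries the factor $s$: in the bracket, the contributions in which a derivative hits $\chi(y/\delta')$ are supported in $\{\delta'/4\le|y|\le\delta'/2\}$ and exponentially small by \eqref{eq_200_3_2}, while the remaining ones have the form $e^{is\varphi}h^{-(n-2)/4}(\text{a bounded function})\chi(y/\delta')$, hence, paired with $\overline{w_s}$ and integrated in $x'$, are $\mathcal{O}(1)$. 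Using $\|v_s(x_1,\cdot)\|_{L^2(M_0)},\|w_s(x_1,\cdot)\|_{L^2(M_0)}=\mathcal{O}(1)$ (cf.\ \eqref{eq_200_50}) and the compact support of $\psi$ in $x_1$, it follows that the quantity inside the limit in \eqref{eq_prop_gaussian_psi} equals
\[
\int_\R e^{-2i\lambda x_1}\int_{M_0}\Big(ihs\,c^{\frac n2-1}\langle\nabla_{g_0}\psi,\nabla_{g_0}\varphi\rangle_{g_0}-c^{\frac n2-1}\partial_{x_1}\psi\Big)v_s\overline{w_s}\,dV_{g_0}\,dx_1
\]
up to an error that tends to $0$ as $h\to0$.

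Next I would pass to the limit. One has $hs=1+ih\lambda\to1$, and by \eqref{eq_gauss_3_metric}, \eqref{eq_gauss_9}, $\langle\nabla_{g_0}\psi,\nabla_{g_0}\varphi\rangle_{g_0}|_{\gamma(t)}=\partial_t\psi(x_1,\gamma(t))$, so the coefficient above converges uniformly to $\Psi_0$ with $\Psi_0(x_1,\gamma(t))=c(x_1,\gamma(t))^{\frac n2-1}(i\partial_t-\partial_{x_1})\psi(x_1,\gamma(t))$. Running the argument behind \eqref{eq_gauss_26} verbatim --- the rescaling $y=h^{1/2}\tilde y$, dominated convergence, the Gaussian integral \eqref{eq_gauss_27}, the non-stationary phase step \eqref{eq_gauss_33}--\eqref{eq_gauss_37} to discard the mixed terms on the self-intersection patches, and a further dominated convergence in $x_1$ --- and then inserting, for the second-type amplitude $a_0=e^{-\phi}A_0$, the identity $a_0(x_1,t)\overline{b_0(x_1,t)}\dfrac{\pi^{(n-2)/2}}{\sqrt{\det(\text{Im}\,H(t))}}=c(x_1,t,0)^{1-\frac n2}A_0(x_1,t)$, which follows from the computation of \eqref{eq_gauss_26_100}--\eqref{eq_gauss_26_102} upon normalizing the constant there to $1$, I expect the limit in \eqref{eq_prop_gaussian_psi} to equal
\[
-\int_\R\int_0^L e^{-2i\lambda(x_1-it)}\,A_0(x_1,t)\,(\partial_{x_1}-i\partial_t)\psi(x_1,\gamma(t))\,dt\,dx_1 .
\]

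The last step is an integration by parts in $z=x_1-it$. Since $E(x_1,t):=e^{-2i\lambda(x_1-it)}$ satisfies $(\partial_{x_1}-i\partial_t)E=0$ and $A_0$ satisfies $(\partial_{x_1}-i\partial_t)A_0=c(x_1,t,0)$, writing $P(x_1,t):=\psi(x_1,\gamma(t))$ the Leibniz rule gives $(\partial_{x_1}-i\partial_t)(EA_0P)=EcP+EA_0(\partial_{x_1}-i\partial_t)P$. Integrating over $\R\times[0,L]$, the $\partial_{x_1}$-part integrates to zero by the compact support of $\psi$ in $x_1$, and the $\partial_t$-part leaves only the boundary values at $t=0$ and $t=L$, which vanish because $\gamma(0),\gamma(L)\in\p M_0$ and $\psi(x_1,\cdot)|_{\p M_0}=0$, so $P(x_1,0)=P(x_1,L)=0$. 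Hence $\int_\R\int_0^L EA_0(\partial_{x_1}-i\partial_t)P\,dt\,dx_1=-\int_\R\int_0^L EcP\,dt\,dx_1$, and combining this with the previous display produces exactly the right-hand side of \eqref{eq_prop_gaussian_psi}. The delicate part is the bookkeeping of the first two paragraphs --- isolating the single $\mathcal{O}(s)$ term of $(\nabla_g\psi)(v_s)$, estimating the remainder uniformly in $x_1$, and carrying the limit through the self-intersection patches --- but all of this parallels the proof of Proposition~\ref{prop_computations_gaussian_1}; once it is in place, the cancellation of $A_0$ against $c$ by the integration by parts is immediate.
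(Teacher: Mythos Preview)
Your proposal is correct and follows essentially the same approach as the paper: localize via the partition of unity, isolate the single $\mathcal{O}(s)$ term in $(\nabla_g\psi)(v_s)$ and pass to the limit exactly as in Proposition~\ref{prop_computations_gaussian_1} (including the non-stationary phase step on the self-intersection patches), simplify $a_0\overline{b_0}\,\pi^{(n-2)/2}/\sqrt{\det(\text{Im}\,H)}$ to $c^{1-n/2}e^{\phi}a_0$, and then integrate by parts in $(x_1,t)$ using $(\partial_{x_1}-i\partial_t)(e^\phi a_0)=c$ together with the boundary/support hypotheses on $\psi$. One small organizational remark: you say the partition of unity ``reduces the claim to a single patch'' but then perform the integration by parts over the full interval $[0,L]$; strictly speaking the limit is computed patchwise and the integration by parts is carried out after summing (or, equivalently, patchwise using that the localized $\psi$ vanishes at the relevant endpoints), exactly as the paper does implicitly.
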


\begin{proof}
In view of \eqref{eq_open_cover_sup}, using a partition of unity,  it suffices to check \eqref{eq_prop_gaussian_psi} for $\psi$ such that $\supp(\psi (x_1,\cdot))$ is in one of the sets $V_j$ or $W_k$. Let us first consider the case when $\supp(\psi (x_1,\cdot))\subset W_k$. Thus, on $\supp(\psi (x_1,\cdot))$, $v_s$ and $w_s$ are given by \eqref{eq_gauss_vs_ws} with $a_0$ being an amplitude of type two. To proceed, we note that 
\begin{equation}
\label{eq_prop_gaussian_psi_part_1}
\nabla_g \psi=\frac{1}{c}(\p_{x_1}\psi \p_{x_1}+g_0^{-1}\p_{x'}\psi\cdot\p_{x'}),
\end{equation}
and therefore, using \eqref{eq_gauss_3_metric}, we see that 
\begin{equation}
\label{eq_prop_gaussian_psi_part_1_1}
(\nabla \psi)_t(x_1,t,0)=\frac{\p_{t}\psi(x_1,t,0)}{c(x_1,t,0)}.
\end{equation}

Using \eqref{eq_gauss_vs_ws}, \eqref{eq_prop_gaussian_psi_part_1}, \eqref{eq_prop_gaussian_psi_part_1_1}, a computation similar to that in the proof of Proposition \ref{prop_computations_gaussian_1}, cf. \eqref{eq_gauss_26} and \eqref{eq_200_56}, gives 
\begin{equation}
\label{eq_prop_gaussian_psi_part_2}
\begin{aligned}
I=&\lim_{h\to 0} \bigg[ h \int_\R e^{-2i\lambda x_1}\int_{M_0} (\nabla_g \psi) (v_s) \overline{w_s} c(x_1,x')^{\frac{n}{2}} dV_{g_0}dx_1\\
&- \int_\R e^{-2\lambda x_1}\int_{M_0} (\nabla_g \psi)_1 v_s \overline{w_s} c(x_1,x')^{\frac{n}{2}}  dV_{g_0}dx_1\bigg]\\
&=-\int_\R  \int_0^L e^{-2i\lambda x_1} e^{-2\lambda t} ((\p_{x_1}-i\p_t)\psi(x_1,t,0))
a_0(x_1,t)\overline{b_0(x_1,t)}\\
&\frac{\pi^{(n-2)/2}}{\sqrt{\det(\text{Im} H(t))}}c(x_1,t,0)^{\frac{n}{2}-1}dtdx_1.
\end{aligned}
\end{equation}
When solving the equations \eqref{eq_200_12_0} and \eqref{eq_guass_type1_amplitude_tilde_0} for $G$ and $F$, respectively,  we choose the initial conditions $G(t_0)$ and $F(t_0)$ so that the constant in \eqref{eq_gauss_26_102} is equal to $1$. Then  using \eqref{eq_guass_type1_amplitude_tilde}, \eqref{eq_200_12_0}, \eqref{eq_gauss_26_102},  we see that 
\begin{equation}
\label{eq_prop_gaussian_psi_part_3}
\begin{aligned}
a_0(x_1,t)\overline{b_0(x_1,t)}\frac{\pi^{(n-2)/2}}{\sqrt{\det(\text{Im} H(t))}}c(x_1,t,0)^{\frac{n}{2}-1}
\\
=a_0(x_1,t)c(x_1,t,0)^{\frac{n}{4}-\frac{1}{2}}e^{-\overline{F(t)}}\frac{\pi^{(n-2)/2}}{\sqrt{\det(\text{Im} H(t))}}\\
=a_0(x_1,t)c(x_1,t,0)^{\frac{n}{4}-\frac{1}{2}}e^{G(t)}=a_0(x_1,t)e^{\phi(x_1,t)}
\end{aligned}
\end{equation}
Combining \eqref{eq_prop_gaussian_psi_part_2} and \eqref{eq_prop_gaussian_psi_part_3}, integrating by parts, using the fact that $\psi$ compact support in $x_1$ and $\psi(x_1,\cdot)|_{\p M_0}=0$, and using \eqref{eq_200_15},  we get 
\begin{equation}
\label{eq_prop_gaussian_psi_part_2_1}
\begin{aligned}
I=&-\int_\R  \int_0^L e^{-2i\lambda (x_1-it)} ((\p_{x_1}-i\p_t)\psi(x_1,t,0))
a_0(x_1,t)e^{\phi(x_1,t)}dtdx_1\\
=&\int_\R  \int_0^L e^{-2i\lambda (x_1-it)}  \psi(x_1,t,0)(\p_{x_1}-i\p_t) (a_0(x_1,t)e^{\phi(x_1,t)})
dtdx_1\\
=&\int_\R  \int_0^L e^{-2i\lambda (x_1-it)}  \psi(x_1,t,0)c(x_1,t,0)
dtdx_1.
\end{aligned}
\end{equation}
This completes the proof of \eqref{eq_prop_gaussian_psi} in the case when  $\supp(\psi (x_1,\cdot))\subset W_k$.

Let us know show  \eqref{eq_prop_gaussian_psi} when  $\supp(\psi (x_1,\cdot))\subset V_j$. Then on $\supp(\psi)$, $v_s$ and $w_s$ are given by \eqref{eq_gaus_v_s_w_s_sum_-10}, and we have
\begin{equation}
\label{eq_prop_gaussian_psi_3_1}
\begin{aligned}
&\int_\R e^{-2i\lambda x_1}\int_{M_0} (h(\nabla_g \psi) (v_s)  -(\nabla_g \psi)_1 v_s) \overline{w_s} c(x_1,x')^{\frac{n}{2}} dV_{g_0}dx_1\\
&= \sum_{l:\gamma(t_l)=p_j} \int_\R e^{-2i\lambda x_1}\int_{M_0} (h(\nabla_g \psi) (v_s^{(l)})  -(\nabla_g \psi)_1 v_s^{(l)}) \overline{w_s^{(l)}} c(x_1,x')^{\frac{n}{2}} dV_{g_0}dx_1+\\
&\sum_{l\ne l':\gamma(t_l)=\gamma(t_{l'})=p_j}\int_\R e^{-2i\lambda x_1}\int_{M_0} (h(\nabla_g \psi) (v_s^{(l)})  -(\nabla_g \psi)_1 v_s^{(l)}) \overline{w_s^{(l')}} c(x_1,x')^{\frac{n}{2}} dV_{g_0}dx_1.
\end{aligned}
\end{equation}
Now when $l\ne l'$, as in \eqref{eq_gauss_33} and \eqref{eq_200_59_0}, by non-stationary phase argument we see that 
\[
\lim_{h\to 0}\int_{M_0} (h(\nabla_g \psi) (v_s^{(l)})  -(\nabla_g \psi)_1 v_s^{(l)}) \overline{w_s^{(l')}} c(x_1,x')^{\frac{n}{2}} dV_{g_0}=0,
\]
uniformly in $x_1$, and therefore, the limit $h\to 0$ of the second sum in \eqref{eq_prop_gaussian_psi_3_1} is equal to $0$. 
Hence, 
\begin{align*}
&\lim_{h\to 0}\int_\R e^{-2i\lambda x_1}\int_{M_0} (h(\nabla_g \psi) (v_s)  -(\nabla_g \psi)_1 v_s) \overline{w_s} c(x_1,x')^{\frac{n}{2}} dV_{g_0}dx_1\\
&= \sum_{l:\gamma(t_l)=p_j}\int_\R  \int_{I_l} e^{-2i\lambda (x_1-it)}  \psi(x_1,t,0)c(x_1,t,0)
dtdx_1,
\end{align*}
showing \eqref{eq_prop_gaussian_psi} when  $\supp(\psi (x_1,\cdot))\subset V_j$. 
\end{proof}

\section{Construction of complex geometric optics solutions based on Gaussian beams quasimodes}
\label{sec_CGO}

Let $(M,g)$ be a conformally transversally anisotropic manifold so that $(M,g)\subset\subset (\R\times M_0^{\text{int}}, c(e\oplus g_0))$.  Let $X,Y\in L^\infty(M,TM)$ be complex vector fields, and let $q\in L^\infty(M,\C)$. Consider the following operator, 
\begin{equation}
\label{eq_100_1}
P_{X,Y,q}=(-\Delta_g)^2+X+\div(Y)+q. 
\end{equation}
Note that the operator $P_{X,Y,q}$ comprises both the operator $L_{X,q}$ as well as  its formal adjoint $L^*_{X,q}=(-\Delta_g)^2-\overline{X}-\div(\overline{X})+\overline{q}$.
Here $\div(Y)\in H^{-1}(M^{\text{int}})$ is given by 
\begin{equation}
\label{eq_100_1_1}
\langle \div(Y), \varphi\rangle_{M^{\text{int}}}:=-\int Y(\varphi)dV, \quad \varphi\in C^\infty_0(M^{\text{int}}),
\end{equation}
where $\langle \cdot,\cdot\rangle_{M^{\text{int}}}$ is a distributional duality on $M^{\text{int}}$. We shall also view $\div(Y)$ as multiplication operator,
\begin{equation}
\label{eq_100_1_2}
\div(Y): C_0^\infty(M^{\text{int}})\to H^{-1}(M^{\text{int}}).
\end{equation}
Therefore, it follows from \eqref{eq_100_1} that 
\[
P_{X,Y,q}:C_0^\infty(M^{\text{int}})\to H^{-1}(M^{\text{int}}).
\]
In this section, we will construct complex geometric optics solutions to the equation $P_{X,Y,q}u=0$ in $M$ based on the Gaussian beams quasimodes for the conjugated biharmonic operator, constructed in Section \ref{sec_Gaussian_beams}.

Assume, as we may,  that $(M,g)$ is embedded in a compact smooth manifold $(N,g)$ without boundary of the same dimension, and let $U$ be open in $N$ such that $M\subset U$.  Our starting point is the following Carleman estimates for $-h^2\Delta_g$ with a gain of two derivatives, established in \cite{KK_2018}, see also \cite{DKSaloU_2009} and \cite{Salo_Tzou_2009}.

\begin{prop}
Let $\phi$ be a limiting Carleman weight for $-h^2\Delta_g$ on  $U$. Then for all $0<h\ll 1$ and $t\in \R$, we have
\begin{equation}
\label{eq_Car_for_laplace}
h\|u\|_{H^{t+2}_{\emph{\text{scl}}}(N)}\le C\|e^{\frac{\phi }{h}}(-h^2\Delta_g)e^{-\frac{\phi}{h}}  u\|_{H^{t}_{\emph{\text{scl}}}(N)}, \quad C>0,
\end{equation}
for all $u\in C_0^\infty(M^\text{int})$. 
\end{prop}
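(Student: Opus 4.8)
The plan is to deduce the estimate from the basic semiclassical Carleman estimate for $-h^2\Delta_g$ with a limiting Carleman weight, combined with semiclassical elliptic regularity on the closed manifold $N$. Write $P_\phi=e^{\phi/h}(-h^2\Delta_g)e^{-\phi/h}$; a direct computation gives
\begin{equation*}
P_\phi u=-h^2\Delta_g u+2\langle\nabla_g\phi,h\nabla_g u\rangle_g+h(\Delta_g\phi)u-|\nabla_g\phi|_g^2u ,
\end{equation*}
so that $P_\phi=A+iB$ where $A=-h^2\Delta_g-|\nabla_g\phi|_g^2$ is the self-adjoint part and $iB=2\langle\nabla_g\phi,h\nabla_g\cdot\rangle_g+h\Delta_g\phi$ the skew-adjoint part of $P_\phi$ on $L^2(N,dV_g)$. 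The first ingredient is the basic estimate: for $u\in C_0^\infty(M^{\text{int}})$, $0<h\le1$ and $t\in\R$,
\begin{equation*}
\frac hC\,\|u\|_{H^{t+1}_{\text{scl}}(N)}\le \|P_\phi u\|_{H^{t}_{\text{scl}}(N)} .
\end{equation*}
For $t=0$ this is the estimate of \cite{DKSaloU_2009}, whose proof I would recall only schematically, it being the one genuinely substantial point: one expands $\|P_\phi u\|_{L^2}^2=\|Au\|_{L^2}^2+\|Bu\|_{L^2}^2+\langle i[A,B]u,u\rangle$, notes that $h^{-1}i[A,B]$ is self-adjoint with semiclassical principal symbol equal, up to sign, to the Poisson bracket of $\sigma(A)$ and $\sigma(B)$, and invokes the defining property of a limiting Carleman weight, which — after the standard convexification of $\phi$ — makes the relevant bracket strictly positive on the joint characteristic set of $\sigma(A)$ and $\sigma(B)$; the sharp G\aa{}rding inequality then yields the quantitative lower bound, and since the convexified and original weights differ by $O(h)$ the exponential factors are comparable and the estimate transfers back to $\phi$. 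The case of general $t$ follows by conjugating with $\langle hD\rangle^t$ and absorbing the commutator $[\langle hD\rangle^t,P_\phi]$, which is of one lower order and carries an extra factor of $h$, or may be quoted from \cite{Salo_Tzou_2009}.

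I would then upgrade to the gain of two derivatives by a short bootstrap. Semiclassical elliptic regularity for $-h^2\Delta_g$ on the closed manifold $N$ gives, uniformly in $h$,
\begin{equation*}
\|u\|_{H^{t+2}_{\text{scl}}(N)}\le C\big(\|(-h^2\Delta_g)u\|_{H^{t}_{\text{scl}}(N)}+\|u\|_{H^{t}_{\text{scl}}(N)}\big) .
\end{equation*}
Substituting $-h^2\Delta_g u=P_\phi u-2\langle\nabla_g\phi,h\nabla_g u\rangle_g-h(\Delta_g\phi)u+|\nabla_g\phi|_g^2u$ and bounding the first order term by $C\|u\|_{H^{t+1}_{\text{scl}}(N)}$ yields $\|u\|_{H^{t+2}_{\text{scl}}(N)}\le C\|P_\phi u\|_{H^{t}_{\text{scl}}(N)}+C\|u\|_{H^{t+1}_{\text{scl}}(N)}$. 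The interpolation inequality $\|u\|_{H^{t+1}_{\text{scl}}}\le\delta\|u\|_{H^{t+2}_{\text{scl}}}+C_\delta\|u\|_{H^{t}_{\text{scl}}}$, valid uniformly in $h$ with $\delta$ fixed and small, absorbs the middle term and leaves $\|u\|_{H^{t+2}_{\text{scl}}(N)}\le C\|P_\phi u\|_{H^{t}_{\text{scl}}(N)}+C\|u\|_{H^{t}_{\text{scl}}(N)}$. Finally, $\|u\|_{H^{t}_{\text{scl}}(N)}\le\|u\|_{H^{t+1}_{\text{scl}}(N)}\le Ch^{-1}\|P_\phi u\|_{H^{t}_{\text{scl}}(N)}$ by the basic estimate, hence $\|u\|_{H^{t+2}_{\text{scl}}(N)}\le Ch^{-1}\|P_\phi u\|_{H^{t}_{\text{scl}}(N)}$, which is the assertion after rewriting $P_\phi u=e^{\phi/h}(-h^2\Delta_g)e^{-\phi/h}u$.

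The only genuinely substantial input is thus the basic Carleman estimate — the convexification plus sharp G\aa{}rding argument extracting a quantitative subelliptic bound near the characteristic set from the limiting-Carleman-weight condition; granting that, the step carried out above is elementary, the one point requiring care being the $h$-bookkeeping, namely that the errors produced by elliptic regularity and by the first order part of $P_\phi$ cost at most the single power $h^{-1}$ already present in the basic estimate. A minor additional point is the reduction to general $t\in\R$: since $\langle hD\rangle^t u$ is no longer compactly supported in $M^{\text{int}}$, one inserts a cutoff supported in $U$ and absorbs the resulting commutator errors with the semiclassical pseudodifferential calculus.
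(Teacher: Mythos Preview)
The paper does not prove this proposition; it states it as a known result, citing \cite{KK_2018} (where the two-derivative gain is established) together with \cite{DKSaloU_2009} and \cite{Salo_Tzou_2009}. Your sketch is correct and is precisely the argument carried out in \cite{KK_2018}: the one-derivative Carleman estimate of \cite{DKSaloU_2009}, \cite{Salo_Tzou_2009} is combined with semiclassical elliptic regularity for $-h^2\Delta_g$ on the closed manifold $N$ to upgrade to a two-derivative gain, and the bookkeeping you describe (absorbing the first order part of $P_\phi$ via interpolation, then controlling the remaining $\|u\|_{H^t_{\text{scl}}}$ by the basic estimate at the cost of the single factor $h^{-1}$) is exactly how the argument goes.
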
 
Here $H^t(N)$, $t\in\R$, is the standard  Sobolev space, equipped with the natural semiclassical norm,
\[
\|u\|_{H^t_{\text{scl}}(N)}=\|(1-h^2\Delta_g)^{\frac{t}{2}} u\|_{L^2(N)}. 
\]
Iterating  \eqref{eq_Car_for_laplace}, we get the following Carleman estimates for $(-h^2\Delta_g)^2$, for $0<h\ll 1$ and $t\in \R$, 
\begin{equation}
\label{eq_Car_for_biharm}
h^2\|u\|_{H^{t+4}_{\emph{\text{scl}}}(N)}\le C\|e^{\frac{\phi }{h}}(-h^2\Delta_g)^2e^{-\frac{\phi}{h}}  u\|_{H^{t}_{\emph{\text{scl}}}(N)}, \quad C>0,
\end{equation}
for all $u\in C_0^\infty(M^\text{int})$.

To construct complex geometric optics solutions for $P_{X,Y,q}u=0$, we shall need the following Carleman estimates for the operator $P_{X,Y,q}$.
\begin{prop}
Let $\phi$ be a limiting Carleman weight for $-h^2\Delta_g$ on  $U$. Then for all $0<h\ll 1$, we have
\begin{equation}
\label{eq_100_2}
h^2\|u\|_{H^1_{\text{scl}}(N)}\le C\|e^{\frac{\phi }{h}}(h^4P_{X,Y,q})e^{-\frac{\phi}{h}}  u \|_{H^{-3}_{\text{scl}}(N)}, \quad C>0,
\end{equation}
for all $u\in C^\infty_0(M^{\text{int}})$.
\end{prop}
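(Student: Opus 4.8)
The plan is to treat $X+\div(Y)+q$ as a perturbation of the biharmonic operator and absorb it into the left-hand side of the Carleman estimate \eqref{eq_Car_for_biharm}, which is precisely where the gain of two derivatives enters. Taking $t=-3$ in \eqref{eq_Car_for_biharm} gives, for all $u\in C_0^\infty(M^{\text{int}})$ and $0<h\ll 1$,
\[
h^2\|u\|_{H^1_{\text{scl}}(N)}\le C\|e^{\frac{\phi}{h}}(-h^2\Delta_g)^2 e^{-\frac{\phi}{h}}u\|_{H^{-3}_{\text{scl}}(N)}.
\]
Since $h^4P_{X,Y,q}=(-h^2\Delta_g)^2+h^4(X+\div(Y)+q)$, it then suffices to prove the perturbative bound
\[
\|h^4 e^{\frac{\phi}{h}}(X+\div(Y)+q)e^{-\frac{\phi}{h}}u\|_{H^{-3}_{\text{scl}}(N)}\le Ch^3\|u\|_{H^1_{\text{scl}}(N)}.
\]
Indeed, combining this with the previous display via the triangle inequality yields $h^2\|u\|_{H^1_{\text{scl}}(N)}\le C\|e^{\frac{\phi}{h}}(h^4P_{X,Y,q})e^{-\frac{\phi}{h}}u\|_{H^{-3}_{\text{scl}}(N)}+Ch^3\|u\|_{H^1_{\text{scl}}(N)}$, and the last term is absorbed into the left for $0<h\ll 1$, which is \eqref{eq_100_2}.

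To prove the perturbative bound I would first carry out the conjugation: since $\phi=\pm x_1$ is smooth, $e^{\frac{\phi}{h}}Xe^{-\frac{\phi}{h}}=X-\tfrac{1}{h} X(\phi)$ with $X(\phi)\in L^\infty(M)$, while $\div(Y)$ and $q$, being multiplication operators, commute with $e^{\pm\phi/h}$ and are unchanged. Hence
\[
h^4 e^{\frac{\phi}{h}}(X+\div(Y)+q)e^{-\frac{\phi}{h}}u=h^4X(u)-h^3X(\phi)u+h^4\div(Y)u+h^4qu,
\]
and it remains to bound each term in $H^{-3}_{\text{scl}}(N)$ using the routine mapping properties of semiclassical Sobolev spaces on $N$: multiplication by an $L^\infty$ function is bounded on $L^2=H^0_{\text{scl}}$, the inclusion $H^0_{\text{scl}}(N)\hookrightarrow H^{-3}_{\text{scl}}(N)$ is norm-nonincreasing, and, writing $\partial=h^{-1}(h\partial)$, each derivative costs a single power of $h^{-1}$ because $h\partial\colon H^s_{\text{scl}}(N)\to H^{s-1}_{\text{scl}}(N)$ is bounded uniformly in $h$ while $\|h\partial u\|_{L^2}\le\|u\|_{H^1_{\text{scl}}(N)}$. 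Thus $\|X(u)\|_{H^{-3}_{\text{scl}}(N)}\le Ch^{-1}\|u\|_{H^1_{\text{scl}}(N)}$ and $\|X(\phi)u\|_{H^{-3}_{\text{scl}}(N)}+\|qu\|_{H^{-3}_{\text{scl}}(N)}\le C\|u\|_{L^2}$, so that after multiplying by the displayed powers of $h$ these three terms contribute $O(h^3)\|u\|_{H^1_{\text{scl}}(N)}$.

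The only term needing a little care is $\div(Y)u$, whose coefficient has negative regularity; here one uses the identity $\div(Y)u=\div(uY)-Y(u)$, valid in $H^{-1}(M^{\text{int}})$ for $u\in H^1(M^{\text{int}})$. The term $Y(u)$ is estimated exactly as $X(u)$, and since $uY\in L^2$ (product of $u\in H^1\hookrightarrow L^2$ with $Y\in L^\infty$), the term $\div(uY)=\sum_j\partial_j(uY^j)$ has $H^{-3}_{\text{scl}}(N)$ norm at most $Ch^{-1}\|u\|_{L^2}$, again by moving the derivative onto $uY$ at the cost of one power of $h^{-1}$; multiplying by $h^4$ produces once more an $O(h^3)\|u\|_{H^1_{\text{scl}}(N)}$ contribution. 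I do not expect a genuine obstacle: the argument is bookkeeping of powers of $h$, and the decisive observation is that multiplying a first order operator by $h^4$ and measuring its output in $H^{-3}_{\text{scl}}$ rather than $H^{-1}_{\text{scl}}$ gains a full power of $h$ over the $h^2$ on the left of \eqref{eq_Car_for_biharm} — which is exactly what the two-derivative gain buys.
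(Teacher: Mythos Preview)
Your proof is correct and follows essentially the same approach as the paper: take $t=-3$ in \eqref{eq_Car_for_biharm}, conjugate the first-order perturbation, and absorb the resulting $\mathcal{O}(h^3)\|u\|_{H^1_{\text{scl}}}$ terms. The only cosmetic difference is in the $\div(Y)$ term: the paper argues by duality via $\langle \div(Y)u,\psi\rangle=-\int Y(u\psi)\,dV_g$ and then Leibniz, whereas you write $\div(Y)u=\div(uY)-Y(u)$ first and estimate the pieces; unpacking the paper's duality computation yields exactly your two terms, so the arguments coincide.
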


\begin{proof}
First letting $t=-3$ in \eqref{eq_Car_for_biharm}, we get for all $0<h\ll 1$, 
\begin{equation}
\label{eq_100_3}
h^2\|u\|_{H^{1}_{\emph{\text{scl}}}(N)}\le C\|e^{\frac{\phi }{h}}(-h^2\Delta_g)^2e^{-\frac{\phi}{h}}  u\|_{H^{-3}_{\emph{\text{scl}}}(N)}, 
\end{equation}
for all $u\in C^\infty_0(M^{\text{int}})$. We also have 
\begin{equation}
\label{eq_100_4}
\|e^{\frac{\phi }{h}} h^4 X(e^{-\frac{\phi}{h}}  u)\|_{H^{-3}_{\emph{\text{scl}}}(N)}\le \|h^4X(u)-h^3X(\phi)u\|_{L^2(N)}=\mathcal{O}(h^3)\|u\|_{H^{1}_{\emph{\text{scl}}}(N)}.
\end{equation}
In order to estimate $\|h^4\div(Y)u \|_{H^{-3}_{\text{scl}}(N)}$,  we shall use the following characterization of the semiclassical norm in the Sobolev space $H^{-3}(N)$, 
\[
\|v\|_{H^{-3}_{\text{scl}}(N)}=\sup_{0\ne \psi\in C^\infty(N)}\frac{|\langle v,\psi\rangle_N|}{\| \psi\|_{H^3_{\text{scl}}(N)}}. 
\]
Using \eqref{eq_100_1_1}, for $0\ne \psi\in C^\infty(N)$, we get 
\[
|\langle h^4e^{\frac{\phi}{h}}\div(Y)e^{-\frac{\phi}{h}} u,\psi\rangle_N|\le \int_N h^4|Y(u\psi)|dV\le \mathcal{O}(h^3)\|u\|_{H^{1}_{\text{scl}}(N)}\|\psi\|_{H^{3}_{\text{scl}}(N)},
\]
and therefore, 
\begin{equation}
\label{eq_100_5}
\|h^4\div(Y)u \|_{H^{-3}_{\text{scl}}(N)}\le \mathcal{O}(h^3)\|u\|_{H^{1}_{\text{scl}}(N)}.
\end{equation}
Finally, we have
\begin{equation}
\label{eq_100_6}
\|h^4qu\|_{H^{-3}_{\text{scl}}(N)}\le \mathcal{O}(h^4)\|u\|_{H^{1}_{\text{scl}}(N)}.
\end{equation}
Combining \eqref{eq_100_3}, \eqref{eq_100_4}, \eqref{eq_100_5}, and \eqref{eq_100_6}, we obtain \eqref{eq_100_2}, for all $0<h\ll 1$ and $u\in C^\infty_0(M^{\text{int}})$.
\end{proof}

Note that the formal $L^2$ adjoint of $P_{X,Y,q}$ is given by $P_{-\overline{X}, -\overline{X}+\overline{Y},\overline{q}}$. Using the fact that  if $\phi$ is a limiting Carleman weight then so is $-\phi$, we obtain the following solvability result, see  \cite{DKSaloU_2009} and \cite{Krup_Uhlmann_magnet} for the details. 
\begin{prop}
\label{prop_solvability}
Let $X, Y\in L^\infty(M,TM)$ be complex vector fields, and  let $q\in L^\infty(M, \C)$. Let  $\phi$ be a limiting Carleman weight for $-h^2\Delta_g$ on $(U,g)$. 
If  $h>0$ is small enough, then for any $v\in H^{-1}(M^{\text{int}})$, there is a solution $u\in H^3(M^{\text{int}})$ of the equation 
\[
e^{\frac{\phi}{h}}(h^4P_{X,Y,q})e^{-\frac{\phi}{h}}  u=v \quad \text{in}\quad M^{\text{int}},
\]
which satisfies 
\[
\|u\|_{H^3_{\emph{\text{scl}}}(M^{\text{int}})}\le \frac{C}{h^2} \|v\|_{H^{-1}_{\emph{\text{scl}}}(M^{\text{int}})}.
\]
\end{prop}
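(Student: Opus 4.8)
The plan is to obtain existence of $u$ by a duality (Hahn--Banach / Riesz) argument built on the Carleman estimate \eqref{eq_100_2}, applied not to $P_{X,Y,q}$ itself but to its formal adjoint. First I would identify the relevant adjoint operator: a direct integration by parts shows that the formal $L^2$ adjoint of $P_{X,Y,q}=(-\Delta_g)^2+X+\div(Y)+q$ is $P_{-\overline{X},\,-\overline{X}+\overline{Y},\,\overline{q}}$, since $X^*=-\overline{X}-\div(\overline{X})$ and $(\div Y)^*$ acts as $-\overline{Y}$ paired the other way; the biharmonic part is self-adjoint. The key point is that since $\phi$ is a limiting Carleman weight for $-h^2\Delta_g$, so is $-\phi$ (this is part of the definition), and hence the Carleman estimate \eqref{eq_100_2} applies verbatim with $\phi$ replaced by $-\phi$ and with $P_{X,Y,q}$ replaced by its adjoint:
\begin{equation}
\label{eq_solvability_adjoint_Car}
h^2\|w\|_{H^1_{\text{scl}}(N)}\le C\big\|e^{-\frac{\phi}{h}}(h^4 P_{-\overline{X},-\overline{X}+\overline{Y},\overline{q}})e^{\frac{\phi}{h}}w\big\|_{H^{-3}_{\text{scl}}(N)},\qquad w\in C_0^\infty(M^{\text{int}}).
\end{equation}

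Next I would set up the functional-analytic argument. Write $L_\phi=e^{\frac{\phi}{h}}(h^4 P_{X,Y,q})e^{-\frac{\phi}{h}}$ and let $L_\phi^t=e^{-\frac{\phi}{h}}(h^4 P_{-\overline{X},-\overline{X}+\overline{Y},\overline{q}})e^{\frac{\phi}{h}}$ be the transpose, so that $\langle L_\phi u,\overline{w}\rangle = \langle u, \overline{L_\phi^t w}\rangle$ for $u,w\in C_0^\infty(M^{\text{int}})$. On the subspace $L_\phi^t\big(C_0^\infty(M^{\text{int}})\big)\subset H^{-3}_{\text{scl}}(N)$ define the linear functional $\ell(L_\phi^t w)=\langle w, v\rangle_{M^{\text{int}}}$; this is well defined because \eqref{eq_solvability_adjoint_Car} shows $L_\phi^t$ is injective on $C_0^\infty(M^{\text{int}})$, and it is bounded in the $H^{-3}_{\text{scl}}$ norm since, by \eqref{eq_solvability_adjoint_Car} and duality between $H^3_{\text{scl}}$ and $H^{-3}_{\text{scl}}$,
\[
|\langle w,v\rangle_{M^{\text{int}}}|\le \|w\|_{H^3_{\text{scl}}}\|v\|_{H^{-3}_{\text{scl}}}\le \frac{C}{h^2}\|L_\phi^t w\|_{H^{-3}_{\text{scl}}}\|v\|_{H^{-3}_{\text{scl}}}.
\]
Wait --- here I must be slightly careful: \eqref{eq_solvability_adjoint_Car} bounds the $H^1_{\text{scl}}$ norm of $w$, not the $H^3_{\text{scl}}$ norm, so in order to pair against $v\in H^{-1}(M^{\text{int}})$ I pair $\langle w,v\rangle$ and estimate $|\langle w,v\rangle|\le \|w\|_{H^1_{\text{scl}}}\|v\|_{H^{-1}_{\text{scl}}}\le \frac{C}{h^2}\|L_\phi^t w\|_{H^{-3}_{\text{scl}}}\|v\|_{H^{-1}_{\text{scl}}}$. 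By the Hahn--Banach theorem $\ell$ extends to a bounded functional on $H^{-3}_{\text{scl}}(N)$ of norm $\le \frac{C}{h^2}\|v\|_{H^{-1}_{\text{scl}}}$, hence by Riesz representation there is $u\in H^{3}_{\text{scl}}(N)$ (the dual of $H^{-3}_{\text{scl}}$) with $\|u\|_{H^3_{\text{scl}}}\le \frac{C}{h^2}\|v\|_{H^{-1}_{\text{scl}}}$ and $\langle L_\phi^t w, u\rangle=\langle w,v\rangle$ for all $w\in C_0^\infty(M^{\text{int}})$. Unwinding the transpose, this says exactly $L_\phi u=v$ in the sense of distributions on $M^{\text{int}}$, i.e. $e^{\frac{\phi}{h}}(h^4 P_{X,Y,q})e^{-\frac{\phi}{h}}u=v$, and restricting to $M^{\text{int}}$ gives the claimed solution with the stated bound.

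The main obstacle, and the point requiring the most care, is bookkeeping of the function spaces and of the adjoint: one must make sure the Carleman estimate is applied to the correct operator (the transpose $L_\phi^t$ rather than $L_\phi$), that the regularity indices match so that the pairing $\langle w,v\rangle$ makes sense for $v\in H^{-1}$ while the a priori estimate controls $\|w\|_{H^1_{\text{scl}}}$ from below by the $H^{-3}_{\text{scl}}$ norm of $L_\phi^t w$, and that the conjugated lower-order terms $X$, $\div(Y)$, $q$ are genuinely controlled --- but this last point is already done, since these estimates are precisely \eqref{eq_100_4}, \eqref{eq_100_5}, \eqref{eq_100_6} in the proof of \eqref{eq_100_2}, and the same estimates hold for the adjoint operator since its coefficients $-\overline{X}$, $-\overline{X}+\overline{Y}$, $\overline{q}$ lie in the same spaces $L^\infty(M,TM)$ and $L^\infty(M,\C)$. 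No genuinely new analytic input is needed beyond \eqref{eq_100_2} and the elementary duality argument; the references \cite{DKSaloU_2009}, \cite{Krup_Uhlmann_magnet} contain the analogous construction in closely related settings.
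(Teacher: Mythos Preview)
Your proposal is correct and matches the paper's approach exactly: the paper does not give a proof but simply notes that the formal $L^2$ adjoint of $P_{X,Y,q}$ is $P_{-\overline{X},-\overline{X}+\overline{Y},\overline{q}}$, that $-\phi$ is again a limiting Carleman weight, and refers to \cite{DKSaloU_2009}, \cite{Krup_Uhlmann_magnet} for the standard Hahn--Banach/Riesz duality argument you have written out. One small notational slip: in the bilinear duality argument you should work with the \emph{transpose} $L_\phi^t=e^{-\phi/h}(h^4 P_{-X,-X+Y,q})e^{\phi/h}$ rather than the $L^2$ adjoint (the two differ by complex conjugation of the coefficients), but since the Carleman estimate \eqref{eq_100_2} holds for arbitrary $L^\infty$ coefficients this is immaterial and the argument goes through unchanged.
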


Let 
\[
s=\mu+i\lambda, \quad 1\le \mu=\frac{1}{h}, \quad \lambda\in \R, \quad \lambda\quad\text{fixed}. 
\]
We shall construct complex geometric optics solutions to the equation 
\begin{equation}
\label{eq_100_7}
P_{X,Y,q}u=0\quad \text{in}\quad M^{\text{int}}
\end{equation}
of the form
\begin{equation}
\label{eq_100_8}
u=e^{-sx_1}(v_s+r_s),
\end{equation}
where $v_s$ is a Gaussian beam quasimode for $(-h^2\Delta_g)^2$, constructed in Proposition \ref{prop_Gaussian_beams}. 
Thus, $u$ is a solution to \eqref{eq_100_7} provided that 
\begin{equation}
\label{eq_100_9}
\begin{aligned}
e^{sx_1}h^4P_{X,Y,q}e^{-sx_1}r_s=&-e^{sx_1}h^4P_{X,Y,q}e^{-sx_1}v_s=-e^{sx_1}(-h^2\Delta_g)^2e^{-sx_1}v_s
\\
&- e^{sx_1}h^4X(e^{-sx_1}v_s)-e^{sx_1}h^4\div(Y)(e^{-sx_1}v_s)-h^4qv_s=:F.
\end{aligned}
\end{equation}
Let us estimates the terms in the right hand side of \eqref{eq_100_9} in $H^{-1}_{\text{scl}}(M^{\text{int}})$. First, it follows from \eqref{eq_prop_gaussian_1} that 
\begin{equation}
\label{eq_100_10_0}
\| e^{sx_1}(-h^2\Delta_g)^2 e^{-sx_1}v_s\|_{H^{-1}_{\text{scl}}(M^{\text{int}})}\le\| e^{sx_1}(-h^2\Delta_g)^2 e^{-sx_1}v_s\|_{L^2(M)}=\mathcal{O}(h^{5/2}),
\end{equation}
and
\begin{equation}
\label{eq_100_10}
\| e^{sx_1}h^4X(e^{-sx_1}v_s)\|_{H^{-1}_{\text{scl}}(M^{\text{int}})}\le \| h^4 X(v_s)-h^4s X(x_1)v_s\|_{L^2(M)}=\mathcal{O}(h^3).
\end{equation}
Letting $0\ne\rho\in C^\infty_0(M^{\text{int}})$ and using \eqref{eq_100_1_1}, we obtain that 
\begin{align*}
|\langle e^{sx_1}h^4\div(Y)&(e^{-sx_1}v_s),\rho\rangle_{M^{\text{int}}}|\le  h^4\int |Y(v_s\rho)|dV\\
&=\mathcal{O}(h^3)\|v_s\|_{H^1_{\text{scl}}(M^{\text{int}})}\|\rho\|_{H^1_{\text{scl}}(M^{\text{int}})}
=\mathcal{O}(h^3)\|\rho\|_{H^1_{\text{scl}}(M^{\text{int}})},
\end{align*}
and therefore, 
\begin{equation}
\label{eq_100_11}
\| e^{sx_1}h^4\div(Y)(e^{-sx_1}v_s)\|_{H^{-1}_{\text{scl}}(M^{\text{int}})}=\mathcal{O}(h^3).
\end{equation}
We also have 
\begin{equation}
\label{eq_100_12}
\|h^4qv_s\|_{H^{-1}_{\text{scl}}(M^{\text{int}})}=\mathcal{O}(h^4). 
\end{equation}
Using \eqref{eq_100_10_0}, \eqref{eq_100_10}, \eqref{eq_100_11}, \eqref{eq_100_12}, we get from \eqref{eq_100_9}  that $\|F\|_{H^{-1}_{\text{scl}}(M^{\text{int}})}=\mathcal{O}(h^{5/2})$. An application of Proposition \ref{prop_solvability} to  \eqref{eq_100_9} gives that for all $h>0$ small enough,  there exists  $r_s\in H^3(M^{\text{int}})$ such that 
$\|r_s\|_{H^3_{\emph{\text{scl}}}(M^{\text{int}})}=\mathcal{O}(h^{1/2})$. To summarize, we prove the following result. 
\begin{prop}
\label{prop_CGO}
Let $X, Y\in L^\infty(M,TM)$ be complex vector fields, and  let $q\in L^\infty(M, \C)$. Let $s=\frac{1}{h}+i\lambda$ with $\lambda\in \R$ being fixed. For all $h>0$ small enough, there is a solution $u_1\in H^3(M^{\text{int}})$ of $P_{X,Y,q}u_1=0$ in $M^{\text{int}}$ having the form
\[
u_1=e^{-sx_1}(v_s+r_1),
\]
where $v_s\in C^\infty(M)$ is the Gaussian beam quasimode given in Proposition \ref{prop_Gaussian_beams} and $r_1\in H^3(M^{\text{int}})$ such that 
$\|r_1\|_{H^3_{\emph{\text{scl}}}(M^{\text{int}})}=\mathcal{O}(h^{1/2})$ as $h\to 0$. 

Similarly, for all $h>0$ small enough, there is a solution $u_2\in H^3(M^{\text{int}})$ of $P_{X,Y,q}u_2=0$ in $M^{\text{int}}$ having the form
\[
u_2=e^{sx_1}(w_s+r_2),
\]
where $w_s\in C^\infty(M)$ is the Gaussian beam quasimode given in Proposition \ref{prop_Gaussian_beams} and $r_2\in H^3(M^{\text{int}})$ such that 
$\|r_2\|_{H^3_{\emph{\text{scl}}}(M^{\text{int}})}=\mathcal{O}(h^{1/2})$ as $h\to 0$. 
\end{prop}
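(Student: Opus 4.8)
The plan is to build $u_1$ by the standard combination of a Carleman solvability estimate with the Gaussian beam quasimode, and to obtain $u_2$ by the symmetric argument using the opposite Carleman weight. Concretely, I would insert the ansatz $u_1=e^{-sx_1}(v_s+r_1)$, with $v_s$ the quasimode of Proposition~\ref{prop_Gaussian_beams}, into $P_{X,Y,q}u_1=0$ and observe that this is equivalent to requiring the correction term to solve
\[
e^{sx_1}(h^4P_{X,Y,q})e^{-sx_1}r_1=F,\qquad F:=-e^{sx_1}(h^4P_{X,Y,q})e^{-sx_1}v_s.
\]
The substance of the proof is then the single estimate $\|F\|_{H^{-1}_{\text{scl}}(M^{\text{int}})}=\mathcal{O}(h^{5/2})$: granted this, Proposition~\ref{prop_solvability}, which loses a factor $h^{-2}$, produces $r_1\in H^3(M^{\text{int}})$ with $\|r_1\|_{H^3_{\text{scl}}(M^{\text{int}})}\le (C/h^2)\|F\|_{H^{-1}_{\text{scl}}(M^{\text{int}})}=\mathcal{O}(h^{1/2})$, exactly the claimed bound.

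To estimate $F$ I would split $P_{X,Y,q}=(-\Delta_g)^2+X+\div(Y)+q$ and treat the four contributions separately. The biharmonic piece $e^{sx_1}(-h^2\Delta_g)^2e^{-sx_1}v_s$ has $L^2(M)$ norm $\mathcal{O}(h^{5/2})$ by \eqref{eq_prop_gaussian_1}, and since $L^2(M)$ embeds continuously into $H^{-1}_{\text{scl}}(M^{\text{int}})$ this is the dominant term. Conjugating $X$ through $e^{-sx_1}$ yields $h^4X(v_s)-h^4 s\,X(x_1)v_s$, which is $\mathcal{O}(h^3)$ in $L^2(M)$ because $\|v_s\|_{H^1_{\text{scl}}(M^{\text{int}})}=\mathcal{O}(1)$ forces $\|\nabla v_s\|_{L^2}=\mathcal{O}(h^{-1})$ while $|h^4 s|=\mathcal{O}(h^3)$; the potential term $h^4qv_s$ is $\mathcal{O}(h^4)$ since $q\in L^\infty(M)$.

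The only contribution that is not a function is $\div(Y)$, which I would handle via the duality characterization of the negative norm. Using \eqref{eq_100_1_1}, for a test function $\rho\in C_0^\infty(M^{\text{int}})$ the exponential weights cancel and one gets $\langle e^{sx_1}h^4\div(Y)(e^{-sx_1}v_s),\rho\rangle_{M^{\text{int}}}=-h^4\int_M Y(v_s\rho)\,dV$; expanding $Y(v_s\rho)=Y(v_s)\rho+v_sY(\rho)$ and using $\|v_s\|_{H^1_{\text{scl}}}=\mathcal{O}(1)$ bounds this by $\mathcal{O}(h^3)\|\rho\|_{H^1_{\text{scl}}(M^{\text{int}})}$, so the $H^{-1}_{\text{scl}}$ norm of this term is $\mathcal{O}(h^3)$. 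Summing the four bounds gives $\|F\|_{H^{-1}_{\text{scl}}}=\mathcal{O}(h^{5/2})$ and completes the construction of $u_1$. For $u_2$ one repeats everything with the limiting Carleman weight $\phi=x_1$ replaced by $-\phi=-x_1$ (again a limiting Carleman weight), so that the conjugated operator is $e^{-sx_1}(\cdot)e^{sx_1}$ and the quasimode is $w_s$; by \eqref{eq_prop_gaussian_2} the same $\mathcal{O}(h^{5/2})$ bound holds for the corresponding right-hand side, and Proposition~\ref{prop_solvability} applied with $-\phi$ delivers $r_2$.

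The only genuinely delicate point is the bookkeeping of powers of $h$: the quasimode error must decay faster than $h^2$ so that, after the $h^{-2}$ loss in the solvability estimate, the remainder still tends to zero in $H^3_{\text{scl}}$. It is precisely the two-derivative gain in the Carleman estimate of \cite{KK_2018} that both keeps this loss at $h^{-2}$ and allows the distributional term $\div(Y)$ to be absorbed while working at the level of $H^{-1}_{\text{scl}}$ rather than $L^2$; everything else is routine once Propositions~\ref{prop_Gaussian_beams} and \ref{prop_solvability} are available.
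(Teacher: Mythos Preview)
Your proposal is correct and mirrors the paper's argument essentially step for step: the same ansatz and conjugated equation for $r_1$, the same four-term splitting of $F$ with the identical bounds $\mathcal{O}(h^{5/2})$, $\mathcal{O}(h^3)$, $\mathcal{O}(h^3)$, $\mathcal{O}(h^4)$ (including the duality treatment of $\div(Y)$), and the same application of Proposition~\ref{prop_solvability} to conclude. The construction of $u_2$ via the opposite weight is likewise exactly what the paper does.
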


\section{Proof of Theorem \ref{thm_main}}
\label{sec_proof_of_main_result}

Our starting point is the following integral identity. 

\begin{prop}
\label{prop_integral_identity}
Let $X^{(1)},X^{(2)} \in C(M,TM)$ with complex valued coefficients, and $q^{(1)},q^{(2)} \in C(M,\C)$. If $\mathcal{C}_{X^{(1)},q^{(1)}} =\mathcal{C}_{X^{(2)},q^{(2)}} $, then 
\begin{equation}
\label{eq_integral_identity}
\int_M \big((X^{(1)}-X^{(2)})(u_1)\overline{u_2}+(q^{(1)}-q^{(2)})u_1\overline{u_2}\big) dV_g = 0, 
\end{equation}
for $u_1,u_2 \in H^3(M^\text{int})$ satisfying 
\begin{equation}
\label{eq_int_identity_equations}
L_{X^{(1)},q^{(1)}} u_1 = 0 \quad \text{and}\quad L_{-\overline{X^{(2)}},-\div (\overline{X^{(2)}})+\overline{q^{(2)}}} u_2 = 0.
\end{equation}
\end{prop}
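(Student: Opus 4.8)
The plan is to derive the integral identity \eqref{eq_integral_identity} from the hypothesis $\mathcal{C}_{X^{(1)},q^{(1)}}=\mathcal{C}_{X^{(2)},q^{(2)}}$ via an integration by parts (Green's formula) argument for the biharmonic operator, exactly as in the second-order case but with the appropriate boundary terms. First I would fix $u_1\in H^3(M^{\text{int}})$ solving $L_{X^{(1)},q^{(1)}}u_1=0$. Since its Cauchy data $(u_1|_{\pM},(\Delta_g u_1)|_{\pM},\p_\nu u_1|_{\pM},\p_\nu(\Delta_g u_1)|_{\pM})$ lies in $\mathcal{C}_{X^{(1)},q^{(1)}}=\mathcal{C}_{X^{(2)},q^{(2)}}$, there is $\tilde u_1\in H^3(M^{\text{int}})$ with $L_{X^{(2)},q^{(2)}}\tilde u_1=0$ and the same four boundary traces. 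Set $w=u_1-\tilde u_1\in H^3(M^{\text{int}})$; then all four of its Cauchy traces vanish, i.e. $w|_{\pM}=\p_\nu w|_{\pM}=(\Delta_g w)|_{\pM}=\p_\nu(\Delta_g w)|_{\pM}=0$, and $w$ satisfies $L_{X^{(2)},q^{(2)}}w=L_{X^{(2)},q^{(2)}}u_1-L_{X^{(1)},q^{(1)}}u_1=(X^{(2)}-X^{(1)})u_1+(q^{(2)}-q^{(1)})u_1$ in $M^{\text{int}}$.

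Next I would pair this equation with $u_2$ solving the adjoint equation $L_{-\overline{X^{(2)}},-\div(\overline{X^{(2)}})+\overline{q^{(2)}}}u_2=0$. The key computation is the Green-type identity
\[
\int_M \big( (L_{X^{(2)},q^{(2)}}w)\,\overline{u_2} - w\,\overline{L_{-\overline{X^{(2)}},-\div(\overline{X^{(2)}})+\overline{q^{(2)}}}u_2}\big)\,dV_g = \mathrm{(boundary\ terms)},
\]
where the boundary terms are a bilinear expression in the Cauchy data of $w$ and of $u_2$; since $u_2$ solves the adjoint equation the left side reduces to $\int_M (L_{X^{(2)},q^{(2)}}w)\overline{u_2}\,dV_g$, and since all Cauchy traces of $w$ vanish the boundary terms vanish. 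Here one must be a little careful because of the low regularity: the traces $\p_\nu(\Delta_g w)$, etc., are defined only in the duality sense of \eqref{eq_int_2}, so rather than a naive integration by parts I would justify the identity by the very definition of the weak normal-derivative trace, exactly as in the setup preceding \eqref{eq_int_3}, plus the two standard Green formulas for $-\Delta_g$ applied twice (using $(-\Delta_g)^2=(-\Delta_g)\circ(-\Delta_g)$), together with the formula for integrating the first-order terms $X^{(2)}(w)\overline{u_2}$ against $w\,\overline{(-\overline{X^{(2)}}-\div\overline{X^{(2)}})u_2}$, which differ exactly by a divergence whose integral is a boundary term in $w\overline{u_2}$ that vanishes since $w|_{\pM}=0$. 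Substituting $L_{X^{(2)},q^{(2)}}w=(X^{(2)}-X^{(1)})u_1+(q^{(2)}-q^{(1)})u_1$ then yields
\[
\int_M \big( (X^{(2)}-X^{(1)})(u_1)\,\overline{u_2} + (q^{(2)}-q^{(1)})u_1\,\overline{u_2}\big)\,dV_g = 0,
\]
which is \eqref{eq_integral_identity} up to the harmless overall sign.

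The main obstacle I expect is the rigorous bookkeeping of the boundary terms in the biharmonic Green identity at the available regularity $H^3$: one needs that the pairing $\langle \p_\nu(\Delta_g u),\varphi\rangle$ defined by \eqref{eq_int_2} is genuinely the adjoint boundary contribution, and that the four traces $u|_{\pM}\in H^{5/2}$, $\p_\nu u|_{\pM}\in H^{3/2}$, $(\Delta_g u)|_{\pM}\in H^{1/2}$, $\p_\nu(\Delta_g u)|_{\pM}\in H^{-1/2}$ pair in dual pairs so that every boundary term really is controlled by these traces alone (no extra traces appear). This is where I would invoke the explicit description of the Laplacian in boundary normal coordinates and the identification $\mathcal{C}_{X,q}=\tilde{\mathcal{C}}_{X,q}$ cited from \cite{Lee_Uhlmann}, \cite{Krup_Lassas_Uhlmann_func_anal}, which guarantees that vanishing of the Navier-type Cauchy data of $w$ is equivalent to vanishing of $w,\p_\nu w,\p_\nu^2 w,\p_\nu^3 w$ on $\pM$, hence of all boundary contributions. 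The rest — the two applications of the divergence theorem for $-\Delta_g$ and the first-order adjoint computation — is routine.
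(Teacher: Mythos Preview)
Your proposal is correct and follows essentially the same route as the paper: your $\tilde u_1$ is the paper's $v_2$, and the paper likewise compares the weak pairings $\langle \p_\nu(-\Delta_g u_1),\overline{u_2}\rangle$ and $\langle \p_\nu(-\Delta_g v_2),\overline{u_2}\rangle$ via the very definitions \eqref{eq_int_2} (and an analogous one for $\overline{u_2}$), together with one integration by parts to swap the gradient terms and pick up the remaining boundary contributions, which then match because $u_1$ and $v_2$ share all four traces. The only cosmetic difference is that the paper never forms $w=u_1-\tilde u_1$ explicitly but works with the two solutions separately; this sidesteps the awkwardness you correctly flag, namely that $\p_\nu(\Delta_g w)\in H^{-1/2}$ is not directly defined by \eqref{eq_int_2} since $w$ is not a solution of any $L_{X,q}w=0$, so ``the Cauchy data of $w$ vanish'' is a statement one must interpret by unfolding back to $u_1$ and $\tilde u_1$ anyway.
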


\begin{proof}
First, using that $\overline{u_2}$ solves the equation
\begin{equation}
\label{eq_int_identity_equations_1}
L_{-X^{(2)},-\div (X^{(2)})+q^{(2)}} \overline{u_2} = 0,
\end{equation}
similar to \eqref{eq_int_2}, we define the boundary trace $\p_\nu(\Delta_g \overline{u_2})\in H^{-1/2}(\p M)$ as follows. Letting $\varphi\in H^{1/2}(\p M)$ and letting $v\in H^1(M^{\text{int}})$ be a continuous extension of $\varphi$, we set
\begin{equation}
\label{eq_int_id_100_1}
\begin{aligned}
\langle \p_\nu (-\Delta_g \overline{u_2}), \varphi \rangle_{H^{-1/2}(\p M)\times H^{1/2}(\p M)}=-\int_{\p M} (X^{(2)}\cdot\nu)\overline{u_2}vdS_g\\
+\int_M \big( \langle \nabla_g (-\Delta_g \overline{u_2}), \nabla_g v\rangle_g +\overline{u_2} X^{(2)}(v)+q^{(2)}\overline{u_2}v\big)dV_g.
\end{aligned}
\end{equation}
It follows from \eqref{eq_int_identity_equations_1} that the definition of the trace $\p_\nu(\Delta_g \overline{u_2})$ is independent of the choice of extension $v$ of $\varphi$. 

As $\mathcal{C}_{X^{(1)},q^{(1)}} =\mathcal{C}_{X^{(2)},q^{(2)}} $, there exists $v_2\in H^3(M^{\text{int}})$ such that 
\begin{equation}
\label{eq_int_id_100_2_-1}
L_{X^{(2)},q^{(2)}}v_2=0\quad \text{in}\quad M,
\end{equation}
and 
\begin{equation}
\label{eq_int_id_100_2}
\begin{aligned}
u_1|_{\p M}=v_2|_{\p M},\quad (\Delta_g u_1)|_{\p M}=(\Delta_g v_2)|_{\p M},\quad \p_\nu u_1|_{\p M}=\p_\nu v_2|_{\p M}, \\\p_\nu(\Delta_g u_1)|_{\p M}=\p_\nu(\Delta_g v_2)|_{\p M}.
\end{aligned}
\end{equation}
It follows from \eqref{eq_int_id_100_2} in particular that 
\begin{equation}
\label{eq_int_id_100_3}
\langle \p_\nu(\Delta_g u_1), \overline{u_2} \rangle_{H^{-1/2}(\p M)\times H^{1/2}(\p M)}=\langle\p_\nu(\Delta_g v_2), \overline{u_2} \rangle_{H^{-1/2}(\p M)\times H^{1/2}(\p M)}.
\end{equation}
Using that $v_2$ solves \eqref{eq_int_id_100_2_-1} and \eqref{eq_int_2}, we get 
\begin{equation}
\label{eq_int_id_100_4}
\begin{aligned}
\langle\p_\nu(-\Delta_g v_2), \overline{u_2} &\rangle_{H^{-1/2}(\p M)\times H^{1/2}(\p M)}\\
&=\int_M \big( \langle \nabla_g (-\Delta_g v_2), \nabla_g \overline{u_2}\rangle_g +X^{(2)}(v_2)\overline{u_2}+q^{(2)}v_2\overline{u_2}\big)dV_g.
\end{aligned}
\end{equation}
Using \eqref{eq_int_id_100_1} and integration by parts, we obtain that 
\begin{equation}
\label{eq_int_id_100_5}
\begin{aligned}
\langle \p_\nu (-\Delta_g \overline{u_2}), &v_2\rangle_{H^{-1/2}(\p M)\times H^{1/2}(\p M)}=-\int_{\p M} (X^{(2)}\cdot\nu)\overline{u_2}v_2dS_g\\
&+\int_M \big( \langle \nabla_g  \overline{u_2}, \nabla_g  (-\Delta_g)v_2\rangle_g +\overline{u_2} X^{(2)}(v_2)+q^{(2)}\overline{u_2}v_2\big)dV_g\\
&+\int_{\p M}(\p_\nu \overline{u_2})\Delta_g v_2 dS_g-\int_{\p M}(\Delta_g \overline{u_2})\p_\nu v_2dS_g.
\end{aligned}
\end{equation}
Combining \eqref{eq_int_id_100_4} and \eqref{eq_int_id_100_5}, using \eqref{eq_int_id_100_2},  we obtain that 
\begin{equation}
\label{eq_int_id_100_6}
\begin{aligned}
\langle\p_\nu(-\Delta_g v_2), &\overline{u_2} \rangle_{H^{-1/2}(\p M)\times H^{1/2}(\p M)}=\langle \p_\nu (-\Delta_g \overline{u_2}), v_2\rangle_{H^{-1/2}(\p M)\times H^{1/2}(\p M)}\\
&+\int_{\p M} (X^{(2)}\cdot\nu)\overline{u_2}v_2dS_g-\int_{\p M}(\p_\nu \overline{u_2})\Delta_g v_2 dS_g+\int_{\p M}(\Delta_g \overline{u_2})\p_\nu v_2dS_g\\
&=\langle \p_\nu (-\Delta_g \overline{u_2}), u_1\rangle_{H^{-1/2}(\p M)\times H^{1/2}(\p M)}
+\int_{\p M} (X^{(2)}\cdot\nu)\overline{u_2}u_1dS_g\\
&-\int_{\p M}(\p_\nu \overline{u_2})\Delta_g u_1 dS_g+\int_{\p M}(\Delta_g \overline{u_2})\p_\nu u_1dS_g\\
&=\int_M \big( \langle \nabla_g  \overline{u_2}, \nabla_g  (-\Delta_g)u_1\rangle_g +\overline{u_2} X^{(2)}(u_1)+q^{(2)}\overline{u_2}u_1\big)dV_g.
\end{aligned}
\end{equation}
On the other hand, using the equation \eqref{eq_int_identity_equations} for $u_1$ and \eqref{eq_int_2}, we get 
\begin{equation}
\label{eq_int_id_100_7}
\begin{aligned}
\langle \p_\nu(-\Delta_g u_1), &\overline{u_2} \rangle_{H^{-1/2}(\p M)\times H^{1/2}(\p M)}\\
&=\int_M \big( \langle \nabla_g  (-\Delta_g)u_1, \nabla_g  \overline{u_2}\rangle_g +X^{(1)}(u_1)\overline{u_2} +q^{(1)}u_1\overline{u_2}\big)dV_g.
\end{aligned}
\end{equation}
The claim follows from \eqref{eq_int_id_100_3}, \eqref{eq_int_id_100_6} and \eqref{eq_int_id_100_7}. 
\end{proof}

Now by Proposition \ref{prop_CGO}, for $h>0$ small enough, there are $u_1, u_2\in H^3(M^{\text{int}})$ solutions to  $L_{X^{(1)},q^{(1)}} u_1 = 0$ and $L_{-\overline{X^{(2)}},-\div (\overline{X^{(2)}})+\overline{q^{(2)}}} u_2 = 0$ in $M^{\text{int}}$,  of the form, 
\begin{equation}
\label{eq_300_1}
u_1=e^{-sx_1}(v_s+r_1), \quad u_2=e^{sx_1}(w_s+r_2),
\end{equation}
where $v_s, w_s\in C^\infty(M)$ are the Gaussian beam quasimode given in Proposition \ref{prop_Gaussian_beams} and 
\begin{equation}
\label{eq_300_2}
\|r_1\|_{H^1_{\emph{\text{scl}}}(M^{\text{int}})}=\mathcal{O}(h^{1/2}), \quad \|r_2\|_{H^1_{\emph{\text{scl}}}(M^{\text{int}})}=\mathcal{O}(h^{1/2}), 
\end{equation}
 as $h\to 0$. 
 
Let us denote $X = X^{(1)}-X^{(2)}$ and $q= q^{(1)}- q^{(2)}$. By the boundary determination of Proposition \ref{boundary determination_X}, we may extend $X$ by zero to the complement of $M$ in $\R\times M_0$ so that the extension $X\in C(\R\times M_0,T(\R\times M_0))$.

\textit{Step 1. Proving that there exists $\psi\in C^1(\R\times M_0)$ with compact support in $x_1$ such that $\psi(x_1,\cdot)|_{\p M_0}=0$ and $\nabla_g \psi=X$.}

In this step, we shall work with solutions $u_1$ and $u_2$ given by \eqref{eq_300_1} with $v_s$ and $w_s$ being the Gaussian beam quasimode for which Proposition \ref{prop_computations_gaussian_1} holds. In particular, here $v_s$ has an amplitude of the first type. Next, we would like to substitute $u_1$ and $u_2$ into the integral identity \eqref{eq_integral_identity}, multiply it by $h$, and let $h\to 0$. To that end, first using \eqref{eq_300_2}, \eqref{eq_prop_gaussian_1}, and \eqref{eq_prop_gaussian_2}, we get
\begin{equation}
\label{eq_300_3}
\bigg|h \int_M q u_1\overline{u_2} dV_g \bigg|=\bigg|h \int_M q e^{-2i\lambda x_1}(v_s+r_1)(\overline{w_s}+\overline{r_2}) dV_g \bigg|=\mathcal{O}(h).
\end{equation}

Writing $x=(x_1,x')$, $x'\in M_0$, and $X=X_1\p_{x_1}+\tilde X\cdot\p_{x'}$, we obtain that 
\begin{equation}
\label{eq_300_4}
h\int_M X(u_1)\overline{u_2} dV_g=I_1+I_2+I_3+I_4,  
\end{equation}
where 
\begin{equation}
\label{eq_300_5}
I_1=h\int_M e^{-2i\lambda x_1} X(v_s)\overline{w_s}dV_g-\int_M X_1(x_1,x')e^{-2i\lambda x_1}v_s\overline{w_s}dV_g,
\end{equation}
\begin{equation}
\label{eq_300_6}
I_2=-hi\lambda\int_M X_1(x_1,x')e^{-2i\lambda x_1}(v_s+r_1)(\overline{w_s}+\overline{r_2})dV_g,
\end{equation}
\begin{equation}
\label{eq_300_7}
I_3=-\int_M X_1(x_1,x')e^{-2i\lambda x_1}(v_s \overline{r_2} + \overline{w_s} r_1+ r_1\overline{r_2})dV_g,
\end{equation}
\begin{equation}
\label{eq_300_8}
I_4=h\int_M e^{-2i\lambda x_1} (X(v_s) \overline{r_2}+ X(r_1) \overline{w_s}+ X(r_1)\overline{r_2})dV_g.
\end{equation}
Using \eqref{eq_300_2}, \eqref{eq_prop_gaussian_1}, and \eqref{eq_prop_gaussian_2}, we get
\begin{equation}
\label{eq_300_9}
|I_2|=\mathcal{O}(h), \quad |I_3|=\mathcal{O}(h^{1/2}), \quad |I_4|=\mathcal{O}(h^{1/2}).
\end{equation}
It follows from \eqref{eq_integral_identity} with the help of \eqref{eq_300_3}, \eqref{eq_300_4}, and \eqref{eq_300_9} that 
\begin{equation}
\label{eq_300_10}
\lim_{h\to 0}I_1=0.
\end{equation}
Using that $X=0$ outside of $M$, $dV_g=c^{\frac{n}{2}}dx_1dV_{g_0}$,  Fubini's theorem, and Proposition \ref{prop_computations_gaussian_1}, we obtain from \eqref{eq_300_10} that 
\begin{equation}
\label{eq_300_11}
\begin{aligned}
0=&\lim_{h\to 0} h\int_{\R} e^{-2i\lambda x_1} \int_{M_0} X(v_s)\overline{w_s}c(x_1,x')^{\frac{n}{2}}dV_{g_0}dx_1\\
&-\lim_{h\to 0}\int_\R e^{-2i\lambda x_1}\int_{M_0}   X_1(x_1,x')v_s\overline{w_s}c(x_1,x')^{\frac{n}{2}}dV_{g_0}dx_1\\
=&-\int_\R e^{-2i\lambda x_1}\int_0^L\big( X_1(x_1,\gamma(t))-iX_t(x_1,\gamma(t))\big)c(x_1,\gamma(t))e^{-2\lambda t}dtdx_1.
\end{aligned}
\end{equation}

Now the Riemmanian metric $g$ on $M$ induces a natural isomorphism between the tangent and cotangent bundles given by 
\begin{equation}
\label{eq_300_12_0}
TM\to T^*M, \quad (x,X)\mapsto (x,X^b),
\end{equation}
where $X^b(Y)=\langle X,Y\rangle$. In local coordinates, $X^b=\sum_{j,k=1}^n g_{jk}X_jdx_k$, and using that $g=c(e\oplus g_0)$, and \eqref{eq_gauss_3_metric}, we get 
\[
X_1^b(x_1,\gamma(t))=c(x_1,\gamma(t))X_1(x_1,\gamma(t)), \quad X_t^b(x_1,\gamma(t))=c(x_1,\gamma(t))X_t(x_1,\gamma(t)).
\]
Hence, it follows from \eqref{eq_300_11}, replacing $2\lambda$ by $\lambda$ that 
\begin{equation}
\label{eq_300_12}
\int_\R\int_0^L e^{-i\lambda x_1-\lambda t}(X_1^b (x_1,\gamma(t))-iX^b_t(x_1,\gamma(t)))dtdx_1=0.
\end{equation}
Letting 
\begin{equation}
\label{eq_300_13}
\begin{aligned}
&f(\lambda,x')=\int_\R e^{-i\lambda x_1} X_1^b (x_1,x')dx_1,\quad x'\in M_0, \\
&\alpha(\lambda,x')=\sum_{j=2}^n \bigg(\int_\R e^{-i\lambda x_1 } X^b_j(x_1,x') \bigg)dx_j,
\end{aligned}
\end{equation}
we have $f(\lambda,\cdot)\in C(M_0)$, $\alpha(\lambda, \cdot)\in C(M_0, T^*M)$, and \eqref{eq_300_12} implies that 
\begin{equation}
\label{eq_300_14}
\int_0^L[f(\lambda, \gamma(t))-i\alpha(\lambda,\dot{\gamma}(t))]e^{-\lambda t}dt=0, 
\end{equation}
along any unit speed non-tangential geodesic $\gamma:[0,L]\to M_0$ on $M_0$ and any $\lambda\in \R$. Arguing as in  \cite[Section 7]{KK_2018}, \cite{Cekic}, using the injectivity of the geodesic X-ray transform on functions and $1$-forms, we conclude from \eqref{eq_300_14} that there exist $p_l\in C^1(M_0)$, $p_l|_{\p M_0}=0$, such that 
\begin{equation}
\label{eq_300_15}
\p_\lambda^l f(0,x')+lp_{l-1}(x')=0, \quad \p_\lambda^l\alpha(0,x')=idp_l(x'),\quad l=0,1,2,\dots. 
\end{equation}
To proceed we shall follow \cite[Section 5]{LO_2019}, and let 
\begin{equation}
\label{eq_300_16}
\psi(x_1,x')=\int_{-a}^{x_1} X_1^b(y_1,x')dy_1,
\end{equation}
where $\supp(X^b(\cdot,x'))\subset (-a,a)$.  It follows from \eqref{eq_300_15}, \eqref{eq_300_13} that 
\[
0=f(0,x')=\int_\R X_1^b(y_1,x')dy_1,
\]
and therefore, $\psi$ has compact support in $x_1$. Thus, the Fourier transform of $\psi$ with respect to $x_1$, which we denote by $\hat \psi(\lambda, x')$ is real analytic with respect to $\lambda$, and therefore, we have 
\begin{equation}
\label{eq_300_17}
\hat \psi(\lambda,x')=\sum_{k=0}^\infty \frac{\psi_k(x')}{k!}\lambda^k,
\end{equation}
where $\psi_k(x')=(\p_\lambda^k\hat \psi)(0,x')$.  It follows from \eqref{eq_300_16} that 
\begin{equation}
\label{eq_300_18}
\p_{x_1}\psi(x_1,x')=X_1^b(x_1,x'),
\end{equation}
and therefore, taking the Fourier transform with respect to $x_1$, and using \eqref{eq_300_13}
\begin{equation}
\label{eq_300_19}
i\lambda \psi(\lambda,x')=f(\lambda,x').
\end{equation}
Differentiating \eqref{eq_300_19} $(l+1)$-times in $\lambda$, letting $\lambda=0$, and using \eqref{eq_300_15},  we get 
\begin{equation}
\label{eq_300_20}
\p_\lambda^l\hat \psi(0,x')=ip_l(x'), \quad l=0,1,2,\dots.
\end{equation}
Substituting \eqref{eq_300_20} into \eqref{eq_300_17}, we obtain that 
\[
\hat \psi(\lambda,x')=\sum_{k=0}^\infty \frac{ip_l(x')}{k!}\lambda^k,
\]
and taking the differential in $x'$ in the sense of distributions, and using \eqref{eq_300_15}, \eqref{eq_300_13}, we see that 
\begin{equation}
\label{eq_300_21}
d_{x'}\hat \psi(\lambda,x')=\sum_{k=0}^\infty \frac{i d p_l(x')}{k!}\lambda^k=\sum_{k=0}^\infty \frac{\p_{\lambda}^k\alpha(0,x')}{k!}\lambda^k=\alpha(\lambda,x')=\sum_{j=2}^n \hat X_j^b(\lambda,x')dx_j.
\end{equation}
Taking the inverse Fourier transform $\lambda\mapsto x_1$ in \eqref{eq_300_21}, we get 
\begin{equation}
\label{eq_300_22}
d_{x'} \psi(x_1,x')=\sum_{j=2}^n  X_j^b(x_1,x')dx_j.
\end{equation}
We also have from \eqref{eq_300_18} that 
\begin{equation}
\label{eq_300_23}
d_{x_1} \psi(x_1,x')=X_1^b(x_1,x')dx_1.
\end{equation}
It follows from \eqref{eq_300_22} and \eqref{eq_300_23} that 
\begin{equation}
\label{eq_300_24}
d\psi=X^b.
\end{equation}
Using the inverse of \eqref{eq_300_12_0}, we see from \eqref{eq_300_24} that 
\begin{equation}
\label{eq_300_25}
\nabla_g \psi=X.
\end{equation}
Recall that $\psi\in C(\R\times M_0)$ with compact support in $x_1$ and $\psi(x_1,\cdot)|_{\p M_0}=0$. It follows from \eqref{eq_300_25} that $\psi\in C^1(\R\times M_0)$.

\textit{Step 2. Showing that $X=0$.}

Returning to \eqref{eq_integral_identity} and using \eqref{eq_300_25}, we get
\begin{equation}
\label{eq_300_26}
\int_M \big((\nabla_g \psi)(u_1)\overline{u_2}+q u_1\overline{u_2}\big) dV_g = 0, 
\end{equation}
for $u_1,u_2 \in H^3_{scl}(M^\text{int})$ satisfying $L_{X^{(1)},q^{(1)}} u_1 = 0$ and $L_{-\overline{X^{(2)}},-\div (\overline{X^{(2)}})+\overline{q^{(2)}}} u_2 = 0$.
Let now $u_1$ and $u_2$ be given by \eqref{eq_300_1} with $v_s$ and $w_s$ being the Gaussian beam quasimode for which Proposition \ref{prop_computations_gaussian_2} holds. In particular, here $v_s$ has an amplitude of the second type. We would like to substitute $u_1$ and $u_2$ into the integral identity \eqref{eq_300_26}, multiply it by $h$, and let $h\to 0$. Similar to \eqref{eq_300_10}, using \eqref{eq_300_3} and \eqref{eq_300_9}, we get  
\begin{equation}
\label{eq_300_27}
\lim_{h\to 0}h\int_M e^{-2i\lambda x_1} (\nabla_g\psi)(v_s)\overline{w_s}dV_g-\int_M (\nabla_g\psi)_1e^{-2i\lambda x_1}v_s\overline{w_s}dV_g=0.
\end{equation}
It follows from \eqref{eq_300_27} with the help of Proposition \ref{prop_computations_gaussian_2}, 
\begin{equation}
\label{eq_300_28}
\int_\R  \int_0^L e^{-2i\lambda (x_1-it)}  \psi(x_1,\gamma(t)) c(x_1,\gamma(t))
dtdx_1=0.
\end{equation}
Now \eqref{eq_300_28} can be written as 
\begin{equation}
\label{eq_300_29}
\int_\gamma \hat{\psi c}(2\lambda, \gamma(t))e^{-2\lambda t}dt=0,
\end{equation}
for any $\lambda\in\R$ and any non-tangential geodesic $\gamma$ in $M_0$, where
\[
\hat{\psi c}(2\lambda, x')=\int_{-\infty}^\infty e^{-2 i\lambda x_1}(\psi c)(x_1,x')dx_1.
\]
The equation \eqref{eq_300_29} says that the attenuated geodesic ray transform of $\hat{\psi c}$ with constant attenuation $-2\lambda$ vanishes along all non-tangential geodesics in $M_0$. Arguing as in \cite[Proof of Theorem 1.2]{DKuLS_2016} and using the injectivity of the geodesic $X$-ray transform on functions, we conclude that $\psi c=0$, and therefore, $\psi=0$ and hence, $X=0$. 

\textit{Step 3. Proving that $q=0$.}

Returning to \eqref{eq_integral_identity} and substituting $X^{(1)}=X^{(2)}$, we get 
\begin{equation}
\label{eq_300_30}
\int_M q u_1\overline{u_2} dV_g = 0, 
\end{equation}
for $u_1,u_2 \in H^3_{scl}(M^\text{int})$ satisfying $L_{X^{(1)},q^{(1)}} u_1 = 0$ and $L_{-\overline{X^{(2)}},-\div (\overline{X^{(2)}})+\overline{q^{(2)}}} u_2 = 0$.
Let now $u_1$ and $u_2$ be given by \eqref{eq_300_1} with $v_s$ and $w_s$ being the Gaussian beam quasimode for which Proposition \ref{prop_computations_gaussian_1} holds. In particular, here $v_s$ has an amplitude of the first type. Substituting $u_1$ and $u_2$ into \eqref{eq_300_30}, we obtain that 
\begin{equation}
\label{eq_300_31}
0=\int_M q u_1\overline{u_2} dV_g = I_1+I_2, 
\end{equation}
where 
\begin{align*}
I_1&=\int_M e^{-2i\lambda x_1} q v_s\overline{w_s} dV_g=\int_\R e^{-2i\lambda x_1} \int_{M_0} q v_s\overline{w_s}  c^{\frac{n}{2}}dV_{g_0}dx_1,\\
I_2&=\int_M e^{-2i\lambda x_1} q  (v_s\overline{r_2}+r_1\overline{w_s}+r_1\overline{r_2}) dV_g.
\end{align*}
Here in view of the assumption \eqref{eq_int_boundaty},  we extended $q$ by zero to the complement of $M$ in $\R\times M_0$ so that the extension $q\in C(\R\times M_0,\C)$. 

Using \eqref{eq_300_2}, \eqref{eq_prop_gaussian_1}, and \eqref{eq_prop_gaussian_2}, we see that 
\begin{equation}
\label{eq_300_32}
|I_2|=\mathcal{O}(h^{1/2}).
\end{equation}

Letting $h\to 0$, we obtain from \eqref{eq_300_31}, \eqref{eq_300_32} with the help of Proposition \ref{prop_computations_gaussian_1}  that 
\[
\int_\R e^{-2i\lambda x_1} \int_0^L e^{-2\lambda t}  (qc)(x_1,\gamma(t))dtdx_1=0.
\]
Arguing as in \cite[Proof of Theorem 1.2]{DKuLS_2016} and using the injectivity of the geodesic $X$-ray transform on functions, we conclude that $q c=0$, and therefore, $q=0$. This complete the proof of Theorem \ref{thm_main}.

\begin{appendix}
\section{Boundary determination of a first order perturbation of the biharmonic operator}

\label{sec_boundary_rec}

When proving Theorem \ref{thm_main}, an important step consists in determining the boundary values of the first order perturbation of the biharmonic operator. The purpose of this section is to carry out this step by adapting the method of \cite{Brown_Salo_2006},  \cite{KK_2018}.

\begin{prop}
\label{boundary determination_X}
Let $(M,g)$ be a CTA manifold of dimension $n\ge 3$. Let $X^{(1)},X^{(2)} \in C(M,TM)$ with complex vector fields and $q^{(1)},q^{(2)} \in L^\infty(M,\C)$. If $\mathcal{C}_{g,X^{(1)},q^{(1)}} =\mathcal{C}_{g,X^{(2)},q^{(2)}} $, then $X^{(1)}|_{\p M} = X^{(2)}|_{\p M}$.
\end{prop}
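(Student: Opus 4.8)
The plan is to adapt the boundary-layer construction of \cite{Brown_Salo_2006}, \cite{KK_2018} to the first order perturbation of the biharmonic operator. Fix a boundary point $x_0\in\p M$. The proof of Proposition \ref{prop_integral_identity} goes through verbatim when $q^{(1)},q^{(2)}\in L^\infty(M,\C)$, so the hypothesis $\mathcal{C}_{X^{(1)},q^{(1)}}=\mathcal{C}_{X^{(2)},q^{(2)}}$ yields
\[
\int_M\big(X(u_1)\overline{u_2}+q\,u_1\overline{u_2}\big)\,dV_g=0,\qquad X:=X^{(1)}-X^{(2)},\quad q:=q^{(1)}-q^{(2)},
\]
for all $u_1\in H^3(M^{\text{int}})$ with $L_{X^{(1)},q^{(1)}}u_1=0$ and all $u_2\in H^3(M^{\text{int}})$ with $L_{-\overline{X^{(2)}},-\div(\overline{X^{(2)}})+\overline{q^{(2)}}}u_2=0$. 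Into this identity I will feed solutions concentrating at $x_0$ as a large parameter $\tau=1/h$ tends to $\infty$, so that the leading-order behaviour isolates the value $X(x_0)$.

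Work in boundary normal coordinates $(x',x_n)$ near $x_0=0$, with $\p M=\{x_n=0\}$, $M$ lying on the side $x_n\ge 0$, and $g=c\,(dx_n^2\oplus g_0)$. For a tangential covector $\xi'$ I take a complex phase $\Phi=\Phi_{\xi'}(x)=i\,x'\cdot\xi'-|\xi'|_{g_0(x_0)}x_n+\mathcal{O}(|x|^2)$ solving the eikonal equation $\langle\nabla_g\Phi,\nabla_g\Phi\rangle_g=0$ to high order at $x_0$, with $\operatorname{Re}\Phi\le0$ and $\operatorname{Re}\Phi\asymp -x_n$ near $x_0$; this is the half-space analogue of Step~2 of the proof of Proposition \ref{prop_Gaussian_beams}, the conformal factor entering only through a harmless factor $c^{-1}$. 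I then solve the transport equations associated with $(-\Delta_g)^2$ as in Step~3 there (carrying the conformal factor exactly as in \eqref{eq_200_12_0}, and, if needed, allowing amplitudes of the ``second type'' satisfying a $\overline\p$–type equation), obtaining $v_\tau=\chi(x)\,e^{\tau\Phi}\big(a_0+\tfrac1\tau a_1+\cdots\big)$, with $\chi$ supported in a small half-ball about $x_0$. One has $\|v_\tau\|_{L^2(M)}\asymp\tau^{-1/2}$, $\|v_\tau\|_{H^1(M)}=\mathcal{O}(\tau^{1/2})$, and $\|L_{X^{(1)},q^{(1)}}v_\tau\|_{H^{-3}_{\text{scl}}(M^{\text{int}})}$ small; correcting $v_\tau$ to an exact solution $u_1=v_\tau+r_1$ of $L_{X^{(1)},q^{(1)}}u_1=0$ via the well-posedness of the fourth order Dirichlet problem (as in \cite{KK_2018}, \cite{Brown_Salo_2006}) gives $\|r_1\|_{H^1(M)}=o(\tau^{-1/2})$. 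I choose $u_2=v_\tau^\ast+r_2$, an exact solution of the adjoint equation carrying the \emph{same} phase $\Phi_{\xi'}$, so that the tangential oscillations cancel in the product $u_1\overline{u_2}$.

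Substituting into the integral identity and using $u_1\overline{u_2}=e^{2\tau\operatorname{Re}\Phi_{\xi'}}|a_0|^2\chi^2+\text{(negligible)}$, whose mass in the normal variable is $\mathcal{O}(\tau^{-1})$, one finds $\int_M q\,u_1\overline{u_2}\,dV_g\to 0$, while
\[
\int_M X(u_1)\overline{u_2}\,dV_g=\tau\!\int_M X(\Phi_{\xi'})\,u_1\overline{u_2}\,dV_g+o(1)
\]
converges, by Laplace's method in $x_n$ together with continuity of $X$ on $\p M$ (shrinking the tangential support of $\chi$ to $x_0$), to a nonzero multiple of
\[
\big(d\Phi_{\xi'}(X)\big)(x_0)=i\,\langle X_{\mathrm{tan}}(x_0),\xi'\rangle-|\xi'|_{g_0(x_0)}\,\langle X(x_0),\nu\rangle .
\]
Hence this quantity vanishes for every $\xi'$; replacing $\xi'$ by $-\xi'$ and forming sums and differences separates the normal component $\langle X(x_0),\nu\rangle$ from the tangential part $X_{\mathrm{tan}}(x_0)$, so $X(x_0)=0$, i.e.\ $X^{(1)}(x_0)=X^{(2)}(x_0)$. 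Since $x_0\in\p M$ is arbitrary, $X^{(1)}|_{\p M}=X^{(2)}|_{\p M}$. The main obstacle is the quasimode construction: because $(-\Delta_g)^2$ applies the Laplacian twice, WKB errors are amplified, so one must expand the phase and amplitude to sufficiently high order (and couple the shrinking of $\operatorname{supp}\chi$ to $\tau$) in order to make $\|L_{X^{(1)},q^{(1)}}v_\tau\|_{H^{-3}_{\text{scl}}(M^{\text{int}})}$ small enough that the corrector obeys $\|r_1\|_{H^1(M)}=o(\tau^{-1/2})$, all while carrying the conformal factor through the transport equations.
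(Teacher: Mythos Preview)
Your strategy is sound in spirit, but the paper's route is both simpler and avoids the obstacle you yourself flag at the end. Rather than building a genuine biharmonic quasimode from scratch (eikonal to high order, transport equations for $(-\Delta_g)^2$, coupling of the cutoff scale to $\tau$), the paper exploits the trivial but decisive fact that \emph{harmonic functions are biharmonic}. Concretely, in boundary normal coordinates one sets
\[
v_0(x)=\eta\!\left(\frac{x}{\lambda^{1/2}}\right)e^{\frac{i}{\lambda}(\tau'\cdot x'+ix_n)}
\]
and then introduces $v_1\in H^1_0(M^{\text{int}})$ solving the Dirichlet problem $-\Delta_g v_1=\Delta_g v_0$. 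Thus $\Delta_g(v_0+v_1)=0$, hence $(-\Delta_g)^2(v_0+v_1)=0$ exactly, and the remainder equation for $r_1$ in $u_1=v_0+v_1+r_1$ reads simply
\[
L_{X^{(1)},q^{(1)}}r_1=-(X^{(1)}+q^{(1)})(v_0+v_1),
\]
with no biharmonic WKB error whatsoever. All the needed estimates on $v_0,v_1$ (including the crucial weighted bound $\|\delta\,d(v_0+v_1)\|_{L^2}=\mathcal O(\lambda^{\frac{n-1}{4}+\frac12})$, used through Hardy's inequality) are already recorded in \cite{KK_2018}, and the corrector is produced by the solvability Proposition \ref{prop_solvability} with $h>0$ small but \emph{fixed}. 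This sidesteps entirely the ``amplified WKB errors'' that worry you.

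Two further remarks on your sketch. First, the corrector is not obtained from ``well-posedness of the fourth order Dirichlet problem'' (which would impose boundary conditions you do not want on $r_1$), but from the Carleman-based solvability result; this matters because $r_1$ must be allowed to have nontrivial Cauchy data. Second, the tangential localization in the paper is achieved by the explicit anisotropic rescaling $\eta(x/\lambda^{1/2})$, not by a separate ``shrinking $\chi$ to $x_0$'' step; this scaling is what makes the boundary-layer integrals balance correctly and gives the clean limit $\frac{i}{2}X(0)\cdot(\tau',i)$ that you also arrive at. Your endgame (varying the tangential covector to isolate the normal and tangential components of $X(x_0)$) matches the paper's.
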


\begin{proof}
We shall follow \cite{Brown_Salo_2006}, \cite{KK_2018} closely. We shall construct some special solutions to the equations $L_{X^{(1)},q^{(1)}}u_1=0$ and $L_{-\overline{X^{(2)}},-\div(\overline{X^{(2)}})+\overline{q^{(2)}}}u_2=0$, whose boundary values have an oscillatory behavior while becoming increasingly concentrated near a given point on the boundary of $M$. Substituting these solutions into the integral identity \eqref{eq_integral_identity} will allow us to prove that $X^{(1)}|_{\p M} = X^{(2)}|_{\p M}$. 

In doing so, let $x_0\in \p M$ and let $(x_1,\dots, x_n)$ be the boundary normal coordinates centered at $x_0$ so that in these coordinates, $x_0 =0$, the boundary $\p M$ is given by $\{x_n=0\}$, and $M^{\text{int}}$ is given by $\{x_n > 0\}$. We shall assume, as we may, that 
\begin{equation}
\label{eq_Laplace_boundary-nc_2}
g^{\alpha \beta}(0)=\delta^{\alpha \beta}, \quad 1\le \alpha,\beta\le n-1,
\end{equation}
and therefore $T_0\p M=\R^{n-1}$, equipped with the Euclidean metric.  The unit tangent vector $\tau$ is then given by $\tau=(\tau',0)$ where $\tau'\in \R^{n-1}$, $|\tau'|=1$.   Associated to the tangent vector $\tau'$ is the covector $\xi'_\alpha=\sum_{\beta=1}^{n-1} g_{\alpha \beta}(0) \tau'_\beta=\tau'_\alpha\in T^*_{x_0}\p M$.

Let $\eta\in C^\infty_0(\R^n,\R)$ be a function such that $\supp(\eta)$ is in a small neighborhood of $0$, and 
\begin{equation}
\label{eq_int_eta_1}
\int_{\R^{n-1}}\eta(x',0)^2dx'=1.
\end{equation}
Following \cite{Brown_Salo_2006}, in the boundary normal coordinates,  we set 
\begin{equation}
\label{eq_9_1}
v_0(x)=\eta\bigg(\frac{x}{\lambda^{1/2}}\bigg)e^{\frac{i}{\lambda}(\tau'\cdot x'+ ix_n)}, \quad 0<\lambda\ll 1,
\end{equation}
so that  $v_0\in C^\infty(M)$ with $\supp(v_0)$ in  $\mathcal{O}(\lambda^{1/2})$ neighborhood of $x_0=0$. Here $\tau'$ is viewed as a covector. 

Let $v_1\in H^1_0(M^{\text{int}})$ be the solution to the following Dirichlet problem for the Laplacian,
\begin{equation}
\label{eq_9_Dirichlet_lapl}
\begin{aligned}
-\Delta_g v_1=&\Delta_g v_0, \quad\textrm{in}\quad M,\\
v_1|_{\p M}=&0.   
\end{aligned}
\end{equation} 
Let $\delta(x)$ be the distance from $x\in M$ to the boundary of $M$. As proved in the  \cite[Appendix]{KK_2018}, the following estimates hold:
\begin{equation}
\label{eq_9_3} 
\|v_0\|_{L^2(M)}\le \mathcal{O}(\lambda^{\frac{n-1}{4}+\frac{1}{2}}),
\end{equation}
\begin{equation}
\label{eq_9_4}
\|v_1\|_{L^2(M)}\le \mathcal{O}(\lambda^{\frac{n-1}{4}+\frac{1}{2}}),
\end{equation}
\begin{equation}
\label{eq_9_15}
\|d v_1\|_{L^2(M)}\le \mathcal{O}(\lambda^{\frac{n-1}{4}}),
\end{equation}
\begin{equation}
\label{eq_Brown_Salo_2_18_new_mnfld}
\| dv_0\|_{L^2(M)}\le \mathcal{O}(\lambda^{\frac{n-1}{4}-\frac{1}{2}}),
\end{equation}
\begin{equation}
\label{eq_Brown_Salo_2_20}
\|\delta d(v_0+v_1)\|_{L^2(M)}\le \mathcal{O}(\lambda^{\frac{n-1}{4}+\frac{1}{2}}),
\end{equation}
\begin{equation}
\label{eq_Brown_Salo_2_20_boundary}
\|v_0\|_{L^2(\p M)}\le \mathcal{O}(\lambda^{\frac{n-1}{4}}).
\end{equation}
We shall also need Hardy's inequality,
\begin{equation}
\label{eq_Hardy} 
\int_M |f(x)/\delta(x)|^2dV_g\le C\int_M |df(x)|^2dV_g,
\end{equation}
where $f\in H^1_0(M^{\text{int}})$, see \cite{Davies_2000}.

Next we would like to show the existence of a solution $u_1 \in H^3(M^{\text{int}})$ to the equation
\begin{equation}
\label{eq_A_3}
L_{X^{(1)},q^{(1)}} u_1 =0\quad  \text{in}\quad M,
\end{equation}
of the form 
\begin{equation}
\label{eq_A_4}
u_1=v_0+v_1+r_1,
\end{equation}
 with
 \begin{equation}
 \label{eq_A_r1}
 \|r_1\|_{H^{3}(M^{\text{int}})} \le \mathcal{O} (\lambda^{\frac{n-1}{4}+\frac{1}{2}}).
 \end{equation}
 
 To that end, plugging \eqref{eq_A_4} into \eqref{eq_A_3}, we obtain the following equation of $r_1$, 
 \begin{equation}
 \label{eq_A_5}
 L_{X^{(1)},q^{(1)}} r_1 = -((-\Delta_g)^2+X^{(1)}+q^{(1)}) (v_0+v_1) = -(X^{(1)}+q^{(1)})(v_0+v_1) \quad  \text{in}\quad M.
 \end{equation}
 Applying Proposition \ref{prop_solvability} with $h>0$ small but fixed, we conclude the existence of $r_1 \in H^3(M^{\text{int}})$ such that 
 \begin{equation}
 \label{eq_A_6}
 \|r_1\|_{H^3(M^{\text{int}})} \le \mathcal{O}(1)\|(X^{(1)}+q^{(1)})(v_0+v_1)  \|_{H^{-1}(M^{\text{int}})}. 
 \end{equation}
 Let us now bound the norm in the right hand side of \eqref{eq_A_6}. To that end,  letting $\psi \in C^\infty_0(M^{\text{int}})$ and using  \eqref{eq_Hardy}, \eqref{eq_Brown_Salo_2_20}, we get
 \begin{equation}
 \label{eq_A_7}
 \begin{aligned}
 |\langle X^{(1)}(v_0+v_1), \psi \rangle_{M^{\text{int}}}|  \le \mathcal{O}(1)\|X^{(1)}\|_{L^\infty(M)}  \|\delta d(v_0+v_1)\|_{L^2(M)}\|\psi\|_{H^1(M^{\text{int}})}\\
   \le \mathcal{O} (\lambda^{\frac{n-1}{4}+\frac{1}{2}}) \|\psi\|_{H^1(M^{\text{int}})}. 
 \end{aligned}
 \end{equation} 
By \eqref{eq_9_3} and \eqref{eq_9_4}, we have
\begin{equation}
 \label{eq_A_8}
\begin{aligned}
 |\langle q^{(1)}(v_0+v_1), \psi \rangle_{M^{\text{int}}}| & \le \|q^{(1)}\|_{L^\infty(M^0)} \|v_0 +v_1\|_{L^2(M)} \|\psi\|_{L^2(M)} \\
 & \le \mathcal{O}(\lambda^{\frac{n-1}{4}+\frac{1}{2}}) \|\psi\|_{H^1(M^{\text{int}})}. 
\end{aligned}
\end{equation}
The estimate \eqref{eq_A_r1} follows from \eqref{eq_A_6}, \eqref{eq_A_7} and \eqref{eq_A_8}. 

Let us show that there exists a solution $u_2 \in H^3(M^{\text{int}})$ of $L_{-\overline{X^{(2)}},-\div(\overline{X^{(2)}})+ \overline{q^{(2)}}} u_2 = 0$ in $M$ of the form
\begin{equation}
\label{eq_A_9}
u_2 = v_0 + v_1 + r_2
\end{equation}
where 
$r_2 \in H^3(M^{\text{int}})$ with 
\begin{equation}
 \label{eq_A_r2}
 \|r_2\|_{H^3(M^{\text{int}})} \le \mathcal{O} (\lambda^{\frac{n-1}{4}+\frac{1}{2}}).
 \end{equation} 
Applying Proposition \ref{prop_solvability} with $h>0$ small but fixed to the equation, 
\begin{equation}
 \label{eq_A_5_2}
 L_{-\overline{X^{(2)}},-\div(\overline{X^{(2)}})+ \overline{q^{(2)}}} r_2=(\overline{X^{(2)}}+\div(\overline{X^{(2)}})-\overline{q^{(2)}})(v_0+v_1)\quad  \text{in}\quad M,
\end{equation}
  we conclude the existence of $r_2 \in H^1(M^{\text{int}})$ such that 
 \begin{equation}
 \label{eq_A_6_2}
 \|r_2\|_{H^3(M^{\text{int}})} \le \mathcal{O}(1) \|(\overline{X^{(2)}}+\div(\overline{X^{(2)}})-\overline{q^{(2)}})(v_0+v_1) \|_{H^{-1}(M^{\text{int}})}. 
 \end{equation}
To bound the norm in the right hand side of \eqref{eq_A_6_2},  we let $\psi \in C^\infty_0(M^{\text{int}})$, and using \eqref{eq_Hardy},   \eqref{eq_100_1_1}, \eqref{eq_9_3}, \eqref{eq_9_4}, \eqref{eq_Brown_Salo_2_20},
we get
\begin{equation}
 \label{eq_A_6_2_0}
\begin{aligned}
|\langle &\div(\overline{X^{(2)}})(v_0+v_1), \psi \rangle_{M^{\text{int}}}|=\bigg|\int \overline{X^{(2)}}((v_0+v_1)\psi)dV_g \bigg|\\
&\le \bigg| \int \psi \overline{X^{(2)}}(v_0+v_1)dV_g \bigg|+ \bigg|\int (v_0+v_1)\overline{X^{(2)}}(\psi) dV_g \bigg|\\
&\le \mathcal{O}(1)\|\delta d (v_0+v_1)\|_{L^2(M)}\|\psi\|_{H^1(M^{\text{int}})}+\mathcal{O}(1)\|v_0+v_1\|_{L^2(M)}\|\psi\|_{H^1(M^{\text{int}})}\\
&\le \mathcal{O}(\lambda^{\frac{n-1}{4}+\frac{1}{2}})\|\psi\|_{H^1(M^{\text{int}})}.
\end{aligned}
\end{equation}
The bound \eqref{eq_A_r2} follows from \eqref{eq_A_6_2}, \eqref{eq_A_6_2_0}, \eqref{eq_A_7}, \eqref{eq_A_8}.

The next step is to substitute the solution $u_1$ and $u_2$, given in \eqref{eq_A_4} and \eqref{eq_A_9} into the integral identity \eqref{eq_integral_identity}, multiply by $\lambda^{-\frac{(n-1)}{2}}$ and compute the limit as $\lambda \to 0$. 
In doing so, we write
\begin{equation}
\label{eq_A_11}
I :=\lambda^{-\frac{(n-1)}{2}} \int_M X(u_1)\overline{u_2} +qu_1\overline{u_2} dV_g = I_1 + I_2 +I_3 + I_4 + I_5 + I_6,
\end{equation} 
where 
\begin{align*}
& I_1 = \lambda^{-\frac{(n-1)}{2}} \int_M X(v_0) \overline{v_0} dV_g,\quad I_2 = \lambda^{-\frac{(n-1)}{2}} \int_M X(v_0) \overline{v_1} dV_g,\\
&  I_3 = \lambda^{-\frac{(n-1)}{2}} \int_M X(v_0) \overline{r_2} dV_g, \quad I_4 = \lambda^{-\frac{(n-1)}{2}} \int_M X(v_1) \overline{u_2} dV_g,\\
& I_5 = \lambda^{-\frac{(n-1)}{2}} \int_M X(r_1) \overline{u_2} dV_g,\quad I_6 = \lambda^{-\frac{(n-1)}{2}} \int_M q u_1 \overline{u_2} dV_g.
\end{align*}

Let us compute $\lim_{\lambda \to 0} I_1$. To that end, writing $X=X_j\p_{x_j}$, we have 
\begin{equation}
\label{eq_A_12}
Xv_0 = e^{\frac{i}{\lambda}(\tau' \cdot x' + i x_n) } [ \lambda^{-\frac{1}{2}} (X\eta) (\frac{x}{\lambda^{\frac{1}{2}}}) + i \lambda^{-1} X(x)\cdot (\tau', i) \eta (\frac{x}{\lambda^{\frac{1}{2}}})],
\end{equation}
and 
\begin{equation}
\label{eq_A_13}
Xv_0 \overline{v_0} = e^{-\frac{2x_n}{\lambda}}[  \lambda^{-\frac{1}{2}} (X\eta) (\frac{x}{\lambda^{\frac{1}{2}}}) \eta (\frac{x}{\lambda^{\frac{1}{2}}}) + i \lambda^{-1} X(x)\cdot (\tau', i) \eta^2 (\frac{x}{\lambda^{\frac{1}{2}}}) ]. 
\end{equation}
Making the change of variable $y'= \frac{x'}{\lambda^{1/2}}$, $y_n= \frac{x_n}{\lambda}$, using that $X \in C(M, TM)$, $ \eta$ has compact support, \eqref{eq_Laplace_boundary-nc_2} and \eqref{eq_int_eta_1}, we get 
\begin{equation}
\label{eq_A_14}
\begin{aligned}
\lim_{\lambda \to 0} &I_1 = \lim_{\lambda \to 0} \int_{\R^{n-1}} \int^\infty_{0} e^{-2y_n} \lambda^{\frac{1}{2}} (X\eta) (y',\lambda^{\frac{1}{2}}y_n) \eta(y',\lambda^{\frac{1}{2}}y_n) |g(\lambda^{\frac{1}{2}}y',\lambda y_n)|^{\frac{1}{2}} dy_n dy' \\
& + \lim_{\lambda \to 0} \int_{\R^{n-1}} \int^\infty_{0} e^{-2y_n}  iX(\lambda^{\frac{1}{2}}y',\lambda y_n) \cdot (\tau', i ) \eta^2 (y',\lambda^{\frac{1}{2}}y_n)  |g(\lambda^{\frac{1}{2}}y',\lambda y_n)|^{\frac{1}{2}} dy_n dy'\\ 
& = \frac{i}{2}X(0)\cdot (\tau', i). 
\end{aligned}
\end{equation}

The fact that $v_1\in H^1_0(M^{\text{int}})$ together with the estimates \eqref{eq_Hardy},  \eqref{eq_Brown_Salo_2_20}, \eqref{eq_9_15} gives that 
\begin{equation}
\label{eq_A_15}
 |I_2| \le \mathcal{O}( \lambda^{-\frac{(n-1)}{2}}) \|X\|_{L^\infty(M)} \| \delta dv_0\|_{L^2(M)}   \|\frac{v_1}{\delta}\|_{L^2(M)} = \mathcal{O}(\lambda^{\frac{1}{2}}). 
\end{equation}

To estimate $I_3$, first assume that $(M,g)$ is embedded in a compact smooth manifold $(N,g)$ without boundary of the same dimension. Let us extend $X\in C(M,TM)$ to a continuous vector field on $N$, and still write $X\in C(N,TN)$.   
Using a partition of unity argument together with a regularization in each coordinate patch, we see that there exists a family $X_\tau\in C^\infty(N,TN)$ such that 
\begin{equation}
\label{eq_A_15_reg}
\|X-X_\tau\|_{L^\infty}=o(1),\quad \|X_\tau\|_{L^\infty}=\mathcal{O}(1), \quad \|\nabla X_\tau\|_{L^\infty}=\mathcal{O}(\tau^{-1}),\quad \tau\to 0.
\end{equation}
We write 
\begin{equation}
\label{eq_A_15_reg_1}
I_3=I_{3,1}+I_{3,2},
\end{equation}
where
\begin{equation}
\label{eq_A_15_reg_2}
I_{3,1} = \lambda^{-\frac{(n-1)}{2}} \int_M (X-X_\tau)(v_0) \overline{r_2} dV_g,\quad I_{3,2} = \lambda^{-\frac{(n-1)}{2}} \int_M X_\tau(v_0) \overline{r_2} dV_g.
\end{equation}
Using \eqref{eq_A_15_reg}, \eqref{eq_Brown_Salo_2_18_new_mnfld}, \eqref{eq_A_r2}, we get 
\begin{equation}
\label{eq_A_15_reg_3}
|I_{3,1}|\le \mathcal{O}(\lambda^{-\frac{(n-1)}{2}})\|X-X_\tau\|_{L^\infty(M)}\|dv_0\|_{L^2(M)}\|r_2\|_{L^2(M)}=o(1),
\end{equation}
as $\tau\to 0$. To estimate $I_{3,2}$, integrating by parts, we obtain that 
\begin{equation}
\label{eq_A_15_reg_4}
I_{3,2} = J_{1}+J_{2}+J_3, 
\end{equation}
where 
\begin{equation}
\label{eq_A_15_reg_5}
\begin{aligned}
J_{1}=-\lambda^{-\frac{(n-1)}{2}} \int_M v_0 &X_\tau(\overline{r_2}) dV_g,\quad J_{2}=-\lambda^{-\frac{(n-1)}{2}} \int_M  \div(X_\tau)v_0\overline{r_2}dV_g, \\
&J_{3}=\lambda^{-\frac{(n-1)}{2}}\int_{\p M} (\nu\cdot X_\tau)v_0\overline{r_2}dS_g.
\end{aligned}
\end{equation}
Using \eqref{eq_A_15_reg}, \eqref{eq_A_r2}, \eqref{eq_9_3}, we get 
\begin{equation}
\label{eq_A_15_reg_6}
\begin{aligned}
&|J_{1}|\le \mathcal{O}(\lambda^{-\frac{(n-1)}{2}})\|X_\tau\|_{L^\infty(M)}\|v_0\|_{L^2(M)}\|dr_2\|_{L^2(M)}=\mathcal{O}(\lambda),\\
&|J_2|\le  \mathcal{O}(\lambda^{-\frac{(n-1)}{2}})\|\div X_\tau\|_{L^\infty(M)}\|v_0\|_{L^2(M)}\|r_2\|_{L^2(M)}=\mathcal{O}(\tau^{-1}\lambda).
\end{aligned}
\end{equation}
Using \eqref{eq_Brown_Salo_2_20_boundary}, \eqref{eq_A_15_reg}, \eqref{eq_A_r2}, and the trace theorem, we obtain that 
\begin{equation}
\label{eq_A_15_reg_7}
|J_3|\le  \mathcal{O}(\lambda^{-\frac{(n-1)}{2}})\|\nu\cdot X_\tau\|_{L^\infty(M)}\|v_0\|_{L^2(\p M)}\|r_2\|_{H^1(M)}=\mathcal{O}(\lambda^{1/2}).
\end{equation}
Choosing $\tau=\lambda^{1/2}$, we conclude from \eqref{eq_A_15_reg_1}, \eqref{eq_A_15_reg_2}, \eqref{eq_A_15_reg_3}, \eqref{eq_A_15_reg_4}, \eqref{eq_A_15_reg_5}, \eqref{eq_A_15_reg_6}, \eqref{eq_A_15_reg_7} that 
\begin{equation}
\label{eq_A_16}
 |I_3| =o(1), \quad \lambda\to 0. 
\end{equation}

Now \eqref{eq_9_3}, \eqref{eq_9_4}, \eqref{eq_A_r2} imply that 
\begin{equation}
\label{eq_A_17_0}
\|u_2\|_{L^2}=\mathcal{O}(\lambda^{\frac{n-1}{4}+\frac{1}{2}}). 
\end{equation}
Using \eqref{eq_A_17_0} together with  \eqref{eq_9_15}, we have 
\begin{equation}
\label{eq_A_17}
 |I_4| \le  \mathcal{O}(\lambda^{-\frac{(n-1)}{2}})  \|dv_1\|_{L^2(M)} \|u_2 \|_{L^2(M)}  = \mathcal{O} (\lambda^{\frac{1}{2}}). 
\end{equation}
 
Using \eqref{eq_A_17_0} together with \eqref{eq_A_r1}, we get
\begin{equation}
\label{eq_A_18}
 |I_5| \le \mathcal{ O}(\lambda^{-\frac{(n-1)}{2}})   \|dr_1\|_{L^2(M)} \|u_2 \|_{L^2(M)}  = \mathcal{O} (\lambda). 
\end{equation}

Last let us estimate $| I_6 |$. Using \eqref{eq_A_17_0} and similar bound for $u_1$, we see that 
\begin{equation}
\label{eq_A_19}
 |I_6| \le  \mathcal{O}(\lambda^{-\frac{(n-1)}{2}} )  \| q \|_{L^\infty(M)}\|u_1 \|_{L^2(M)}\|u_2 \|_{L^2(M)}= \mathcal{O}(\lambda). 
\end{equation}

Now it follows from \eqref{eq_A_11}, \eqref{eq_A_14}, \eqref{eq_A_15}, \eqref{eq_A_16}, \eqref{eq_A_17}, \eqref{eq_A_18} and \eqref{eq_A_19} that 
\[
\lim_{\lambda \to 0} I = \frac{i}{2}X(0)\cdot (\tau', i)=0,
\]
and therefore, 
\[
X^{(1)}(0)\cdot (\tau', i) = X^{(2)}(0)\cdot (\tau', i),
\]
for all $\tau' \in \R^{n-1}$. This completes the proof of Proposition \ref{boundary determination_X}.
\end{proof}
\end{appendix}

\section*{Acknowledgements}
The author would like to thank Katya Krupchyk for her support and guidance. The research is partially supported by the National Science Foundation (DMS 1815922).

\end{document}